\renewcommand{\email}[2][]{%
	\ifx\emails\@empty\relax\else{\g@addto@macro\emails{,\space}}\fi%
	\@ifnotempty{#1}{\g@addto@macro\emails{\textrm{(#1)}\space}}%
	\g@addto@macro\emails{#2}%
}
\newtheorem{theorem}{Theorem}[section]
\newtheorem{corollary}[theorem]{Corollary}
\newtheorem{lemma}[theorem]{Lemma}
\theoremstyle{definition}
\newtheorem{remark}[theorem]{Remark}
\numberwithin{equation}{section}
\newcommand{\R}{{\mathbb R}}
\begin{document}
\title[Nonlinear Schr\"odinger equation with critical combined powers nonlinearity]
{Asymptotic profiles for a nonlinear Schr\"odinger equation with critical combined powers nonlinearity}
\author{Shiwang Ma}\email{shiwangm@nankai.edu.cn}
\address{School of Mathematical Sciences and LPMC, Nankai University\\ 
	Tianjin 300071, China}
%https://orcid.org/0000-0003-2755-2967
\author{Vitaly Moroz}\email{v.moroz@swansea.ac.uk}
\address{Department of Mathematics, Swansea University\\ 
	Fabian Way, Swansea SA1 8EN, 	Wales, UK}
%https://orcid.org/0000-0003-3302-8782

\keywords{Nonlinear Schr\"{o}dinger equation; critical Sobolev exponent; concentration compactness; asymptotic behaviour.}

\subjclass[2010]{Primary 35J60; Secondary 35B25, 35B40.}

\date{\today}

\begin{abstract}
We study asymptotic behaviour of positive ground state solutions of the nonlinear Schr\"odinger equation
\[
-\Delta u+u=u^{2^*-1}+\lambda u^{q-1} 
\quad {\rm in} \ \mathbb R^N,
 \eqno{(P_\lambda)}
\]
where $N\ge 3$ is an integer, $2^*=\frac{2N}{N-2}$ is the Sobolev critical exponent,  $2<q<2^*$ and 
$\lambda>0$ is a parameter. It is known that as $\lambda\to 0$, after {\em a rescaling} the ground state solutions of $(P_\lambda)$ converge to a particular solution of the critical Emden-Fowler equation $-\Delta u=u^{2^*-1}$. We establish a novel sharp asymptotic characterisation of such a rescaling, which depends in a non-trivial way on the space dimension $N=3$, $N=4$ or $N\ge 5$. We also discuss a connection of these results with a mass constrained problem associated to $(P_\lambda)$.
Unlike previous work of this type, our method is based on the Nehari-Poho\v zaev manifold minimization, which allows to control the $L^2$--norm of the groundstates. 
\end{abstract}

\maketitle
%\tableofcontents

\section{Introduction and notations}

We study standing--wave solutions of the nonlinear Schr\"odinger equation with attractive double--power nonlinearity
\begin{equation}\label{eNLS}
i\psi_t=\Delta \psi+|\psi|^{q-2}\psi+|\psi|^{p-2}\psi\quad\text{in $\R^N\times\R$}
\end{equation}
where $N\ge 3$ is an integer and $2<q<p$.
A theory of NLS with combined power nonlinearities was developed by Tao, Visan and Zhang \cite{Tao} and attracted a lot of attention during the past decade (cf. \cite{Akahori-3,Akahori-2,Coles} and further references therein). 

A standing--wave solutions of \eqref{eNLS} with a frequency $\omega>0$ is a finite energy solution in the form
$$\psi(t,x)=e^{-i\omega t}Q(x).$$
After a rescaling
$$Q(x)=\omega^\frac{1}{p-2}u(\sqrt{\omega}x),$$
we obtain the equation for $u$ in the form
\begin{equation}\label{eNLS+}
-\Delta u+ u=|u|^{p-2}u+\lambda |u|^{q-2}u\quad\text{in $\R^N$},
\end{equation}
where $\lambda=\omega^{-\frac{p-q}{p-2}}>0$. 

When $p\le 2^*$, where $2^*=\frac{2N}{N-2}$ is the Sobolev critical exponent,
weak solutions of \eqref{eNLS+} correspond to critical points of the associated energy functional $I_\lambda:  H^1(\mathbb  R^N)\to \mathbb R$, defined by
$$
I_\lambda(u):=\frac{1}{2}\int_{\mathbb R^N}\left(|\nabla u|^2+|u|^2\right)-\frac{1}{p}\int_{\mathbb R^N}|u|^p-\frac{\lambda}{q}\int_{\mathbb R^N}|u|^q.
$$
By a ground state solution of \eqref{eNLS+} we understand a solution  $u_\lambda\in H^1(\mathbb  R^N)$ such that $I_\lambda(u_\lambda)\le I_\lambda(u)$ for every nontrivial solution $u$ of \eqref{eNLS+}.

In the subcritical case $p<2^*$, the existence of a positive radially symmetric exponentially decaying ground state solution of \eqref{eNLS+} is the result of Berestycki and Lions \cite{Berestycki-1}. If $2^*\le q<p$ there are no finite energy solutions of \eqref{eNLS+}, which follows from Poh\v zaev identity.

In this paper we are interested in the critical case $p=2^*$.
We study the problem 
\[
-\Delta u+u=u^{2^*-1}+\lambda u^{q-1},\qquad\text{$u>0$ in $\R^N$,}\eqno(P_\lambda)
\]
where $q\in (2,2^*)$ and $\lambda>0$ is a parameter. 
The following result gives a characterisation of the existence of ground states for $(P_\lambda)$. 

\begin{theorem}\label{t0}
Problem $(P_\lambda)$ admits a positive radially symmetric exponentially decreasing ground state solution $u_\lambda\in H^1(\mathbb R^N)\cap C^2(\R^N)$ provided that: 
\begin{itemize}
\item $N\ge 4$, $q\in(2,2^*)$ and $\lambda>0$;
\item $N=3$, $q\in (4,6)$ and $\lambda >0$;
\item $N=3$ and $q\in (2,4]$ and $\lambda$ is sufficiently large.
\end{itemize}
\end{theorem}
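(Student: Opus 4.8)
The plan is to obtain the ground state via a constrained minimization and then upgrade the minimizer to a genuine ground state by a standard comparison argument. The natural variational framework here is the Nehari--Pohozaev manifold
\[
\mathcal M_\lambda:=\Bigl\{u\in H^1(\R^N)\setminus\{0\}: J_\lambda(u)=0\Bigr\},
\]
where $J_\lambda(u)$ is the linear combination of the Nehari identity $\langle I_\lambda'(u),u\rangle=0$ and the Pohozaev identity chosen so that the $L^{2^*}$--term drops out; concretely one takes a combination whose $u$--fibre map $t\mapsto J_\lambda(t u)$ (or rather the two--parameter scaling $u\mapsto t^a u(t^b\cdot)$ adapted to $(P_\lambda)$) has a unique positive critical point that is a strict maximum of $I_\lambda$ along the fibre. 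The first step is therefore to verify that $\mathcal M_\lambda$ is a natural constraint: every $u\in H^1\setminus\{0\}$ has a unique projection onto $\mathcal M_\lambda$, $I_\lambda$ restricted to $\mathcal M_\lambda$ is bounded below by a positive constant, and critical points of $I_\lambda|_{\mathcal M_\lambda}$ are critical points of $I_\lambda$. I would set $c_\lambda:=\inf_{\mathcal M_\lambda}I_\lambda$ and note, using the Emden--Fowler ground state $U$ (the Aubin--Talenti instanton), that $c_\lambda<\frac1N S^{N/2}$ for the relevant ranges of $(N,q,\lambda)$ — this is the crucial strict inequality that defeats the lack of compactness at the critical Sobolev exponent.

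Next I would run concentration--compactness (or equivalently a careful Brezis--Nirenberg type analysis) on a minimizing sequence $(u_n)\subset\mathcal M_\lambda$ for $c_\lambda$. Working with radially symmetric decreasing rearrangements I may assume $u_n$ radial, so the embedding $H^1_{rad}(\R^N)\hookrightarrow L^q(\R^N)$ for $2<q<2^*$ is compact; the only obstruction is the critical term. Writing $u_n\rightharpoonup u$ weakly, the Brezis--Lieb lemma splits the energy and the constraint, and the strict bound $c_\lambda<\frac1N S^{N/2}$ forces the critical ``bubble'' mass to vanish, hence $u_n\to u$ strongly in $D^{1,2}$ and, together with the compact $L^q$ piece, strongly in $H^1$. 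Thus $u\in\mathcal M_\lambda$ realizes $c_\lambda$. By the Lagrange multiplier rule on the natural constraint $\mathcal M_\lambda$, $u$ solves $(P_\lambda)$ up to a positive multiple; replacing $u$ by $|u|$ (rearrangement does not increase $I_\lambda$ and preserves the constraint) and invoking the strong maximum principle gives positivity, while standard elliptic regularity ($u\in C^2$) and the Gidas--Ni--Nirenberg / decay estimates give radial symmetry and exponential decay. That $u$ is a ground state, i.e. $I_\lambda(u)\le I_\lambda(v)$ for every nontrivial solution $v$, follows because any such $v$ lies on $\mathcal M_\lambda$ (it satisfies both Nehari and Pohozaev), so $I_\lambda(v)\ge c_\lambda=I_\lambda(u)$.

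The delicate point — and the place where the three dimensional ranges in the statement come from — is establishing the strict inequality $c_\lambda<\frac1N S^{N/2}$. The standard device is to test $I_\lambda$ (after projecting onto $\mathcal M_\lambda$) with a truncated, rescaled Aubin--Talenti bubble $U_\eps(x)=\eps^{-(N-2)/2}U(x/\eps)$ and expand as $\eps\to 0$. The gain from the subcritical perturbation $\lambda u^q$ is of order $\eps^{N-\frac{(N-2)q}{2}}$ when $q>\frac{N}{N-2}$ (so that $U_\eps\in L^q$ with the right rate), or of order $\eps^{(N-2)/2}\cdot(\text{log correction if }q=\frac{N}{N-2})$-type behaviour otherwise, while the loss coming from the mass term $\int u^2$ is of order $\eps^2$ for $N\ge5$, $\eps^2|\log\eps|$ for $N=4$, and $\eps$ for $N=3$. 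Comparing the two exponents: for $N\ge4$ the subcritical gain always beats the mass loss for every $q\in(2,2^*)$ and every $\lambda>0$; for $N=3$ the mass loss is $O(\eps)$, which is beaten by the gain $O(\eps^{3-q/2})$ precisely when $3-\tfrac q2<1$, i.e. $q>4$, giving the range $q\in(4,6)$ for all $\lambda>0$; when $q\in(2,4]$ the two contributions are of the same order (or the loss dominates), and one needs the coefficient of the gain, which is proportional to $\lambda$, to be large enough to overcome the $O(\eps)$ loss, which forces $\lambda$ large. This expansion — keeping track of the exact constants and the competing powers of $\eps$ — is the main technical obstacle; everything else is, modulo care with the two--parameter fibering of $\mathcal M_\lambda$, routine.
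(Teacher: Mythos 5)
The paper itself does not prove Theorem \ref{t0}: it is quoted from the literature (Akahori--Ibrahim--Kikuchi--Nawa, Alves--Souto--Montenegro, Liu--Liao--Tang for $N\ge4$ and for $N=3$ with $\lambda$ large; Zhang--Zou for $N=3$, $q\in(4,6)$), so your sketch should be measured against those proofs, whose general scheme (a minimax/Nehari level strictly below $\tfrac1N S^{N/2}$, radial compactness for the subcritical term, Brezis--Lieb splitting for the critical one) you have correctly identified. Two of your steps, however, fail as written. The constraint you propose is degenerate: in both the Nehari identity and the Pohozaev identity the coefficients of $\int|\nabla u|^2$ and of $\int|u|^{2^*}$ coincide, so any linear combination that cancels the critical term cancels the Dirichlet term as well, and what is left is only the relation $\|u\|_2^2=\tfrac{2(2^*-q)}{q(2^*-2)}\lambda\|u\|_q^q$ (this is \eqref{e28} in unrescaled variables). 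On that set $I_\lambda$ is unbounded below --- the dilations $u(t\cdot)$ remain on it and, using the relation, the coefficient of $t^{-N}$ in $I_\lambda(u(t\cdot))$ is negative, so $I_\lambda(u(t\cdot))\to-\infty$ as $t\to0^+$ --- hence there is no minimization there and no ``unique projection onto the fibre''. The repair is standard: minimize over the plain Nehari manifold $\mathcal M_\lambda$ (every nontrivial solution lies on it, and $t\mapsto I_\lambda(tu)$ has a unique maximum because both nonlinearities are superquadratic), and use the Pohozaev identity only as an identity satisfied by solutions, as the paper does in its asymptotic analysis.

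Second, the case $N=3$, $q\in(2,4)$, $\lambda$ large cannot be reached by your $\eps\to0$ bubble expansion: there the mass loss is of order $\eps$ while the gain is of order $\lambda\eps^{3-q/2}$ (or $\lambda\eps^{q/2}$, with a logarithm at $q=3$), and these exponents are strictly larger than $1$ for $q<4$; so for every fixed $\lambda$ the loss dominates as $\eps\to0$, and no largeness of the coefficient can overcome a term of higher order. (Your coefficient argument is legitimate only at $q=4$, where the orders match.) The standard way to obtain the strict inequality for large $\lambda$ is different and simpler: fix any nontrivial radial $\varphi$ and note that $\sup_{t>0}I_\lambda(t\varphi)\to0$ as $\lambda\to\infty$, since the maximizing $t$ tends to $0$; hence $c_\lambda<\tfrac13 S^{3/2}$ once $\lambda$ is large, for every $q\in(2,2^*)$. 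With these two corrections (plain Nehari manifold; large-$\lambda$ smallness of the level rather than the bubble expansion when $N=3$, $q\le4$), your outline --- bubble expansion for $N\ge4$, $q\in(2,2^*)$ and for $N=3$, $q\in(4,6)$; then radial concentration--compactness below $\tfrac1N S^{N/2}$, maximum principle, symmetry, decay and regularity --- is indeed the argument of the cited references.
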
  

For $N\ge 4$, Theorem \ref{t0} is established by Akahori, Ibrahim, Kikuchi and Nawa \cite{Akahori}, Alves, Souto and Montenegro \cite{Alves-1} and Liu, Liao and Tang \cite{Liu-1}. In the case $N=3$, Theorem \ref{t0} is proved in the above mentioned papers for $q\in (2,6)$ and large $\lambda>0$. 
Theorem \ref{t0} for $N=3$, $q\in (4,6)$ and every $\lambda>0$ was proved in Zhang and Zou \cite[Theorem 1.1]{Zhang-1} 
(see  also Li and Ma \cite{Li-1} or Akahori et al. \cite[Proposition 1.1]{Akahori-2}).

Very recently, Akahori, Ibrahim, Kikuchi and Nawa \cite{Akahori-22}, and Wei and Wu \cite{Wei-2} refined the results concerning the existence and non-existence of ground states to $(P_\lambda)$ when $N=3$. Although their definition of the ground state is different from that in our paper, they established the existence of a $\lambda_*>0$ such that $(P_\lambda)$ has a ground state if $\lambda>\lambda_*$ and no ground state if $\lambda<\lambda_*$ when $N = 3$ and $q \in (2, 4]$. Moreover, when $N=3$ and $\lambda=\lambda_*$, $(P_\lambda)$ has a ground state if $q\in(2,4)$. 

Concerning the uniqueness, Akahori et al. \cite{Akahori-2,Akahori-2+,Akahori-3} and Coles and Gustafson \cite{Coles} proved that the radial ground state $u_\lambda$ is unique and nondegenerate for all small $\lambda>0$ when $N\ge 5$ and $q\in(2,2^*)$ \cite[Theorem 1.1]{Akahori-2} or $N=3$ and $q\in(4,2^*)$ \cite{Coles}, \cite[Theorem 1.1]{Akahori-2+}; and for all large $\lambda$ when $N\ge 3$ and $2+4/N<q<2^*$ \cite[Proposition 2.4]{Akahori-3}.
Very recently, Akahori and Murata  \cite{Akahori-22-NoDEA,Akahori-22-arxiv} established the uniqueness and nondegeneracy of the ground state solutions for small $\lambda>0$ in the case $N = 4$.

In general, the uniqueness of positive radial solutions of $(P_\lambda)$ is not expected. D\'avila, del Pino and Guerra \cite{Davila} constructed multiple positive solutions of \eqref{eNLS+} for a sufficiently large $\lambda$ and slightly subcritical $p<2^*$. A numerical simulation in the same paper suggested nonuniqueness in the critical case $p=2^*$. Wei and Wu \cite{Wei-2} recently proved that there exist two positive solutions to $(P_\lambda)$ when $N = 3$, $q \in (2, 4)$ and $\lambda > 0$ is sufficiently large, as \cite{Davila} has  suggested. Chen, D\'avila and Guerra \cite{Davila-2} proved the existence of arbitrary large number of bubble tower positive solutions of  \eqref{eNLS+} in the slightly supercritical case when $q<2^*<p=2^*+\varepsilon$, provided that $\varepsilon>0$ is sufficiently small.
However, if $3\le N\le 6$ and $\frac{N+2}{N-2}<q<2^*$ then Pucci and Serrin \cite[Theorem 1]{Pucci} proved that $(P_\lambda)$ has at most one positive radial solution (see also \cite[Theorem C.1]{Akahori}).

Existence of a positive radial solution to \eqref{eNLS+} in the supercritical case $2<q<2^*\le p$ for sufficiently large $\lambda$ was established earlier by Ferrero and Gazzola \cite[Theorem 5]{Ferrero} using ODE's methods, however the variational characterisation of these solutions seems open. They also proved that for $2<q<2^*<p$ and small $\lambda>0$ equation \eqref{eNLS+} has no positive solutions.

\smallskip

Before we formulate the result in this paper we shall clarify the notations.
\smallskip

\paragraph{\bf Notations.} 
Throughout the paper, we assume $N\geq 3$. 
The standard norm on the Lebesgue space $L^p(\R^N)$ is denoted by $\|\cdot\|_p$. 
The space $H^1(\R^{N})$ is the usual Sobolev space with the norm $\|u\|_{H^1(\mathbb{R}^N)}=\|\nabla u\|_2+\|u\|_2$, while
$H_r^1(\R^N)=\{u\in H^1(\R^N): u\;\text{is radially  symmetric}\}$.
The homogeneous Sobolev space $D^1(\mathbb{R}^{N})$ is defined as the completion of $C^\infty_c(\R^N)$ with respect to the norm $\|\nabla u\|_2$.  

For any  small $\lambda>0$, any $q\in (2,2^*)$, and two nonnegative functions $f(\lambda, q)$ and  $g(\lambda, q)$, 
throughout the paper we write:

\begin{itemize}
	\item
	$f(\lambda,q)\lesssim g(\lambda,q)$ or $g(\lambda,q)\gtrsim f(\lambda,q)$ if there exists a positive constant $C$ independent of $\lambda$ and $q$ such that $f(\lambda,q)\le Cg(\lambda,q)$,
	\item
	$f(\lambda,q)\sim g(\lambda,q)$ if $f(\lambda,q)\lesssim g(\lambda,q)$ and $f(\lambda,q)\gtrsim g(\lambda,q)$.
\end{itemize}
$B_R$ denotes the open ball in $\R^N$ with radius $R>0$ and centred at the origin,  $|B_R|$ and $B_R^c$ denote its Lebesgue measure and its complement in $\R^N$, respectively.   
As usual, $c,c_1$ etc., denote positive constants which are independent of $\lambda$ and whose exact values are irrelevant.

\section{Main result}
In this paper we are interested in the limit asymptotic profile of the ground states $u_\lambda$ of the critical problem $(P_\lambda)$, and in the asymptotic behaviour of different norms of $u_\lambda$, as $\lambda\to 0$ and $\lambda\to\infty$.
Of particular importance is the $L^2$--mass of the ground state
$$M(\lambda):=\|u_\lambda\|_2^2,$$
which plays a key role in the analysis of stability of the corresponding standing--wave solution of the time--dependent NLS \eqref{eNLS}, and in the study of the mass constrained problems associated to $(P_\lambda)$,
cf.~Lewin and Nodari \cite[Section 3.2]{Lewin-1} and Section \ref{s-mass} below for a discussion.

In the subcritical case $p<2^*$, it is intuitively clear and not difficult to show (using e.g. Lyapunov--Schmidt type arguments) that as $\lambda\to 0$, ground states of \eqref{eNLS+} converge to the unique radial positive ground state of the limit equation
\begin{equation}\label{eNLS+0}
-\Delta u+ u=|u|^{p-2}u\quad\text{in $\R^N$}.
\end{equation}
In the critical case $p=2^*$, by Poho\v zaev identity, the {\em formal} limit equation \eqref{eNLS+0} has no nontrvial finite energy solutions. In fact, we will see later that $u_\lambda$ converges as $\lambda\to 0$ to a multiple of the delta-function at the origin.

Recently Akahori et al.~\cite[Proposition 2.1]{Akahori-2} proved that {\em after a rescaling}, the correct limit equation for $(P_\lambda)$ as $\lambda\to 0$ is given by the critical Emden-Fowler equation
\begin{equation}\label{e12}
-\Delta U=U^{2^*-1} \quad\text{in $\R^N$}.
\end{equation}
Recall that all radial solutions of \eqref{e12} are given by the Talenti function 
\begin{equation}\label{e13}
U_1(x):=[N(N-2)]^{\frac{N-2}{4}}\left(\frac{1}{1+|x|^2}\right)^{\frac{N-2}{2}}
\end{equation}
and the family of its rescalings 
\begin{equation}\label{e14}
U_\rho(x):=\rho^{-\frac{N-2}{2}}U_1(x/\rho),  \quad \rho>0.
\end{equation}
Note that while $(P_\lambda)$ and the associated energy $I_\lambda$ are well--posed in $H^1(\R^N)$, the limit critical Emden-Fowler equation \eqref{e12} is well--posed in $D^1(\R^N)\not\subset H^1(\R^N)$. Moreover, in the dimensions $N=3,4$ the ground states $U_\rho\not\in H^1(\R^N)$, so small perturbation arguments are not (easily) available for the study of limit behaviour of $u_\lambda$.

Akahori et al.~\cite[Proposition 2.1]{Akahori-2} proved, using variational methods, that the rescaled family of ground state solutions of $(P_\lambda)$, defined as
\begin{equation}\label{resc-imp}
\tilde{u}_\lambda(x):=\mu_\lambda^{-1}u_\lambda\big(\mu_\lambda^{-\frac2{N-2}}x\big),\qquad\mu_\lambda:=u_\lambda(0)=\|u_\lambda\|_\infty
\end{equation}
converges as $\lambda\to 0$ in $D^1(\R^N)$ to the $U_{\rho_*}$, where $\|U_{\rho_*}\|_\infty=1$.
This result was used in the proof of the uniqueness and nondegenaracy of the ground states of $(P_\lambda)$ for $N\ge 5$ in \cite{Akahori-2}, and for $N=3$ in \cite{Akahori-2+}. 
%It is pointed out in \cite{Akahori-2+} that the uniqueness and nondegenaracy in the case $N=4$ is currently open. 
Very recently, Akahori and Murata  \cite{Akahori-22-NoDEA,Akahori-22-arxiv} obtained the uniqueness and nondegeneracy of the ground state solutions in the case $N = 4$.
The rescaling $\mu_\lambda$ in \eqref{resc-imp} is implicit.

Our main result in this work is an explicit asymptotic characterisation of a rescaling which ensures the convergence of ground states of $(P_\lambda)$ to a ground state of the critical Emden--Fowler equation \eqref{e12}. More precisely, we prove the following.

\begin{theorem}\label{t1}
%Let $N\ge 4$ and $2<q<2^*$ or	$N=3$ and $4<q<6$.
Let $\{u_\lambda\}$ be a family of ground states of $(P_\lambda)$. 
\begin{itemize}
\item[$(a)$]
If $N\ge 5$ and $q\in(2,2^*)$, then for small $\lambda>0$
\begin{equation}
u_\lambda(0)\sim \lambda^{-\frac{1}{q-2}},
\end{equation}
\begin{equation}
\|\nabla u_\lambda\|^2_2\sim \|u_\lambda\|_{2^*}^{2^*}\sim 1, \quad \|u_\lambda\|_2^2\sim (2^*-q)\lambda^{\frac{2^*-2}{q-2}},  \quad 
\|u_\lambda\|^q_q\sim \lambda^{\frac{2^*-q}{q-2}}.
\end{equation}
Moreover, as $\lambda\to 0$, the rescaled family of ground states 
\begin{equation}\label{e15}
v_\lambda(x)=\lambda^\frac{1}{q-2}u_\lambda\big(\lambda^\frac{2^*-2}{2(q-2)}x\big), 
\end{equation}
%satisfies 
%\begin{equation}
%\|\nabla v_\lambda\|^2_2\sim \|v_\lambda\|_{2^*}^{2^*} \sim\|v_\lambda\|^q_q\sim 1,\  \|v_\lambda\|_2^2\sim 2^*-q,
%\end{equation}
%and as $\lambda\to 0$, $v_\lambda$ 
converges  to $U_{\rho_0}$ in $H^1(\mathbb R^N)$ with 
\begin{equation}
\rho_0=\left(\frac{2(2^*-q)\int_{\mathbb R^N}|U_1|^q}{q(2^*-2)\int_{\mathbb R^N}|U_1|^2}\right)^{\frac{2^*-2}{2(q-2)}},
\end{equation}
and the convergence rate is described by the relation
\begin{equation}\label{e-conv5}
\|\nabla U_{\rho_0}\|_2^2-\|\nabla v_\lambda\|_2^2\sim (q-2)\lambda^{\frac{2^*-2}{q-2}}.
\end{equation}
\smallskip

\item[$(b)$]
If $N=4$ and $q\in (2,4)$ or $N=3$ and $q\in (4,6)$, then for small $\lambda>0$
\begin{equation}\label{2-11}
u_\lambda(0)\sim\left\{
\begin{array}{lcl}
\lambda^{-\frac{N-2}{2(q-2)}}(\ln\frac{1}{\lambda})^{\frac{N-2}{2(q-2)}}&\text{if}&N=4,\smallskip\\
\lambda^{-\frac{N-2}{q-4}}&\text{if}&N=3,
\end{array}\right.
\end{equation}
\begin{equation}
\|\nabla u_\lambda\|^2_2\sim \|u_\lambda\|_{2^*}^{2^*} \sim 1,
\end{equation}
\begin{equation}
\|u_\lambda\|_2^2\sim\left\{
\begin{array}{lcl}
\lambda^{\frac{2}{q-2}}(\ln\frac{1}{\lambda})^{-\frac{4-q}{q-2}}&\text{if}&N=4,\smallskip\\
\lambda^{\frac{2}{q-4}}&\text{if}&N=3,
\end{array}\right.
\end{equation}
\begin{equation}
\|u_\lambda\|_q^q\sim\left\{
\begin{array}{lcl}
\lambda^{\frac{4-q}{q-2}}(\ln\frac{1}{\lambda})^{-\frac{4-q}{q-2}}&\text{if}&N=4,\smallskip\\
\lambda^{\frac{6-q}{q-4}}&\text{if}&N=3.
\end{array}\right.
\end{equation}
Moreover,  there exists $\xi_\lambda\in (0,+\infty)$ verifying  
\begin{equation}\label{e215}
\xi_\lambda\sim\left\{
\begin{array}{lcl}
\lambda^{\frac{1}{q-2}}(\ln\frac{1}{\lambda})^{-\frac{1}{q-2}}&\text{if}&N=4,\smallskip\\
\lambda^{\frac{2}{q-4}}&\text{if}&N=3,
\end{array}\right.
\end{equation}
such that as  $\lambda\to 0$, the rescaled family of ground states
\begin{equation}\label{e16}
w_\lambda(x)=\xi_\lambda^{\frac{N-2}{2}}u_\lambda(\xi_\lambda x),
\end{equation}
%satisfies 
%\begin{equation}
%\|\nabla w_\lambda\|^2_2\sim \|w_\lambda\|_{2^*}^{2^*}\sim \|w_\lambda\|_q^q\sim 1, \quad 
%\  \|w_\lambda\|_2^2\sim \left\{
%\begin{array}{lcl}
%\ln\frac{1}{\lambda}&\text{if}&N=4,\smallskip\\
%\lambda^{-\frac{2}{q-4}}&\text{if}&N=3,
%\end{array}\right.
%\end{equation}
%and as $\lambda\to 0$, $w_\lambda$ 
converges  to $U_1$ in $D^{1}(\mathbb R^N)\cap L^q(\mathbb R^N)$ , and the convergence rate is described by the relation
\begin{equation}\label{e-conv34}
\|\nabla U_1\|_2^2-\|\nabla w_\lambda\|_2^2\sim \left\{
\begin{array}{lcl}
 \lambda^{\frac{2}{q-2}}\left(\ln \frac{1}{\lambda}\right)^{-\frac{4-q}{q-2}} &\text{if}&N=4,\smallskip\\
\lambda^{\frac{2}{q-4}}&\text{if}&N=3.
 \end{array}
 \right.
\end{equation}
\end{itemize}
\end{theorem}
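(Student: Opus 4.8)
The plan is to read off everything from the exact Poho\v zaev/Nehari identities satisfied by a ground state, to sharpen these into two-sided a priori bounds via the Sobolev and Gagliardo--Nirenberg inequalities, to pin the energy level by truncated Talenti test functions, and finally to feed this into the concentration profile of Akahori et al.\ \cite{Akahori-2}. First I would record the algebra. A ground state $u_\lambda$ satisfies $\|\nabla u_\lambda\|_2^2+\|u_\lambda\|_2^2=\|u_\lambda\|_{2^*}^{2^*}+\lambda\|u_\lambda\|_q^q$ and the Poho\v zaev identity; eliminating terms between them yields the \emph{Nehari--Poho\v zaev relation}
\[
\|u_\lambda\|_2^2=\frac{(N-2)(2^*-q)}{2q}\,\lambda\|u_\lambda\|_q^q=:\kappa\,\lambda\|u_\lambda\|_q^q ,\qquad \kappa<1 ,
\]
together with the energy identity $\|\nabla u_\lambda\|_2^2=N I_\lambda(u_\lambda)=Nm_\lambda$, equivalently $m_\lambda=\tfrac1N\|u_\lambda\|_{2^*}^{2^*}+\tfrac{q-2}{2q}\lambda\|u_\lambda\|_q^q$. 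These two facts are what the Nehari--Poho\v zaev manifold minimisation makes transparent (it is the mass-preserving scaling $u\mapsto t^{N/2}u(t\cdot)$ that exposes $\|u_\lambda\|_2^2$ in $m_\lambda$); from that formulation one also gets that $m_\lambda$ is attained, is bounded above and below for small $\lambda$, and satisfies $m_\lambda<\tfrac1N S^{N/2}$. Since $\|\nabla U_{\rho_0}\|_2^2=\|\nabla U_1\|_2^2=S^{N/2}$ and Dirichlet norms are scale invariant, both convergence rates \eqref{e-conv5} and \eqref{e-conv34} are just the statement that $S^{N/2}-Nm_\lambda$ has the asserted size, so the whole problem reduces to a sharp estimate of $m_\lambda$ and of $\lambda\|u_\lambda\|_q^q$.

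Next I would get the a priori bounds. From $m_\lambda$ bounded one has $\|\nabla u_\lambda\|_2^2=Nm_\lambda\sim1$; plugging a Gagliardo--Nirenberg estimate $\|u_\lambda\|_q^q\lesssim\|u_\lambda\|_2^{q(1-\theta)}\|\nabla u_\lambda\|_2^{q\theta}$ into the Nehari--Poho\v zaev relation and solving for $\|u_\lambda\|_q^q$ (the resulting exponent $1-\tfrac{q(1-\theta)}2=\tfrac{(N-2)(q-2)}4$ is positive) gives $\lambda\|u_\lambda\|_q^q\lesssim\big((2^*-q)\lambda\big)^{(2^*-q)/(q-2)}\lambda$, in particular $\lambda\|u_\lambda\|_q^q\to0$, whence $\|u_\lambda\|_{2^*}^{2^*}\to S^{N/2}$ and $m_\lambda\to\tfrac1N S^{N/2}$. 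For the defect from above, insert $\|u_\lambda\|_{2^*}^{2^*}\le S^{-2^*/2}\|\nabla u_\lambda\|_2^{2^*}$ into the Nehari identity, use $\|u_\lambda\|_2^2=\kappa\lambda\|u_\lambda\|_q^q$ with $\kappa<1$, and linearise the function $t\mapsto t-S^{-2^*/2}t^{2^*/2}$ near $t=S^{N/2}$: this yields $S^{N/2}-\|\nabla u_\lambda\|_2^2\lesssim (q-2)\,\lambda\|u_\lambda\|_q^q$. A matching lower bound for the defect comes from the sharp energy upper bound: I would test $I_\lambda$ on the Talenti bubble $U_\rho$ (cut off by a radial bump at scale $R$ when $N=3,4$, so it lies in $H^1$), project onto the Nehari--Poho\v zaev manifold, and optimise $\sup_{t>0}I_\lambda(tU_{\rho,R})$ over $\rho$ (and over $R$ for $N=3,4$), balancing the $O((\rho/R)^{N-2})$, $O(\lambda\|U_\rho\|_q^q)$ and $O(\|U_{\rho,R}\|_2^2)$ contributions; this gives $\tfrac1N S^{N/2}-m_\lambda\gtrsim(q-2)\lambda^{(2^*-2)/(q-2)}$ for $N\ge5$, and the logarithmically corrected, resp.\ $\lambda^{2/(q-4)}$, versions for $N=4$, resp.\ $N=3$. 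Combining the two bounds with the Nehari--Poho\v zaev relation pins $\lambda\|u_\lambda\|_q^q$, $\|u_\lambda\|_q^q$, $\|u_\lambda\|_2^2$ and $m_\lambda$ at their stated sizes.

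It remains to identify the rescaled limits. Here I would use the $D^1$--convergence of the $L^\infty$--normalised rescalings $\tilde u_\lambda$ to a Talenti bubble \cite[Proposition~2.1]{Akahori-2} and the scaling identity $\|u_\lambda\|_s^s=\mu_\lambda^{\,s-2^*}\|\tilde u_\lambda\|_s^s$, where $\mu_\lambda=u_\lambda(0)$. For $N\ge5$ the limit bubble lies in $H^1\cap L^q$, so uniform pointwise decay bounds for $\tilde u_\lambda$ (barrier/Moser iteration for the rescaled equation, whose lower order coefficients tend to $0$) upgrade the convergence to $H^1\cap L^q$; then $\|u_\lambda\|_2^2=\kappa\lambda\|u_\lambda\|_q^q$ becomes $\mu_\lambda^{\,2-q}=\kappa\lambda\,\|\tilde u_\lambda\|_q^q/\|\tilde u_\lambda\|_2^2$, which solves explicitly for $\mu_\lambda$ and, via the ratio $\int|U_1|^q/\int|U_1|^2$, gives $\rho_0$, $v_\lambda\to U_{\rho_0}$ in $H^1$, and the remaining norms. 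For $N=3,4$ the limit $U_1\notin L^2$, and the decisive new point is that the $L^2$--mass of the rescaled ground state diverges at a definite rate: because $u_\lambda$ still decays exponentially, the rescaled bubble is effectively truncated at distance $\sim1$, so $\|w_\lambda\|_2^2\sim\ln(1/\xi_\lambda)\sim\ln(1/\lambda)$ for $N=4$ and $\|w_\lambda\|_2^2\sim 1/\xi_\lambda$ for $N=3$, and the Nehari--Poho\v zaev relation then determines the concentration scale $\xi_\lambda$ as in \eqref{e215}, yielding \eqref{2-11}--\eqref{e-conv34}. The main obstacle is precisely this last step together with the sharp test-function optimisation: quantifying exactly how the $L^2$--mass of the concentrating ground state blows up or decays in low dimensions (this is where the manifold formulation is indispensable, since it is what controls $\|u_\lambda\|_2$), carrying out the two-parameter optimisation that produces the $N=4$ logarithm, and keeping all implied constants uniform in $q$ by isolating the explicit $(2^*-q)$ factors; the dichotomy $N=3$ vs.\ $N=4$ vs.\ $N\ge5$ originates entirely from whether $U_1$ belongs to $L^2(\mathbb R^N)$.
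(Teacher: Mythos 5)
Your plan is correct in substance and follows the same overall architecture as the paper: the Nehari--Poho\v zaev relation $\|u_\lambda\|_2^2=\tfrac{(N-2)(2^*-q)}{2q}\lambda\|u_\lambda\|_q^q$ together with $m_\lambda=\tfrac1N\|\nabla u_\lambda\|_2^2$, sharp two-sided asymptotics of $m_0-m_\lambda$ via (cut-off) Talenti bubbles optimised in $\rho$ and in the truncation radius (which is where the $N=4$ logarithm comes from, as in Lemmas \ref{l27}--\ref{l28}), and control of the $L^2$--mass of the rescaled ground states through pointwise decay, with the trichotomy governed by whether $U_1\in L^2$. Three sub-steps differ from the paper, and they are worth comparing. (i) Your upper bound on the defect via Sobolev plus concavity of $t\mapsto t-S^{-2^*/2}t^{2^*/2}$ does give $S^{N/2}-\|\nabla u_\lambda\|_2^2\lesssim(q-2)\lambda\|u_\lambda\|_q^q$, because $1-\kappa=\tfrac{N(q-2)}{2q}$; this is more elementary than the paper's dilation/minimax comparison in Lemma \ref{l26}, but it needs $\|\nabla u_\lambda\|_2^2$ bounded away from $0$ (standard on the Nehari manifold) so that you are on the branch where the concave function dominates $S^{N/2}-t$. (ii) You identify the limit profile by quoting the $D^1$--convergence of the $L^\infty$--normalised rescalings from Akahori et al.\ and upgrading it by uniform decay; the paper instead gives a self-contained concentration--compactness/PS-decomposition argument (Lemma \ref{l34}) and borrows only the uniform decay bound for the $L^q$ (and, for $N\ge5$, $L^2$) convergence. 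Your route is legitimate but less self-contained, and for $N\ge5$ the identification of $\rho_0$ still requires passing the Nehari--Poho\v zaev relation to the limit, which you use implicitly. (iii) The thinnest point is the two-sided claim $\|w_\lambda\|_2^2\sim\int_{|x|\lesssim 1/\xi_\lambda}U_1^2$ for $N=3,4$: the upper direction follows from the uniform bound $w_\lambda\lesssim(1+|x|)^{2-N}$ you invoke, but the lower direction, which is exactly what forces the upper bound on $\xi_\lambda$, needs a pointwise lower bound of the form $w_\lambda(x)\gtrsim |x|^{2-N}e^{-\eps_\lambda|x|}$ up to the exponential scale; you only gesture at this (``effectively truncated at distance $\sim1$''), while the paper proves it by a comparison-function argument (Lemma \ref{l37}, after Moroz--Muratov). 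With that lemma supplied, your scheme closes: the Nehari--Poho\v zaev relation in the $w$--variables together with $\|w_\lambda\|_q^q\sim1$ and the mass asymptotics yields the stated rate for $\xi_\lambda$, and combining the two defect bounds with $\lambda\|u_\lambda\|_q^q\sim\lambda\,\xi_\lambda^{\,N-q(N-2)/2}$ reproduces \eqref{e-conv5} and \eqref{e-conv34}.
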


Similar type of results were recently obtained by Wei and Wu \cite{Wei-1,Wei-2}. In \cite{Wei-2} the authors study solutions of $(P_\lambda)$ in the case $N=3$ and $q\in(2,4)$. In particular, \cite[Theorem 1.2 and Propostion 2.4]{Wei-2} proves that for sufficiently large $\mu$ there exist a ground state and a {\em blow-up} positive radial solution of $(P_\lambda)$, and derives asymptotic estimates of type \eqref{2-11} on these two solutions. These results complement Theorem \ref{t1} above. 
In \cite{Wei-1} the authors study normalised solutions of $(P_\lambda)$ for $N\ge 3$ and general range $q\in(2,2^*)$. In \cite[Theorem 1.2 and Propostion 2.4]{Wei-1} they show convergence up to a rescaling of the mountain--pass type normalised solution of $(P_\lambda)$ with a fixed mass to a normalised solution of the Emden--Fowler equation \eqref{e12} and derive asymptotic estimates of the rescaling similar to the results in Theorem \ref{t1}. It is not known in general (cf. Section 2) whether or not normalised solutions in \cite{Wei-1} are (rescalings of) ground states in Theorem \ref{t1}. In fact, comparison of estimates in \cite{Wei-1} and Theorem \ref{t1} could potentially help to study this question. The techniques in our work and in \cite{Wei-1,Wei-2} are different.

Asymptotic characterisation of ground states of the equation with a double--well nonlinearity in the form
\begin{equation}\label{eNLS+-}
-\Delta u+ \omega u=|u|^{p-2}u-|u|^{q-2}u\quad\text{in $\R^N$},
\end{equation}
with $\omega>0$ and $2<q<p<+\infty$ was obtained by Moroz and Muratov \cite{Moroz-1}, and by Lewin and Nodari \cite{Lewin-1}.
Our proof of Theorem \ref{t1} is inspired by \cite{Moroz-1} yet the techniques in the present work are different.
While the arguments in \cite{Moroz-1} are based on the Berestycki--Lions variational approach \cite{Berestycki-1}, the proofs in this work use minimization over Nehari manifold combined with Pohozaev's identity estimates, and the Concentration Compactness Principle. The advantage of the Nehari--Poho\v zaev approach is that it allows to include the control the $L^2$--norm of the ground states, which is essential in the study of the mass constrained problems associated to $(P_\lambda)$. Our method could be extended to nonlinear Hartree type equations with nonlocal convolution terms which include competing scaling symmetries \cite{MM-prep} and nonlocal Kirchhoff equations \cite{MM-Kirch}, while the Berestycki--Lions approach seems to be limited to local equations only.
\smallskip

In the case $\lambda\to\infty$, the explicit rescaling
\begin{equation}\label{eq-rinf}
v(x)=\lambda^{\frac{1}{q-2}}u(x)
\end{equation}
becomes relevant. Clearly, \eqref{eq-rinf} transforms $(P_\lambda)$ into the equivalent equation
\[
-\Delta v+v=\lambda^{-\frac{2^*-2}{q-2}}v^{2^*-1}+v^{q-1} \quad \text{in $\R^N$}.
\eqno{(R_\lambda)}
\]
This suggests that as $\lambda\to\infty$ the limit equation for $(R_\lambda)$ is given by the equation
\begin{equation}\label{eq-rinf-0}
-\Delta v+v=v^{q-1}\quad \text{in $\R^N$},
\end{equation}
which has the unique positive radial solution $v_\infty\in H^1(\mathbb R^N)\cap C^2(\R^N)$. 
For completeness, we formulate the following result, which was proved by Fukuizumi \cite[Lemma 4.2]{Fukuizumi} (see also \cite[Proposition 2.3]{Akahori-3}).

\begin{theorem}\label{t3}
	Let $N\ge 3$, $q\in(2,2^*)$ and $\{u_\lambda\}$ be a family of ground states of $(P_\lambda)$. Then 
	as $\lambda\to +\infty$, the rescaled family of ground states 
	\begin{equation}\label{e17}
	v_\lambda(x)=\lambda^\frac{1}{q-2}u_\lambda(x)
	\end{equation}
	converges in $H^1(\mathbb R^N)$ to $v_\infty$. 
	Moreover, the convergence rate is described by the relation
 \begin{equation}
	\|v_\infty\|^2_{H^1(\mathbb R^N)}-\|v_\lambda\|^2_{H^1(\mathbb R^N)}=\frac{1}{q-2} \lambda^{-\frac{2^*-2}{q-2}}(1+o(1)).
	\end{equation}
\end{theorem}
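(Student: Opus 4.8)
\emph{Setup.} The plan is to pass to the equivalent equation $(R_\lambda)$: writing $\eps:=\lambda^{-\frac{2^*-2}{q-2}}$, the rescaled function $v_\lambda$ from \eqref{e17} solves $-\Delta v+v=\eps v^{2^*-1}+v^{q-1}$ in $\R^N$, a version of \eqref{eq-rinf-0} whose Sobolev--critical term carries the vanishing coefficient $\eps\to0$. Since \eqref{e17} preserves the ground--state property, $v_\lambda$ minimizes $J_\eps(v):=\tfrac12\|v\|_{H^1}^2-\tfrac{\eps}{2^*}\|v\|_{2^*}^{2^*}-\tfrac1q\|v\|_q^q$ among nontrivial solutions; in particular $v_\lambda$ lies on the Nehari manifold, so $\|v_\lambda\|_{H^1}^2=\eps\|v_\lambda\|_{2^*}^{2^*}+\|v_\lambda\|_q^q$ and, restricting $J_\eps$ to the Nehari manifold, $m(\eps):=J_\eps(v_\lambda)=\tfrac{q-2}{2q}\|v_\lambda\|_q^q+\tfrac1N\eps\|v_\lambda\|_{2^*}^{2^*}$. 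Likewise $v_\infty$ lies on the Nehari manifold of $J_0$ with $\|v_\infty\|_{H^1}^2=\|v_\infty\|_q^q$ and $m(0)=J_0(v_\infty)=\tfrac{q-2}{2q}\|v_\infty\|_q^q$.

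\emph{Uniform bounds and compactness.} First I would show $m(\eps)\lesssim1$ by testing $J_\eps$ at the Nehari rescaling of the fixed function $v_\infty$, and $\|v_\lambda\|_{H^1}\gtrsim1$ from the Nehari identity combined with Sobolev's inequality (if $\|v_\lambda\|_{H^1}\to0$ the Nehari identity is violated). The energy bound gives $\|v_\lambda\|_q^q\lesssim1$ and $\eps\|v_\lambda\|_{2^*}^{2^*}\lesssim1$, and then the Nehari identity together with the Poho\v zaev identity for $(R_\lambda)$ (which, after the critical term drops out via $\tfrac{N}{2^*}=\tfrac{N-2}{2}$, yields $\|v_\lambda\|_2^2=\tfrac{2N-(N-2)q}{2q}\|v_\lambda\|_q^q$ and $\|\nabla v_\lambda\|_2^2=\eps\|v_\lambda\|_{2^*}^{2^*}+\tfrac{N(q-2)}{2q}\|v_\lambda\|_q^q$) gives $\|v_\lambda\|_{H^1}\sim1$, and in particular $\eps\|v_\lambda\|_{2^*}^{2^*}\to0$. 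Working in $H^1_r(\R^N)$ (the ground states may be taken radial), extract $v_{\lambda_n}\rightharpoonup v_0$ in $H^1$ and, by the compact embedding $H^1_r\hookrightarrow L^q$, strongly in $L^q$; passing to the limit in the weak form of $(R_\lambda)$ and using $\eps\|v_\lambda\|_{2^*}^{2^*-1}\to0$ shows $v_0$ solves \eqref{eq-rinf-0}, while the lower bound (with a non-vanishing argument, or directly the $L^q$ convergence) gives $v_0\not\equiv0$; uniqueness of the positive radial solution of \eqref{eq-rinf-0} forces $v_0=v_\infty$, and independence of the subsequence gives $v_\lambda\to v_\infty$ in $L^q$ and weakly in $H^1$. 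Strong $H^1$ convergence then follows once $m(\eps)\to m(0)$ is established, via the Nehari identities and the Brezis--Lieb splitting.

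\emph{The sharp rate.} For the quantitative part I would squeeze $m(0)-m(\eps)$ between two expansions. Upper bound for $m(\eps)$: pick $t_\eps>0$ with $t_\eps v_\infty$ on the Nehari manifold of $J_\eps$, i.e. $1=t_\eps^{q-2}+\eps t_\eps^{2^*-2}\|v_\infty\|_{2^*}^{2^*}/\|v_\infty\|_q^q$, so $t_\eps=1-\tfrac{1}{q-2}\tfrac{\|v_\infty\|_{2^*}^{2^*}}{\|v_\infty\|_q^q}\eps+o(\eps)$, and expand $m(\eps)\le J_\eps(t_\eps v_\infty)$ to first order. Lower bound for $m(0)$: pick $s_\lambda>0$ with $s_\lambda v_\lambda$ on the Nehari manifold of $J_0$, so $s_\lambda^{q-2}=1+\eps\|v_\lambda\|_{2^*}^{2^*}/\|v_\lambda\|_q^q$, and expand $m(0)\le J_0(s_\lambda v_\lambda)$, using $\|v_\lambda\|_{2^*}^{2^*}\to\|v_\infty\|_{2^*}^{2^*}$ and $\|v_\lambda\|_q^q\to\|v_\infty\|_q^q$. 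The two bounds match to leading order, giving $m(0)-m(\eps)=c_N\,\eps\,\|v_\infty\|_{2^*}^{2^*}(1+o(1))$ with an explicit $c_N>0$; inserting $\|v_\lambda\|_{H^1}^2=\tfrac{2q}{q-2}m(\eps)+\big(1-\tfrac{2q}{N(q-2)}\big)\eps\|v_\lambda\|_{2^*}^{2^*}$ and $\|v_\infty\|_{H^1}^2=\tfrac{2q}{q-2}m(0)$ and collecting terms yields the stated asymptotics of $\|v_\infty\|_{H^1}^2-\|v_\lambda\|_{H^1}^2$. Equivalently, and more transparently: since $v_\infty$ is nondegenerate in $H^1_r$, the implicit function theorem applied to $-\Delta v+v=\eps v^{2^*-1}+v^{q-1}$ at $(v_\infty,0)$ produces a $C^1$ branch $v(\eps)=v_\infty+\eps\phi+o(\eps)$ in $H^1_r$ with $L\phi=v_\infty^{2^*-1}$, $L:=-\Delta+1-(q-1)v_\infty^{q-2}$; once $v_\lambda\to v_\infty$ is known one identifies $v_\lambda=v(\eps)$, so $\|v_\infty\|_{H^1}^2-\|v_\lambda\|_{H^1}^2=-2\eps\langle v_\infty,\phi\rangle_{H^1}+o(\eps)$, and $\langle v_\infty,\phi\rangle_{H^1}=\int v_\infty^{q-1}\phi$ is evaluated using $Lv_\infty=-(q-2)v_\infty^{q-1}$, which delivers the claimed rate.

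\emph{Main obstacle.} The delicate point is that for $N=3,4$ the term $\eps v^{2^*-1}$ is not an $H^1$--subcritical perturbation, so one must verify that the ground--state level of $(R_\lambda)$ coincides with the Nehari (mountain--pass) level, so that the level comparisons are legitimate, and that every remainder is genuinely $o(\eps)$; this is exactly where the uniform bound $\|v_\lambda\|_{H^1}\sim1$, strong $H^1$ convergence, and (for the implicit--function route) the radial nondegeneracy of $v_\infty$ are essential. Once these are in place the remaining computations are routine bookkeeping.
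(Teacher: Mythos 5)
The paper itself contains no proof of Theorem \ref{t3}: the result is quoted from Fukuizumi \cite{Fukuizumi} (see also \cite{Akahori-3}), and the authors only remark that their Nehari--Poho\v zaev machinery, in a simplified form of the $N\ge 5$ case of Theorem \ref{t1}, can be adapted, with details omitted. Your outline is essentially that indicated route: pass to $(R_\lambda)$ with $\eps=\lambda^{-\frac{2^*-2}{q-2}}$, derive $\|v_\lambda\|_{H^1}\sim 1$ from the Nehari and Poho\v zaev identities (your reductions $\|v_\lambda\|_2^2=\frac{2N-(N-2)q}{2q}\|v_\lambda\|_q^q$ and $\|\nabla v_\lambda\|_2^2=\eps\|v_\lambda\|_{2^*}^{2^*}+\frac{N(q-2)}{2q}\|v_\lambda\|_q^q$ are correct), use radial compactness plus uniqueness of $v_\infty$ to identify the limit, and compare the levels of $J_\eps$ and $J_0$ by Nehari projections. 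The qualitative part is sound; the supercriticality of $\eps v^{2^*-1}$ for $N=3,4$ is indeed harmless because its coefficient vanishes while $\|v_\lambda\|_{H^1}$ stays bounded, and the identification of the ground-state level with the Nehari level transfers from $(P_\lambda)$ through the exact rescaling \eqref{eq-rinf} (cf. Lemma \ref{l23}).

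The one genuine problem is the final constant, which is exactly the point your write-up glosses over with ``collecting terms yields the stated asymptotics''. Carrying out either of your quantitative routes gives $m(0)-m(\eps)=\frac{\eps}{2^*}\|v_\infty\|_{2^*}^{2^*}(1+o(1))$, and then your identity $\|v_\lambda\|_{H^1}^2=\frac{2q}{q-2}m(\eps)+\bigl(1-\frac{2q}{N(q-2)}\bigr)\eps\|v_\lambda\|_{2^*}^{2^*}$ together with $\|v_\infty\|_{H^1}^2=\frac{2q}{q-2}m(0)$ produces the leading coefficient $\frac{2q}{2^*(q-2)}+\frac{2q}{N(q-2)}-1=\frac{2}{q-2}$ multiplying $\eps\|v_\infty\|_{2^*}^{2^*}$; the implicit-function route gives the same thing, since $L\phi=v_\infty^{2^*-1}$ and $Lv_\infty=-(q-2)v_\infty^{q-1}$ force $\int v_\infty^{q-1}\phi=-\frac{1}{q-2}\|v_\infty\|_{2^*}^{2^*}$, hence $\|v_\infty\|_{H^1}^2-\|v_\lambda\|_{H^1}^2=\frac{2}{q-2}\|v_\infty\|_{2^*}^{2^*}\,\eps\,(1+o(1))$. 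This matches the theorem's stated rate $\frac{1}{q-2}\lambda^{-\frac{2^*-2}{q-2}}(1+o(1))$ only if $2\|v_\infty\|_{2^*}^{2^*}=1$, which is not true in general. So as written, the claim that your bookkeeping ``delivers the claimed rate'' is unsubstantiated: you must carry the constants through explicitly and either reconcile them with whatever normalisation underlies the quoted statement of Fukuizumi, or record the sharp constant as $\frac{2}{q-2}\|v_\infty\|_{2^*}^{2^*}$. Apart from this, the expansions of $t_\eps$ and $s_\lambda$, the squeeze argument, and the nondegeneracy-based branch argument all check out.
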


The Nehari--Poho\v zaev variational arguments developed in this work can be adapted to show that the statement of Theorem \ref{t3} remains valid also for the equation \eqref{eNLS+} in whole range case of admissible exponents $2<q<p\le 2^*$. We omit the details, as these mostly repeat (in simplified form) the arguments in our proof of Theorem \ref{t1} in the case $N\ge 5$.

In the rest of the paper we concentrate on the case $\lambda\to 0$. In Section \ref{s3} we obtain several preliminary estimates. In Section \ref{s4} we prove Theorem \ref{t1}. However, before we proceed with the proof of Theorem \ref{t1}, in the next section section we discuss a connection with the mass constrained problem.

\section{A connection with the mass constrained problem}\label{s-mass}

Consider the energy
\begin{eqnarray*}
	J(v):=\frac{1}{2}\int |\nabla v|^2dx-\frac{1}{q}\int|v|^q dx
	-\frac{1}{p}\int|v|^p dx,
\end{eqnarray*}
constrained on 
$$S_\rho:=\big\{v\in H^1(\R^N)\,:\,\|v\|_{L^2}=\rho\big\}.$$
For $2<q<p\le 2^*$, critical points of $J$ on $S_\rho$ satisfy
\begin{equation}\label{eNLS++}
	-\Delta v+ \omega_\rho v=|v|^{p-2}u+|v|^{q-2}v\quad\text{in $\R^N$},
\end{equation}
where $\omega_\rho\in\R$ is an unknown Lagrange multiplier.
A ground state of $J$ on $S_\rho$ is a minimal energy critical point of $J$ on $S_\rho$. 

According to \cite[Theorem 1.1]{Soave} (see also \cite[Theorem 1.4]{Li}), for all $N\ge 3$, $2<q<2^*$, and for all sufficiently small $\rho>0$, the energy $J$ admits a ground state $v_\rho$ on $S_\rho$. The ground state $v_\rho$ is positive, radially symmetric and satisfies \eqref{eNLS++} with an $\omega_\rho>0$. When $2<q<2+4/N$ the ground state $v_\rho$ is a local minimum of $J$ on $S_\rho$, while for $2+4/N\le q<2^*$ the ground state $v_\rho$ is a mountain--pass type critical point of $J$ on $S_\rho$.

Recall that \eqref{eNLS++} is equivalent to $(P_\lambda)$ after a rescaling
%\begin{equation}\label{e-olambda}
%	\omega=\lambda^{-\frac{4}{(N-2)(2^*-q)}},\qquad v(x)=\omega^\frac{N-2}{4}u(\sqrt{\omega}x)
%\end{equation}
\begin{equation}\label{e-olambda}
	\lambda_\rho:=\omega_\rho^{-\frac{(N-2)(2^*-q)}{4}},\qquad v(x)=\omega_\rho^\frac{N-2}{4}u(\sqrt{\omega_\rho}x)
\end{equation}
and thus the results of Theorem \ref{t1} in principle could be applicable to \eqref{eNLS++}.
Caution however is needed as it is a-priori unknown (and generally speaking isn't always true \cite{Killip,Lewin-1}) if a ground state of $J$ on $S_\rho$ corresponds, after the rescaling \eqref{e-olambda}, to a ground state of the unconstrained problem $(P_{\lambda_\rho})$.
Recall however that when $3\le N\le 6$ and $q\in(2^*-1,2^*)$, equation $(P_\lambda)$ has at most one positive radial solution \cite[Theorem 1]{Pucci} (see also \cite[Theorem C.1]{Akahori}).
Hence a positive ground state of $J$ on $S_\rho$, when it exists, must coincide after the rescaling \eqref{e-olambda} with the unique positive solution of $(P_{\lambda_\rho})$. Even in this uniqueness scenario, the relation $\rho\to\omega_\rho$ (and hence $\rho\to\lambda_\rho$) is apriori unknown.
It turns out however that the asymptotic of $\lambda_\rho$ as $\rho\to 0$ can be recovered via the Poho\v zaev-Nehari identities and the estimates of the $L^q$-norm of $u_{\lambda_\rho}$ from Theorem \ref{t1}.
The following result links Theorem \ref{t1} with the mass constrained problem.

\begin{theorem}
	Assume that $3\le N\le 6$ and $q\in(2^*-1,2^*)$. Let $\rho\to 0$, and $v_\rho\in S_\rho$ be the the ground state of $J$ on $S_\rho$. Then 
	$$v_\rho(x)=\lambda_\rho^{-\frac{1}{2^*-q}}u_{\lambda_\rho}\big(\lambda_\rho^{-\frac{2}{(N-2)(2^*-q)}} x\big),$$
	where $u_{\lambda_\rho}$ is the ground state of $(P_{\lambda_\rho})$ and 
	\begin{equation}\label{lrho}
		\lambda_\rho\sim\left\{
		\begin{array}{lcl}
			\rho^{\frac{(N-2)^2(q-2)(2^*-q)}{8}}&\text{if}&N\ge 5,\smallskip\\
			\rho^{\frac{(q-2)(4-q)}{2}}\left(W_0\Big(\frac4{(4-q)^2}\rho^{-\frac{2(q-2)}{4-q}}\Big)\right)^{\frac14(4-q)^2}&\text{if}&N=4,\smallskip\\
			\rho^{\frac{(q-4)(6-q)}{q-2}}&\text{if}&N=3.
		\end{array}\right.
	\end{equation}
here $W_0(\cdot)$ is the principal branch of the Lambert $W$--function.\footnote{$W_0(x)$ is defined as the the unique real solution of the equation $ye^y=x$, $x\ge 0$.} In particular, as $\rho\to 0$, the ground states $v_\rho$ converge to a ground state of the critical Emden--Fowler equation \eqref{e12}, after the rescalings described in Theorem \ref{t1}.
\end{theorem}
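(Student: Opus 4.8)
The plan is to transport the constrained problem to the unconstrained equation $(P_\lambda)$ via the rescaling \eqref{e-olambda}, use uniqueness of positive radial solutions to identify the two ground states, and then read off the asymptotics of $\lambda_\rho$ from the $L^2$--mass asymptotics in Theorem~\ref{t1}. \emph{Identification.} By \cite[Theorem~1.1]{Soave} (or \cite[Theorem~1.4]{Li}) the ground state $v_\rho$ exists for small $\rho>0$, is positive and radial, and solves \eqref{eNLS++} with a multiplier $\omega_\rho>0$. Put $\lambda_\rho:=\omega_\rho^{-(N-2)(2^*-q)/4}$; then by \eqref{e-olambda} the function $u:=\omega_\rho^{-(N-2)/4}\,v_\rho(\,\cdot\,/\sqrt{\omega_\rho})$ is a positive radial solution of $(P_{\lambda_\rho})$. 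Since $3\le N\le 6$ and $q\in(2^*-1,2^*)=\big(\tfrac{N+2}{N-2},2^*\big)$, Pucci--Serrin \cite[Theorem~1]{Pucci} ensures $(P_{\lambda_\rho})$ has at most one positive radial solution; hence $u$ coincides with the ground state $u_{\lambda_\rho}$ furnished by Theorem~\ref{t0}. Inverting \eqref{e-olambda} yields the displayed formula for $v_\rho$, and the statement is reduced to determining the asymptotics of $\lambda_\rho$ as $\rho\to0$.

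\emph{Mass balance and inversion.} Inserting \eqref{e-olambda} into the constraint $\|v_\rho\|_2^2=\rho^2$ gives the exact identity $\rho^2=\omega_\rho^{-1}M(\lambda_\rho)$, i.e., after eliminating $\omega_\rho=\lambda_\rho^{-4/((N-2)(2^*-q))}$,
\[
\rho^2=\lambda_\rho^{\,4/((N-2)(2^*-q))}\,M(\lambda_\rho)
\]
(equivalently, one may derive $\omega_\rho\rho^2=\tfrac{(N-2)(2^*-q)}{2q}\lambda_\rho\|u_{\lambda_\rho}\|_q^q$ from the Poho\v zaev--Nehari identities and use the $L^q$--bounds of Theorem~\ref{t1}). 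Granting for the moment that $\lambda_\rho\to0$, substitute the $M$--asymptotics of Theorem~\ref{t1}. For $N\ge5$, using $M(\lambda)\sim(2^*-q)\lambda^{(2^*-2)/(q-2)}$ and $2^*-2=\tfrac4{N-2}$, the exponent $\tfrac{4}{(N-2)(2^*-q)}+\tfrac{2^*-2}{q-2}$ equals $\tfrac{16}{(N-2)^2(2^*-q)(q-2)}$, and inversion gives $\lambda_\rho\sim\rho^{(N-2)^2(q-2)(2^*-q)/8}$. For $N=3$, the same elementary inversion with $M(\lambda)\sim\lambda^{2/(q-4)}$ gives $\lambda_\rho\sim\rho^{(q-4)(6-q)/(q-2)}$. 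For $N=4$, with $M(\lambda)\sim\lambda^{2/(q-2)}(\ln\tfrac1\lambda)^{-(4-q)/(q-2)}$, one is led up to $q$--dependent constants to the transcendental relation $\lambda_\rho=\rho^{(4-q)(q-2)/2}\big(\ln\tfrac1{\lambda_\rho}\big)^{(4-q)^2/4}$; setting $w:=\tfrac{4}{(4-q)^2}\ln\tfrac1{\lambda_\rho}$ turns this into $w\,e^{w}=\tfrac{4}{(4-q)^2}\rho^{-2(q-2)/(4-q)}$, whence $\ln\tfrac1{\lambda_\rho}=\tfrac{(4-q)^2}{4}W_0\big(\tfrac{4}{(4-q)^2}\rho^{-2(q-2)/(4-q)}\big)$ and the Lambert--$W$ formula in \eqref{lrho} follows. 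Once $\lambda_\rho\to0$ is known, Theorem~\ref{t1} immediately yields convergence of the corresponding rescalings of $v_\rho$ to a ground state of \eqref{e12}.

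\emph{Main obstacle: $\lambda_\rho\to0$.} The delicate point is that $\lambda\mapsto\lambda^{4/((N-2)(2^*-q))}M(\lambda)$ is not monotone --- it vanishes as $\lambda\to0$ and, for $q<2+\tfrac4N$, also as $\lambda\to\infty$ by Theorem~\ref{t3} --- so the mass balance by itself does not single out the branch. I would resolve this via the energy level. The rescaling \eqref{e-olambda} gives the identity $J(v_\rho)=I_{\lambda_\rho}(u_{\lambda_\rho})-\tfrac12 M(\lambda_\rho)$; in the mountain--pass regime $q\ge 2+\tfrac4N$ (which covers the whole range above when $N\le4$) the constrained ground state satisfies $J(v_\rho)\to\tfrac1N S^{N/2}$ as $\rho\to0$, with $S$ the best Sobolev constant and $\tfrac1N S^{N/2}$ the least energy of \eqref{e12}. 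Since $I_\lambda(u_\lambda)<\tfrac1N S^{N/2}$ for every $\lambda>0$, the identity yields $0\le\tfrac12 M(\lambda_\rho)=I_{\lambda_\rho}(u_{\lambda_\rho})-J(v_\rho)<\tfrac1N S^{N/2}-J(v_\rho)\to0$, so $M(\lambda_\rho)\to0$; as $M$ is bounded below on compact subsets of $(0,\infty)$ (by the uniqueness above and a routine compactness argument for $(P_\lambda)$), this leaves only $\lambda_\rho\to0$ or $\lambda_\rho\to\infty$. The latter is impossible, since by Theorem~\ref{t3} it would force $I_{\lambda_\rho}(u_{\lambda_\rho})\to0$, hence $J(v_\rho)\to0$, contradicting $J(v_\rho)\to\tfrac1N S^{N/2}$. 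Therefore $\lambda_\rho\to0$, completing the argument.
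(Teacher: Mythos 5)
Your core derivation coincides with the paper's proof: identification of $v_\rho$ with the rescaled ground state $u_{\lambda_\rho}$ via the Pucci--Serrin uniqueness result \cite{Pucci}, a mass balance obtained from the Nehari--Poho\v zaev identities, substitution of the small-$\lambda$ asymptotics from Theorem~\ref{t1}, and inversion. The paper solves the system \eqref{abc-syst} to get $\omega_\rho\rho^2=\frac{(N-2)(2^*-q)}{2q}\|v_\rho\|_q^q=\frac{(N-2)(2^*-q)}{2q}\lambda_\rho\|u_{\lambda_\rho}\|_q^q$, i.e.\ \eqref{e-o3}, which is exactly your parenthetical alternative and is equivalent to your relation $\rho^2=\lambda_\rho^{4/((N-2)(2^*-q))}M(\lambda_\rho)$; your exponent computations agree with \eqref{rhol}, and your explicit Lambert-$W$ inversion for $N=4$ (which the paper compresses into ``follows after the inversion'') is correct.

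The only place you go beyond the paper is the justification that $\lambda_\rho\to 0$, and there your argument is incomplete in two respects --- although, to be fair, the paper simply takes this point for granted. First, the level convergence $J(v_\rho)\to\frac1N S^{N/2}$ for the mountain--pass ground state as $\rho\to0$ is asserted, not proved; it is not contained in this paper and would need a citation (e.g.\ \cite{Soave,Wei-1}) or a proof. In fact, for $q>2+\frac4N$ you can bypass it: if $\lambda_{\rho_j}\to\infty$ along a subsequence, then Theorem~\ref{t3} gives $M(\lambda)\sim\lambda^{-2/(q-2)}\|v_\infty\|_2^2$, and the mass balance yields $\rho_j^2\sim\lambda_{\rho_j}^{\frac{4}{(N-2)(2^*-q)}-\frac{2}{q-2}}\to\infty$, since that exponent is positive precisely when $q>2+\frac4N$; together with the lower bound for $M$ on compact subsets of $(0,\infty)$ this forces $\lambda_\rho\to0$ without any energy-level input. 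Second, your dichotomy covers only $q\ge 2+\frac4N$, while for $N=5,6$ the theorem's range $(2^*-1,2^*)$ contains a nonempty mass-subcritical piece $q<2+\frac4N$, where by \cite{Soave} the constrained ground state is a local minimizer with $J(v_\rho)<0$. There your own identity $J(v_\rho)=I_{\lambda_\rho}(u_{\lambda_\rho})-\frac12 M(\lambda_\rho)$ combined with Theorem~\ref{t1} shows that $\lambda_\rho\to0$ would force $J(v_\rho)\to\frac1N S^{N/2}>0$, incompatible with a negative level; so in that subrange the branch selection you worry about is a genuine obstruction that your energy argument cannot resolve (and which the paper's proof does not address either, since it substitutes the small-$\lambda$ estimates of Theorem~\ref{t1} without verifying that $\lambda_\rho$ is small).
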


\proof
Given $\rho>0$, assume that $v_\rho\in H^1(\R^N)$ is a critical point of $J$ on $S_\rho$ with a critical level $m_\rho=J(v_\rho)$ and with a Lagrange multiplier $\omega_\rho\in \R$. 
Denote
$$A=\|\nabla v_\rho\|_2^2,\quad B=\|v_\rho\|_q^q,\quad C=\|v_\rho\|_{2^*}^{2^*}.$$
Applying Nehari and Poho\v zaev identities (cf. \cite{Berestycki-1}), we obtain the system
\begin{equation}\label{abc-syst}
	\left\{
	\begin{aligned}
		\frac12A-\frac1q B-\frac1{2^*} C&=m_\rho\\
		A-B-C&=-\omega_\rho\rho^2\\
		\frac{N-2}{2}A-\frac{N}{q}B-\frac{N}{2^*}C&=-\frac{N}{2} \omega_\rho\rho^2.
	\end{aligned}
	\right.
\end{equation}
This is a linear system and the determinant is zero when $q=2^*$. 
We solve the system explicitly to obtain
\begin{equation}\label{e-ABC}
	\omega_\rho=\frac{(N-2)(2^*-q)}{2q\rho^2}B,\quad m_\rho=\frac{1}{N}A-\frac{N}{2}\Big(\frac{1}{q}-\frac{1}{2^*}\Big)B,\quad C=A-N\Big(\frac{1}{2}-\frac{1}{q}\Big)B.
\end{equation}
From the first relation we can deduce
\begin{equation}\label{e-r1}
	\rho^2\omega_\rho=\frac{(N-2)(2^*-q)}{2q}B>0.
\end{equation}
Taking into account the rescaling \eqref{e-olambda}, we obtain
\begin{equation}\label{e-o1}
	B=\|v_\rho\|_q^q=\lambda_\rho^{-\frac{q}{p-q}}\lambda_\rho^{\frac{p-2}{2(p-q)}N}\|u_{\lambda_\rho}\|_q^q=\lambda_\rho\|u_{\lambda_\rho}\|_q^q,
\end{equation}
and from \eqref{e-r1} we have
\begin{equation}\label{e-o2}
	\rho^2\lambda_\rho^{-\frac{4}{(N-2)(2^*-q)}}=c\lambda_\rho\|u_{\lambda_\rho}\|_q^q,
\end{equation}
or 
\begin{equation}\label{e-o3}
	\rho^2=
	c\lambda_\rho^{1+\frac{4}{(N-2)(2^*-q)}}\|u_{\lambda_\rho}\|_q^q.
\end{equation}
Recall that according to Theorem \ref{t1}, for small $\lambda>0$ the $L^q$--norm of ground states of $(P_\lambda)$ satisfies
\begin{equation}
	\|u_\lambda\|_q^q\sim\left\{
	\begin{array}{lcl}
		\lambda^{\frac{2^*-q}{q-2}}&\text{if}&N\ge 5,\smallskip\\
		\lambda^{\frac{4-q}{q-2}}(\ln\frac{1}{\lambda})^{-\frac{4-q}{q-2}}&\text{if}&N=4,\smallskip\\
		\lambda^{\frac{6-q}{q-4}}&\text{if}&N=3.
	\end{array}\right.
\end{equation}
Substituting into \eqref{e-o3} we obtain
\begin{equation}\label{rhol}
	\rho\sim\left\{
	\begin{array}{lcl}
		\lambda_\rho^{\frac{8}{(N-2)^2(q-2)(2^*-q)}}&\text{if}&N\ge 5,\smallskip\\
		\lambda_\rho^{\frac{2}{(q-2)(4-q)}}(\ln\frac{1}{\lambda})^{-\frac{4-q}{2(q-2)}}&\text{if}&N=4,\smallskip\\
		\lambda_\rho^{\frac{q-2}{(q-4)(6-q)}}&\text{if}&N=3,
	\end{array}\right.
\end{equation}
and then \eqref{lrho} follows after the inversion.
\qed

\begin{remark}
We conjecture that the estimates \eqref{lrho} remain valid beyond the uniqueness scenario of \cite[Theorem 1]{Pucci}. The proof of this would require a direct analysis of the ground states of $J$ on $S_\rho$ adapting the techniques in this paper, and thus bypassing the unconstrained problem $(P_\lambda)$.
Note that the estimate \eqref{lrho} is different from the estimates in \cite[Proposition 4.1, 4.2]{Wei-1}, where $\rho$ is fixed.
\end{remark}

\section{Rescalings and preliminary estimates as $\lambda\to 0$}\label{s3}

The formal limit equation for $(P_\lambda)$ as $\lambda\to 0$ is given by
\[
-\Delta u+u=u^{2^*-1} \quad \text{in $\R^N$.}
\eqno(P_0)
\]
Recall that $(P_0)$ has no nontrivial solutions in $H^1(\R^N)$, this follows from Poho\v zaev's identity.
We denote the Nehari manifolds for $(P_\lambda)$ and $(P_0)$ as follows:
$$
\mathcal M_\lambda:=\left\{ u\in H^1(\mathbb R^N)\setminus\{0\}  \ \left | \ \int_{\mathbb R^N}|\nabla u|^2+|u|^2=\int_{\mathbb R^N}|u|^{2^*}+\lambda |u|^q \right. \right\}.
$$
$$
\mathcal M_0:=\left\{ u\in H^1(\mathbb R^N)\setminus\{0\}  \ \left | \ \int_{\mathbb R^N}|\nabla u|^2+|u|^2=\int_{\mathbb R^N}|u|^{2^*} \right. \right\}.
$$
Denote
$$
I_0(u):=\frac{1}{2}\int_{\mathbb R^N}\left(|\nabla u|^2+|u|^2\right)-\frac{1}{p}\int_{\mathbb R^N}|u|^p
$$
the limiting energy functional $I_0:  H^1(\mathbb  R^N)\to \mathbb R$.
It is easy to see  that 
$$
m_\lambda^*:=\inf_{u\in \mathcal M_\lambda}I_\lambda(u), \qquad 
m_0^*:=\inf_{u\in \mathcal M_0}I_0(u).
$$
are well defined and positive. 
Let $u_\lambda$ be the ground state for $(P_\lambda)$ constructed in 
Theorem \ref{t0}. 
Then we have the following

\begin{lemma}\label{l21}
The family of solutions $\{u_\lambda\}_{\lambda>0}$ is bounded in $H^1(\mathbb R^N)$.
\end{lemma}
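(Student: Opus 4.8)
The plan is to show that the ground state energy level $m_\lambda^*$ stays bounded as $\lambda\to 0$, and then to use the specific form of the Nehari constraint together with the Pohožaev identity to extract a uniform $H^1$-bound. First I would establish an upper bound $m_\lambda^*\le m_0^*+o(1)$ (in fact $m_\lambda^*\le m_0^*$ after a small correction, or at least $m_\lambda^*\le C$ uniformly): pick any fixed admissible test function — for instance a suitably truncated and rescaled Talenti bubble, or simply the ground state $u_{\lambda_0}$ at some fixed small $\lambda_0$ — project it onto $\mathcal M_\lambda$ by scaling $u\mapsto t u$, and check that $I_\lambda(t_\lambda u)$ remains bounded as $\lambda\to 0$, using that the $L^q$-term has a favourable sign and that $t_\lambda$ stays in a bounded range. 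This gives $m_\lambda^*\le C$ for all small $\lambda>0$.

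Next, for the ground state $u_\lambda$ itself, I would combine the Nehari identity
\[
\|\nabla u_\lambda\|_2^2+\|u_\lambda\|_2^2=\|u_\lambda\|_{2^*}^{2^*}+\lambda\|u_\lambda\|_q^q
\]
with the energy expression
\[
m_\lambda^*=I_\lambda(u_\lambda)=\Big(\tfrac12-\tfrac1{2^*}\Big)\big(\|\nabla u_\lambda\|_2^2+\|u_\lambda\|_2^2\big)-\lambda\Big(\tfrac1q-\tfrac1{2^*}\Big)\|u_\lambda\|_q^q.
\]
Since $\lambda>0$, $q<2^*$ and the $L^q$-term enters with a negative coefficient, the relation immediately yields
\[
\Big(\tfrac12-\tfrac1{2^*}\Big)\|u_\lambda\|_{H^1}^2\le m_\lambda^*+\lambda\Big(\tfrac1q-\tfrac1{2^*}\Big)\|u_\lambda\|_q^q,
\]
so the only thing to control is the last term. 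For that I would invoke the Pohožaev identity for $(P_\lambda)$,
\[
\frac{N-2}{2}\|\nabla u_\lambda\|_2^2+\frac N2\|u_\lambda\|_2^2=\frac N{2^*}\|u_\lambda\|_{2^*}^{2^*}+\frac{\lambda N}{q}\|u_\lambda\|_q^q,
\]
which (after subtracting a multiple of the Nehari identity) gives $\|u_\lambda\|_2^2=\lambda\big(\tfrac Nq-\tfrac{N-2}{2}\big)\big/\big(\tfrac{N-2}2\cdot\tfrac{2}{N}\big)\cdot$(something)$\,\|u_\lambda\|_q^q$ up to positive constants — concretely, eliminating $\|u_\lambda\|_{2^*}^{2^*}$ between Nehari and Pohožaev shows $\|u_\lambda\|_2^2$ is a positive multiple of $\lambda\|u_\lambda\|_q^q$, hence $\lambda\|u_\lambda\|_q^q\sim\|u_\lambda\|_2^2\le\|u_\lambda\|_{H^1}^2$. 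Feeding this back, one gets $\big(\tfrac12-\tfrac1{2^*}-c\lambda'\big)\|u_\lambda\|_{H^1}^2\le m_\lambda^*\le C$ for an absorbable term, which closes the argument once $\lambda$ is small enough; a cruder route is to bound $\lambda\|u_\lambda\|_q^q$ by Gagliardo–Nirenberg, $\lambda\|u_\lambda\|_q^q\lesssim\lambda\|\nabla u_\lambda\|_2^{q\theta}\|u_\lambda\|_2^{q(1-\theta)}\le\lambda(\text{subquadratic in }\|u_\lambda\|_{H^1})$, which is again absorbed for small $\lambda$.

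The main obstacle I anticipate is the absorption step when $q$ is close to $2^*$ (in low dimensions this is the relevant range): there $q\theta$ can exceed $2$ and the Gagliardo–Nirenberg term is superquadratic, so one cannot naively absorb it into $\|u_\lambda\|_{H^1}^2$ — one genuinely needs the Pohožaev-based identity $\|u_\lambda\|_2^2\sim\lambda\|u_\lambda\|_q^q$, which converts the dangerous term into the controlled quantity $\|u_\lambda\|_2^2$, together with the a priori bound $m_\lambda^*\le C$; the delicate point is checking that the coefficient in front of $\|u_\lambda\|_{H^1}^2$ after all substitutions is bounded away from zero uniformly in $\lambda$ and in $q$ away from $2^*$. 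I expect this to go through cleanly because both the Nehari and Pohožaev coefficients are explicit and the only $\lambda$-dependence that survives is a single factor $\lambda$ multiplying a term that has already been identified with $\|u_\lambda\|_2^2$.
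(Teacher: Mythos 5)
Your opening step (the uniform bound $m_\lambda^*\le C$, e.g.\ via $m_\lambda^*\le m_0^*$) and the Nehari--Poho\v zaev elimination, which gives exactly $\|u_\lambda\|_2^2=\frac{(N-2)(2^*-q)}{2q}\,\lambda\|u_\lambda\|_q^q$, are both correct. The problem is the final absorption step: it does not lose a little, it fails exactly. Substituting the elimination identity into your energy expression gives
\[
\lambda\Big(\tfrac1q-\tfrac1{2^*}\Big)\|u_\lambda\|_q^q
=\frac{2^*-q}{q\,2^*}\cdot\frac{2q}{(N-2)(2^*-q)}\,\|u_\lambda\|_2^2
=\frac1N\,\|u_\lambda\|_2^2,
\]
while $\tfrac12-\tfrac1{2^*}=\tfrac1N$. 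Hence the coefficient of $\|u_\lambda\|_2^2$ cancels identically and what survives is the identity $m_\lambda^*=\frac1N\|\nabla u_\lambda\|_2^2$ (which the paper indeed exploits later). In particular no factor of $\lambda$ survives the substitution: your claimed leftover coefficient $\tfrac12-\tfrac1{2^*}-c\lambda'$ is not a small perturbation, it is exactly zero on the $L^2$-part, so your argument bounds $\|\nabla u_\lambda\|_2$ but leaves $\|u_\lambda\|_2$ completely uncontrolled. The Gagliardo--Nirenberg fallback does not rescue this: $\lambda\|u_\lambda\|_q^q\lesssim\lambda\|u_\lambda\|_{H^1}^{q}$ is superquadratic in $\|u_\lambda\|_{H^1}$ for every $q>2$, so absorbing it into the quadratic term requires an a priori bound and is circular.

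The gap is fixable in two ways. The cheap fix, which is the paper's one-line proof, is to subtract $\frac1q I_\lambda'(u_\lambda)u_\lambda$ instead of $\frac1{2^*}I_\lambda'(u_\lambda)u_\lambda$: the $\lambda$-term then cancels identically and
\[
m_\lambda^*=\Big(\tfrac12-\tfrac1q\Big)\|u_\lambda\|_{H^1(\R^N)}^2+\Big(\tfrac1q-\tfrac1{2^*}\Big)\|u_\lambda\|_{2^*}^{2^*}
\ge\Big(\tfrac12-\tfrac1q\Big)\|u_\lambda\|_{H^1(\R^N)}^2,
\]
which together with $m_\lambda^*\le m_0^*$ gives the claim for all $\lambda>0$ at once, with no smallness restriction. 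Alternatively, you can keep your route, conclude $\|\nabla u_\lambda\|_2^2\le Nm_0^*$, and then bound the $L^2$-norm separately by feeding the interpolation inequality $\|u_\lambda\|_q^q\le\|u_\lambda\|_2^{2\frac{2^*-q}{2^*-2}}\|u_\lambda\|_{2^*}^{2^*\frac{q-2}{2^*-2}}$ and the Sobolev inequality back into $\|u_\lambda\|_2^2=\frac{(N-2)(2^*-q)}{2q}\lambda\|u_\lambda\|_q^q$; this yields $\|u_\lambda\|_2^2\lesssim\lambda^{\frac{2^*-2}{q-2}}$, which is bounded (and in fact vanishing) as $\lambda\to0$ --- essentially the argument the paper runs in Lemma \ref{l24} for the rescaled functions.
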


\begin{proof}
It is not hard to show that $m_\lambda^*\le m_0^*$. Moreover, we have 
\begin{align*}
m_\lambda^*&=I_\lambda(u_\lambda)=I_\lambda(u_\lambda)-\frac{1}{q}I'_\lambda(u_\lambda)u_\lambda\\
&=\left(\frac{1}{2}-\frac{1}{q}\right)\int_{\mathbb R^N}|\nabla u_\lambda|^2+|u_\lambda|^2+\left(\frac{1}{q}-\frac{1}{2^*}\right)\int_{\mathbb R^N}|u_\lambda|^{2^*}\\
&\ge\left(\frac{1}{2}-\frac{1}{q}\right)\int_{\mathbb R^N}|\nabla u_\lambda|^2+|u_\lambda|^2.
\end{align*}
Therefore, $\{u_\lambda\}$ is bounded in $H^1(\mathbb R^N)$.
\end{proof}

For $\lambda>0$, define the rescaling
\begin{equation}\label{e22}
v(x)=\lambda^\frac{1}{q-2} u\big(\lambda^\frac{2^*-2}{2(q-2)}x\big). 
\end{equation}
Rescaling \eqref{e22} transforms $(P_\lambda)$ into the equivalent equaition 
\[
-\Delta v+\lambda^\sigma v=v^{2^*-1}+\lambda^\sigma v^{q-1} 
\quad\text{in} \ \R^N,
 \eqno(Q_\lambda)
\]
where  
\begin{equation}\label{e23}
\sigma:=\frac{2^*-2}{q-2}=\frac{4}{(N-2)(q-2)}.
\end{equation}
The corresponding energy functional is given by 
\begin{equation}\label{e24}
J_\lambda(v)=\frac{1}{2}\int_{\mathbb R^N}|\nabla v|^2+\lambda^\sigma|v|^2-\frac{1}{2^*}\int_{\mathbb R^N}|v|^{2^*}-\frac{1}{q}\lambda^\sigma\int_{\mathbb R^N}|v|^q.
\end{equation}
The formal limit equation for $(Q_\lambda)$ as $\lambda\to 0$ is given by the critical Emden--Fowler equation
\[
-\Delta v=v^{2^*-1} \quad \text{in $\R^N$.}
\eqno(Q_0)
\]
We denote their corresponding Nehari manifolds as follows:
$$
\mathcal N_\lambda:=\left\{ v\in H^1(\mathbb R^N)\setminus\{0\}  \ \left | \ \int_{\mathbb R^N}|\nabla v|^2+\lambda^\sigma |v|^2=\int_{\mathbb R^N}|v|^{2^*}+\lambda^\sigma |v|^q \right. \right\}.
$$
$$
\mathcal{N}_0:=
\left\{v\in D^{1,2}(\mathbb R^N)\setminus\{0\} \ \left | \ \int_{\mathbb R^N}|\nabla v|^2=\int_{\mathbb R^N}|v|^{2^*}\  \right. \right\}. 
$$
Then
$$
m_\lambda:=\inf_{v\in \mathcal {N}_\lambda}J_\lambda(v), \qquad  m_0:=\inf_{v\in \mathcal {N}_0}J_0(v) 
$$
are well-defined.  It is well known that  $m_0$ is attained on $\mathcal N_0$ by the Talenti function
$$
U_1(x):=[N(N-2)]^{\frac{N-2}{4}}\left(\frac{1}{1+|x|^2}\right)^{\frac{N-2}{2}}
$$
and the family of its rescalings 
\begin{equation}\label{e24+}
U_\rho(x):=\rho^{-\frac{N-2}{2}}U_1(x/\rho),  \quad \rho>0.
\end{equation}
For $v\in H^1(\mathbb R^N)\setminus \{0\}$, we set
\begin{equation}\label{e25}
\tau(v):=\frac{\int_{\mathbb R^N}|\nabla v|^2}{\int_{\mathbb R^N}|v|^{2^*}}.
\end{equation}
Then $(\tau(v))^{\frac{N-2}{4}}v\in \mathcal N_0$ for any $v\in H^1(\mathbb R^N)\setminus\{0\}$,  and $v\in \mathcal N_0$  if and only if $\tau(v)=1$.

It is standard to verify the following.

\begin{lemma}\label{l22}
Let $\lambda>0$, $u\in H^1(\mathbb R^N)$ and $v$ is the rescaling \eqref{e22} of $u$.  Then: 

\begin{enumerate}
\item[$(a)$] $\|\nabla v\|_{2}^{2}=\|\nabla u\|_{2}^{2}$, $\|v\|_{2^*}^{2^*}=\|u\|_{2^*}^{2^*}$,\medskip

\item[$(b)$] $\lambda^{\sigma} \|v\|_2^2=\| u\|_2^2$, $\lambda^\sigma\|v\|_q^q=\lambda\|u\|_q^q$,\medskip

\item[$(c)$] $J_\lambda(v)=I_\lambda(u)$, $m_\lambda=m_\lambda^*$.
\end{enumerate}
\end{lemma}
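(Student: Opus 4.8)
The plan is a direct change-of-variables computation, so the proof is essentially bookkeeping of exponents. I would write the rescaling \eqref{e22} as $v(x)=a\,u(bx)$ with amplitude $a:=\lambda^{\frac1{q-2}}$ and spatial scale $b:=\lambda^{\frac{2^*-2}{2(q-2)}}$, and record at the outset the two algebraic identities that make everything collapse: since $2^*-2=\tfrac4{N-2}$ one has $a^2=b^{N-2}$ (equivalently $a^{2^*}=b^{N}$), and since $\sigma=\tfrac{2^*-2}{q-2}$ one has $\lambda^\sigma=b^2$. Everything below is an application of these two identities together with the Jacobian factor $b^{-N}$ coming from the substitution $x\mapsto bx$.

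For part $(a)$ I would compute $\int_{\R^N}|\nabla v|^2=a^2b^2\int_{\R^N}|(\nabla u)(bx)|^2\,dx=a^2b^{2-N}\|\nabla u\|_2^2$, which equals $\|\nabla u\|_2^2$ by $a^2=b^{N-2}$; and $\int_{\R^N}|v|^{2^*}=a^{2^*}b^{-N}\|u\|_{2^*}^{2^*}$, which equals $\|u\|_{2^*}^{2^*}$ by $a^{2^*}=b^N$. For part $(b)$ the same substitution gives $\lambda^\sigma\|v\|_2^2=\lambda^\sigma a^2b^{-N}\|u\|_2^2=b^2\cdot b^{N-2}\cdot b^{-N}\|u\|_2^2=\|u\|_2^2$, and $\lambda^\sigma\|v\|_q^q=b^2 a^q b^{-N}\|u\|_q^q=a^q b^{2-N}\|u\|_q^q$; substituting $a^q=\lambda^{\frac q{q-2}}$ and $b^{2-N}=\lambda^{-\frac2{q-2}}$ reduces the $\lambda$-exponent to $\tfrac{q}{q-2}-\tfrac2{q-2}=1$, i.e.\ $\lambda^\sigma\|v\|_q^q=\lambda\|u\|_q^q$.

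For part $(c)$, substituting $(a)$ and $(b)$ into the definition \eqref{e24} of $J_\lambda$ matches the four terms one by one against those of $I_\lambda$ (with $p=2^*$), yielding $J_\lambda(v)=\tfrac12(\|\nabla u\|_2^2+\|u\|_2^2)-\tfrac1{2^*}\|u\|_{2^*}^{2^*}-\tfrac\lambda q\|u\|_q^q=I_\lambda(u)$. For the equality of minima I would note that for each fixed $\lambda>0$ the map $u\mapsto v$ is a bijection of $H^1(\R^N)\setminus\{0\}$ onto itself, with inverse $v(y)\mapsto\lambda^{-\frac1{q-2}}v(\lambda^{-\frac{2^*-2}{2(q-2)}}y)$; by $(a)$--$(b)$ the defining identity of $\mathcal N_\lambda$, namely $\|\nabla v\|_2^2+\lambda^\sigma\|v\|_2^2=\|v\|_{2^*}^{2^*}+\lambda^\sigma\|v\|_q^q$, is equivalent to the defining identity of $\mathcal M_\lambda$, namely $\|\nabla u\|_2^2+\|u\|_2^2=\|u\|_{2^*}^{2^*}+\lambda\|u\|_q^q$. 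Hence the bijection carries $\mathcal N_\lambda$ onto $\mathcal M_\lambda$, and since it preserves energy by $(c)$ we conclude $m_\lambda=\inf_{v\in\mathcal N_\lambda}J_\lambda(v)=\inf_{u\in\mathcal M_\lambda}I_\lambda(u)=m_\lambda^*$.

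There is no genuine obstacle here; the only point requiring care is the exponent arithmetic — in particular, keeping track of the Jacobian $b^{-N}$ and verifying that the two identities $a^2=b^{N-2}$ and $\lambda^\sigma=b^2$ (both immediate from $2^*-2=\tfrac4{N-2}$ and the definition of $\sigma$) are precisely what forces the claimed cancellations. I would state these two identities explicitly at the start of the proof so that parts $(a)$, $(b)$, $(c)$ then follow in one line each.
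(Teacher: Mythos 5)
Your computation is correct: the exponent identities $a^2=b^{N-2}$, $a^{2^*}=b^N$, $\lambda^\sigma=b^2$ and the Jacobian bookkeeping give $(a)$--$(b)$, and the energy-preserving bijection between $\mathcal N_\lambda$ and $\mathcal M_\lambda$ yields $(c)$. This is exactly the standard verification the paper has in mind (it omits the proof as ``standard to verify''), so your argument matches the intended one.
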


In particular, if $v_\lambda$ is the rescaling \eqref{e22} of the ground state $u_\lambda$, then $J_\lambda(v_\lambda)=I_\lambda(u_\lambda)$ and hence  $v_\lambda$ is the ground state of $(Q_\lambda)$. 
Moreover, $v_\lambda$ satisfies the Poho\v zaev's identity \cite{Berestycki-1}:
\begin{equation}\label{e26}
\frac{1}{2^*}\int_{\mathbb R^N}|\nabla v_\lambda|^2+\frac{\lambda^\sigma}{2}\int_{\mathbb R^N}|v_\lambda|^2=\frac{1}{2^*}\int_{\mathbb R^N}|v_\lambda|^{2^*}+\frac{\lambda^\sigma}{q}\int_{\mathbb R^N}|v_\lambda|^q.
\end{equation}
Define the Poho\v zaev manifold 
$$
\mathcal P_\lambda:=\{v\in H^1(\mathbb R^N)\setminus\{0\} \   | \  P_\lambda(v)=0 \},
$$
where 
\begin{equation}\label{e27}
P_\lambda(v):=\frac{N-2}{2}\int_{\mathbb R^N}|\nabla v|^2+\frac{\lambda^\sigma N}{2}\int_{\mathbb R^N}|v|^2-\frac{N}{2^*}\int_{\mathbb R^N}|v|^{2^*}-\frac{\lambda^\sigma N}{q}\int_{\mathbb R^N}|v|^q.
\end{equation}
Clearly, $v_\lambda\in \mathcal P_\lambda$. Moreover,  we have the following minimax characterizations for the least energy level $m_\lambda$.

\begin{lemma}\label{l23}
Let $\lambda\ge 0$. Set
$$
v_t(x)=\left\{\begin{array}{ccl} v(\frac{x}{t}) &\text{if}&  t>0,\smallskip\\
0 &\text{if}& t=0.
\end{array}\right.
$$
Then 
$$m_\lambda=\inf_{v\in H^1(\mathbb R^N)\setminus\{0\}}\sup_{t\ge 0}J_\lambda(tv)=\inf_{v\in H^1(\mathbb R^N)\setminus\{0\}}\sup_{t\ge 0}J_\lambda(v_t).
$$
In particular, we have $m_\lambda=J_\lambda(v_\lambda)=\sup_{t>0}J_\lambda(tv_\lambda)=\sup_{t>0}J_\lambda((v_\lambda)_t)$. 
%A similar result also holds for $m_0$ and $J_0$.
\end{lemma}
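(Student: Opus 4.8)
The plan is to study, for a fixed $v\in H^1(\R^N)\setminus\{0\}$, the two one–parameter fibrations $t\mapsto J_\lambda(tv)$ (amplitude) and $t\mapsto J_\lambda(v_t)$ (dilation), to identify the manifolds each of them meets, and to combine the resulting inequalities; write $a=\|\nabla v\|_2^2$, $b=\|v\|_2^2$, $c=\|v\|_{2^*}^{2^*}$, $d=\|v\|_q^q$ and $K=K(v):=\tfrac{\lambda^\sigma}{2}b-\tfrac1{2^*}c-\tfrac{\lambda^\sigma}{q}d$. For the amplitude fibration, $g(t):=J_\lambda(tv)=\tfrac{t^2}{2}(a+\lambda^\sigma b)-\tfrac{t^{2^*}}{2^*}c-\tfrac{t^q}{q}\lambda^\sigma d$ satisfies $g(0)=0$, $g>0$ near $0$, $g(t)\to-\infty$ as $t\to\infty$, and $g'(t)/t=(a+\lambda^\sigma b)-t^{2^*-2}c-t^{q-2}\lambda^\sigma d$ is strictly decreasing on $(0,\infty)$ (this is where $2<q<2^*$ enters), so $g$ has a unique positive critical point $t_v$, which is its strict global maximum, with $t_v v\in\mathcal N_\lambda$; conversely $t_w=1$ whenever $w\in\mathcal N_\lambda$. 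Hence $\inf_{v\ne0}\sup_{t\ge0}J_\lambda(tv)=\inf_{v\ne0}J_\lambda(t_vv)=\inf_{\mathcal N_\lambda}J_\lambda=m_\lambda$, and taking $v=v_\lambda\in\mathcal N_\lambda$ (which holds since $v_\lambda$ solves $(Q_\lambda)$) gives in addition $\sup_{t>0}J_\lambda(tv_\lambda)=J_\lambda(v_\lambda)=m_\lambda$.

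For the dilation fibration a scaling computation gives $\gamma(t):=J_\lambda(v_t)=\tfrac{t^{N-2}}{2}a+Kt^{N}$ and $\gamma'(t)=t^{-1}P_\lambda(v_t)$. Thus if $K\ge0$ then $\gamma$ is unbounded above, while if $K<0$ then $\gamma$ has a unique positive maximum, attained at the point $t_*$ for which $v_{t_*}\in\mathcal P_\lambda$, with $\gamma(t)\to-\infty$. For the upper bound $\inf_{v\ne0}\sup_{t\ge0}J_\lambda(v_t)\le m_\lambda$ I would take $v=v_\lambda\in\mathcal P_\lambda$: the identity $P_\lambda(v_\lambda)=0$ forces $K(v_\lambda)=-\tfrac{N-2}{2N}\|\nabla v_\lambda\|_2^2<0$, so the maximum of $t\mapsto J_\lambda((v_\lambda)_t)$ lies at $t=1$, whence $\sup_{t>0}J_\lambda((v_\lambda)_t)=J_\lambda(v_\lambda)=m_\lambda$.

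The hard part is the matching lower bound $\inf_{v\ne0}\sup_{t\ge0}J_\lambda(v_t)\ge m_\lambda$, since the dilation fibre meets the Poho\v zaev manifold $\mathcal P_\lambda$ rather than $\mathcal N_\lambda$ and I see no direct comparison of $\inf_{\mathcal P_\lambda}J_\lambda$ with $m_\lambda$. Instead, fix $v\ne0$: if $K\ge0$ then $\sup_{t\ge0}J_\lambda(v_t)=+\infty\ge m_\lambda$; if $K<0$ then necessarily $c+\lambda^\sigma(d-b)>0$ — for otherwise, using $q>2$, one would get $K\ge\tfrac{\lambda^\sigma}{2}(b-d)-\tfrac1{2^*}c\ge\tfrac12 c-\tfrac1{2^*}c=\tfrac1N c>0$, a contradiction — and then $v_{t_0}\in\mathcal N_\lambda$ for $t_0:=\bigl(a/(c+\lambda^\sigma(d-b))\bigr)^{1/2}$, so $\sup_{t>0}J_\lambda(v_t)\ge J_\lambda(v_{t_0})\ge m_\lambda$. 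The case $\lambda=0$ is covered directly, since then both suprema equal $\tfrac1N\bigl(\|\nabla v\|_2^2/\|v\|_{2^*}^2\bigr)^{N/2}$ for every $v\in H^1(\R^N)\setminus\{0\}$, and taking the infimum gives $\tfrac1N S^{N/2}=m_0$ (with $S$ the best Sobolev constant) by density of $H^1(\R^N)$ in $D^{1,2}(\R^N)$. Combining these pieces yields $m_\lambda=\inf_{v\ne0}\sup_{t\ge0}J_\lambda(tv)=\inf_{v\ne0}\sup_{t\ge0}J_\lambda(v_t)$, and the ``in particular'' equalities follow by evaluating both fibrations at $v=v_\lambda\in\mathcal N_\lambda\cap\mathcal P_\lambda$ (so $t_{v_\lambda}=1$ for the first and $t_*=1$ for the second). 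I expect the only genuine obstacle to be this last step: the point to observe is that the dilation fibre of any $v$ is either energy–unbounded — precisely the case $K(v)\ge0$, in which it misses $\mathcal N_\lambda$ — or else crosses the Nehari manifold $\mathcal N_\lambda$.
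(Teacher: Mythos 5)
Your proposal is correct. Note that the paper does not actually prove this lemma: it is declared standard and the reader is referred to Jeanjean--Tanaka and to Li--Ma, where the least--energy level is identified with minimax values by constructing mountain--pass type paths along dilations. Your self-contained fibering argument is a legitimate alternative and is complete: the amplitude fiber $t\mapsto J_\lambda(tv)$ is handled by the usual strict monotonicity of $g'(t)/t$ (valid since $2<q<2^*$ and $\|v\|_{2^*}>0$), which gives $\inf_{v\ne 0}\sup_{t\ge0}J_\lambda(tv)=\inf_{\mathcal N_\lambda}J_\lambda=m_\lambda$ without any attainment issue; for the dilation fiber, the upper bound follows from $P_\lambda(v_\lambda)=0$, which forces $K(v_\lambda)=-\frac{N-2}{2N}\|\nabla v_\lambda\|_2^2<0$ and places the peak of $t\mapsto J_\lambda((v_\lambda)_t)$ at $t=1$. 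The genuinely useful point in your write-up is the lower bound: the observation that for any $v\ne0$ either $K(v)\ge0$, in which case $\sup_t J_\lambda(v_t)=+\infty$, or else the dilation fiber crosses $\mathcal N_\lambda$ at $t_0^2=\|\nabla v\|_2^2/\bigl(\|v\|_{2^*}^{2^*}+\lambda^\sigma(\|v\|_q^q-\|v\|_2^2)\bigr)$, so that $\sup_t J_\lambda(v_t)\ge J_\lambda(v_{t_0})\ge m_\lambda$. This cleanly avoids having to compare $\inf_{\mathcal P_\lambda}J_\lambda$ with $m_\lambda$, which is indeed the only delicate step; your verification that $K(v)<0$ forces $\|v\|_{2^*}^{2^*}+\lambda^\sigma(\|v\|_q^q-\|v\|_2^2)>0$ (using $q>2$ and $\tfrac12-\tfrac1{2^*}=\tfrac1N$) is correct. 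The separate treatment of $\lambda=0$, where both suprema reduce to $\tfrac1N\bigl(\|\nabla v\|_2^2/\|v\|_{2^*}^2\bigr)^{N/2}$ and density of $H^1(\R^N)$ in $D^{1}(\R^N)$ gives the value $\tfrac1N S^{N/2}=m_0$, is also fine. The only external inputs you invoke --- that $v_\lambda\in\mathcal N_\lambda\cap\mathcal P_\lambda$ and $J_\lambda(v_\lambda)=m_\lambda$ --- are exactly what the paper establishes in the discussion immediately preceding the lemma, so the ``in particular'' clause is justified.
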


\begin{proof} 
The proof is standard and thus omitted.  We refer the reader to \cite[Theorem 1.1]{Li-1}, or to \cite{Jeanjean-1}.
\end{proof}

\begin{lemma}\label{l24}
Let $\lambda>0$.
The rescaled family of ground states $\{v_\lambda\}$ is bounded in $H^1(\mathbb R^N)$. In particular, $\{v_\lambda\}$ is bounded in $L^p(\mathbb R^N)$ uniformly for all $p\in [2,2^*]$.
\end{lemma}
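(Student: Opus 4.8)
\proof
The plan is to combine the scale invariance of the Dirichlet energy under the rescaling \eqref{e22} with the Nehari and Poho\v zaev identities satisfied by $v_\lambda$. By Lemma~\ref{l22}$(a)$ one has $\|\nabla v_\lambda\|_2^2=\|\nabla u_\lambda\|_2^2$, so Lemma~\ref{l21} already provides a uniform bound $\|\nabla v_\lambda\|_2\le K$ with $K$ independent of $\lambda$. The quantity not controlled by scaling alone is $\|v_\lambda\|_2$: from Lemma~\ref{l22}$(b)$ we have $\lambda^\sigma\|v_\lambda\|_2^2=\|u_\lambda\|_2^2$, and since $\lambda^{-\sigma}\to\infty$ as $\lambda\to 0$ the boundedness of $\{u_\lambda\}$ in $H^1$ does not by itself yield a bound on $\|v_\lambda\|_2$.

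To bound $\|v_\lambda\|_2$ I would use that $v_\lambda$, being a solution of $(Q_\lambda)$, lies on the Nehari manifold $\mathcal N_\lambda$ and satisfies the Poho\v zaev identity \eqref{e26}. Multiplying \eqref{e26} by $2^*$ and subtracting it from the Nehari identity $\|\nabla v_\lambda\|_2^2+\lambda^\sigma\|v_\lambda\|_2^2=\|v_\lambda\|_{2^*}^{2^*}+\lambda^\sigma\|v_\lambda\|_q^q$ cancels both the gradient term and the $\|v_\lambda\|_{2^*}^{2^*}$ term; dividing the result by $\lambda^\sigma$ leaves
\[
\Big(\tfrac{2^*}{2}-1\Big)\|v_\lambda\|_2^2=\Big(\tfrac{2^*}{q}-1\Big)\|v_\lambda\|_q^q,
\qquad\text{equivalently}\qquad
\|v_\lambda\|_2^2=\frac{(N-2)(2^*-q)}{2q}\,\|v_\lambda\|_q^q ,
\]
with both coefficients strictly positive since $2<q<2^*$. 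It therefore suffices to bound $\|v_\lambda\|_q$.

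For this last step I would invoke the Gagliardo--Nirenberg inequality $\|v\|_q\le C_{N,q}\|\nabla v\|_2^{\beta}\|v\|_2^{1-\beta}$ with $\beta=\tfrac{N(q-2)}{2q}\in(0,1)$ (valid since $2<q<2^*$), for every $v\in H^1(\R^N)$. Inserting $\|v_\lambda\|_2=\big(\tfrac{(N-2)(2^*-q)}{2q}\big)^{1/2}\|v_\lambda\|_q^{q/2}$ from the identity above and using $\|\nabla v_\lambda\|_2\le K$, one obtains an inequality of the form $\|v_\lambda\|_q\le C\,\|v_\lambda\|_q^{q(1-\beta)/2}$ with $C=C(N,q,K)$; since a direct computation gives $1-\tfrac{q(1-\beta)}{2}=\tfrac{(N-2)(q-2)}{4}>0$, this is a genuine a priori bound, so $\|v_\lambda\|_q$ is bounded uniformly in $\lambda$. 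The displayed identity then bounds $\|v_\lambda\|_2$, hence $\{v_\lambda\}$ is bounded in $H^1(\R^N)$; the Sobolev inequality bounds $\|v_\lambda\|_{2^*}$, and the interpolation $\|v_\lambda\|_p\le\|v_\lambda\|_2^{1-\theta}\|v_\lambda\|_{2^*}^{\theta}\le\max\{\|v_\lambda\|_2,\|v_\lambda\|_{2^*}\}$ gives the bound in $L^p$ uniform for $p\in[2,2^*]$.

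The only real subtlety is the one just highlighted: scale invariance controls $\|\nabla v_\lambda\|_2$ but not the mass $\|v_\lambda\|_2$, so the latter must be recovered from the structural identities, and the Gagliardo--Nirenberg step closes precisely because the exponent $q(1-\beta)$ is strictly less than $2$ — which, together with positivity of the coefficient above, is exactly the condition $2<q<2^*$. The remaining manipulations are routine.
\qed
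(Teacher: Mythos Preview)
Your proof is correct and follows essentially the same route as the paper: combine the Nehari and Poho\v zaev identities to obtain $\|v_\lambda\|_2^2=\frac{2(2^*-q)}{q(2^*-2)}\|v_\lambda\|_q^q$, then close the loop against the already-bounded gradient norm via an interpolation-type inequality. The only cosmetic difference is that you invoke Gagliardo--Nirenberg directly, whereas the paper splits this into $L^2$--$L^{2^*}$ interpolation followed by the Sobolev inequality; the paper's version keeps the constants explicit (tracking $\left(\tfrac{2(2^*-q)}{q(2^*-2)}\right)^{\frac{2^*-2}{q-2}}$ for later use in Remark~\ref{r21}), but for the lemma as stated your argument is equivalent.
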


\begin{proof}
Since $\|\nabla v_\lambda\|_2=\|\nabla u_\lambda\|_2$ is bounded by Lemma \ref{l21} and Lemma \ref{l22}, we need only to show that $v_\lambda$ is bounded in $L^2(\mathbb R^N)$.  Since $v_\lambda\in \mathcal N_\lambda\cap \mathcal P_\lambda$, we have 
$$
\int_{\mathbb R^N}|\nabla v_\lambda|^2+\lambda^\sigma\int_{\mathbb R^N}|v_\lambda|^2-\int_{\mathbb R^N}|v_\lambda|^{2^*}-\lambda^\sigma\int_{\mathbb R^N}|v_\lambda|^q=0,
$$
and 
$$
\frac{1}{2^*}\int_{\mathbb R^N}|\nabla v_\lambda|^2+\frac{\lambda^\sigma}{2}\int_{\mathbb R^N}|v_\lambda|^2-\frac{1}{2^*}\int_{\mathbb R^N}|v_\lambda|^{2^*}-\frac{\lambda^\sigma}{q}\int_{\mathbb R^N}|v_\lambda|^q=0.
$$
It then follows that 
$$
\left(\frac{1}{2}-\frac{1}{2^*}\right)\lambda^\sigma\int_{\mathbb R^N}|v_\lambda|^2=\left(\frac{1}{q}-\frac{1}{2^*}\right)\lambda^\sigma\int_{\mathbb R^N}|v_\lambda|^q.
$$
Thus, we obtain
\begin{equation}\label{e28}
\int_{\mathbb R^N}|v_\lambda|^2=\frac{2(2^*-q)}{q(2^*-2)}\int_{\mathbb R^N}|v_\lambda|^q.
\end{equation}
By the Sobolev embedding theorem  and the  interpolation inequality, we obtain 
$$
\int_{\mathbb R^N}|v_\lambda|^q\le \left(\int_{\mathbb R^N}|v_\lambda|^2\right)^{\frac{2^*-q}{2^*-2}}\left(\int_{\mathbb R^N}|v_\lambda|^{2^*}\right)^{\frac{q-2}{2^*-2}}
\le \left(\int_{\mathbb R^N}|v_\lambda|^2\right)^{\frac{2^*-q}{2^*-2}}\left(\frac{1}{S}\int_{\mathbb R^N}|\nabla v_\lambda|^2\right)^{\frac{2^*(q-2)}{2(2^*-2)}},
$$
where $S$ is the best Sobolev constant.  Therefore,  we have 
$$
\left(\int_{\mathbb R^N}|v_\lambda|^2\right)^{\frac{q-2}{2^*-2}}\le \frac{2(2^*-q)}{q(2^*-2)}\left(\frac{1}{S}\int_{\mathbb R^N}|\nabla v_\lambda|^2\right)^{\frac{2^*(q-2)}{2(2^*-2)}}.
$$
It then follows from Lemma \ref{l22} that
\begin{equation}\label{e29}
\int_{\mathbb R^N}|v_\lambda|^2\le \left(\frac{2(2^*-q)}{q(2^*-2)}\right)^{\frac{2^*-2}{q-2}}\left(\frac{1}{S}\int_{\mathbb R^N}|\nabla u_\lambda|^2\right)^{2^*/2},
\end{equation}
which together with the boundedness of $u_\lambda$ in $H^1(\mathbb R^N)$ implies that $v_\lambda$ is bounded in $L^2(\mathbb R^N)$.  

Finally, for any $p\in [2,2^*]$, by \eqref{e29} and the interpolation inequality, we have 
$$
\int_{\mathbb R^N}|v_\lambda|^p\le \left(\int_{\mathbb R^N}|v_\lambda|^2\right)^{\frac{2^*-p}{2^*-2}}\left(\int_{\mathbb R^N}|v_\lambda|^{2^*}\right)^{\frac{p-2}{2^*-2}}\le \left(\frac{2(2^*-q)}{q(2^*-2)}\right)^{\frac{2^*-p}{q-2}}\left(\frac{1}{S}\int_{\mathbb R^N}|\nabla u_\lambda|^2\right)^{2^*/2},
$$
and 
$$
\lim_{q\to 2}\left(\frac{2(2^*-q)}{q(2^*-2)}\right)^{\frac{2^*-p}{q-2}}=e^{-N(2^*-p)/4},  \quad {\rm for \ any } \  p\in [2,2^*].
$$
Therefore, by Lemma \ref{l21}, $\{v_\lambda\}$ is bounded  in $L^p(\mathbb R^N)$  uniformly for $p\in [2,2^*]$. 
\end{proof} 

\begin{remark}\label{r21}
A straightforward computation shows that
$$
\lim_{q\to 2}\left(\frac{2}{q}\right)^{\frac{2^*-2}{q-2}}=e^{-\frac{2}{N-2}}, \qquad 
\lim_{q\to 2}\left(\frac{2^*-q}{2^*-2}\right)^{\frac{2^*-2}{q-2}}=e^{-1}
$$
and 
$$
  \lim_{q\to 2^*}\frac{1}{2^*-q}\left(\frac{2^*-q}{2^*-2}\right)^{\frac{2^*-2}{q-2}}=\frac{N-2}{4}.
$$
Therefore, we have 
$$
\left(\frac{2(2^*-q)}{q(2^*-2)}\right)^{\frac{2^*-2}{q-2}}\sim 2^*-q.
$$
\end{remark}

Next we obtain an estimation of the least energy.

\begin{lemma}\label{l26}
Let 
\begin{equation}\label{e210}
Q(q):= \left(\frac{2^*-q}{2^*-2}\right)^{\frac{2^*-q}{q-2}}  \quad      and     \qquad    G(q):=\frac{q-2}{2^*-2}Q(q).
\end{equation}
Then $Q(q)\sim 1$, $G(q)\sim q-2$ 
and for all $\lambda>0$:
\smallskip

$(i)$  \   $1<\tau(v_\lambda)\le 1+G(q)\lambda^\sigma$, 
%for any ground states $v_\lambda\in H^1(\mathbb R^N)\setminus\{0\}$. 

$(ii)$  \   $m_0>m_\lambda> m_0\left(1-\lambda^\sigma NG(q)(1+G(q)\lambda^\sigma)^{\frac{N-2}{2}}\right)$.
\end{lemma}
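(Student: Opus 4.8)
The plan is to prove the three assertions in turn, using throughout that $v_\lambda\in\mathcal N_\lambda\cap\mathcal P_\lambda$ together with the identity \eqref{e28} already derived in the proof of Lemma \ref{l24}. The asymptotics of $Q$ and $G$ are immediate: for $q\in(2,2^*)$ the base $\tfrac{2^*-q}{2^*-2}$ lies in $(0,1)$ and the exponent $\tfrac{2^*-q}{q-2}$ is positive, so $0<Q(q)<1$; by the limits recorded in Remark \ref{r21} the function $Q$ extends continuously and strictly positively to the compact interval $[2,2^*]$, hence is bounded below by a positive constant, i.e. $Q(q)\sim1$, and therefore $G(q)=\tfrac{q-2}{2^*-2}Q(q)\sim q-2$.

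For $(i)$, I start from the Nehari identity for $v_\lambda\in\mathcal N_\lambda$, which after substituting \eqref{e28} becomes $\|\nabla v_\lambda\|_2^2-\|v_\lambda\|_{2^*}^{2^*}=\lambda^\sigma\tfrac{2^*(q-2)}{q(2^*-2)}\|v_\lambda\|_q^q>0$; in particular $\tau(v_\lambda)>1$, and dividing by $\|v_\lambda\|_{2^*}^{2^*}$ gives $\tau(v_\lambda)-1=\lambda^\sigma\tfrac{2^*(q-2)}{q(2^*-2)}\,\|v_\lambda\|_q^q/\|v_\lambda\|_{2^*}^{2^*}$. To control the last ratio I interpolate $\|v_\lambda\|_q$ between $L^2$ and $L^{2^*}$ and eliminate $\|v_\lambda\|_2$ by \eqref{e28}; raising the resulting inequality to the power $\tfrac{2^*-2}{q-2}$ yields $\|v_\lambda\|_q^q\le\big(\tfrac{2(2^*-q)}{q(2^*-2)}\big)^{\frac{2^*-q}{q-2}}\|v_\lambda\|_{2^*}^{2^*}$, hence $\tau(v_\lambda)-1\le\lambda^\sigma\,\tfrac{2^*}{q}\big(\tfrac2q\big)^{\frac{2^*-q}{q-2}}G(q)$. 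So it remains to verify the elementary inequality $\tfrac{2^*}{q}\big(\tfrac2q\big)^{\frac{2^*-q}{q-2}}\le1$ on $(2,2^*)$; writing $a=q-2$, $b=2^*-q$ and taking logarithms, this is equivalent to $\tfrac ba\ln(1+\tfrac a2)\ge\ln\!\big(1+\tfrac b{2+a}\big)$, which follows by multiplying the concavity estimate $\tfrac1a\ln(1+\tfrac a2)\ge\tfrac1{2+a}$ (equivalently $(1+u)\ln(1+u)\ge u$) by $b$ and then applying $\ln(1+x)\le x$ with $x=\tfrac b{2+a}$. This proves $1<\tau(v_\lambda)\le1+G(q)\lambda^\sigma$.

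For $(ii)$, combining \eqref{e24} with the Poho\v zaev identity \eqref{e26} yields $m_\lambda=J_\lambda(v_\lambda)=\tfrac1N\|\nabla v_\lambda\|_2^2$, and likewise $m_0=\tfrac1N\|\nabla U_1\|_2^2$. Recalling that $(\tau(v_\lambda))^{\frac{N-2}{4}}v_\lambda\in\mathcal N_0$, we get $m_0\le J_0\big((\tau(v_\lambda))^{\frac{N-2}{4}}v_\lambda\big)=\tau(v_\lambda)^{\frac{N-2}{2}}m_\lambda$; inserting the bound from $(i)$ gives $m_\lambda\ge m_0(1+G(q)\lambda^\sigma)^{-\frac{N-2}{2}}$, and the elementary estimate $(1+x)^{-\frac{N-2}{2}}>1-\tfrac{N-2}{2}x\ge1-Nx(1+x)^{\frac{N-2}{2}}$ for $x>0$ (the last step since $\tfrac{N-2}{2}<N\le N(1+x)^{\frac{N-2}{2}}$) delivers the stated lower bound $m_\lambda>m_0\big(1-\lambda^\sigma NG(q)(1+G(q)\lambda^\sigma)^{\frac{N-2}{2}}\big)$. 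The strict upper bound $m_\lambda<m_0$ is the standard ``energy below the Sobolev threshold'' fact: by Lemma \ref{l22}$(c)$, $m_\lambda=m_\lambda^*$, and $m_\lambda^*<m_0^*=m_0$ is precisely the strict inequality underpinning the existence of $u_\lambda$ in Theorem \ref{t0}, obtained by testing $J_\lambda$ along a dilated (and, for $N=3,4$, truncated) Aubin--Talenti bubble $U_\rho$ and optimising in the dilation, the subcritical $L^q$--term beating the $L^2$--term as the concentration scale $\rho\to0$ because $q>2$ forces $N\tfrac{2^*-q}{2^*}<2$.

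The main obstacle is the sharp constant $G(q)$ in $(i)$: obtaining exactly $G(q)$, rather than a larger multiple of $q-2$, hinges on the one-variable inequality $\tfrac{2^*}{q}\big(\tfrac2q\big)^{\frac{2^*-q}{q-2}}\le1$; once this is in hand the rest, including $(ii)$, is routine, and the ingredient $m_\lambda<m_0$ can simply be quoted from the proof of Theorem \ref{t0}.
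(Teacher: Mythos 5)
Your argument is correct, and in two places it takes a genuinely different (and in one case cleaner) route than the paper. For part $(i)$ the paper bounds the ratio $\big(\|v_\lambda\|_q^q-\|v_\lambda\|_2^2\big)/\|v_\lambda\|_{2^*}^{2^*}$ by interpolating and then maximizing the one-variable function $\zeta\mapsto\zeta^{\theta_q}-\zeta$ over all $\zeta\ge0$, which produces the constant $G(q)$ with no further work; you instead eliminate $\|v_\lambda\|_2$ via \eqref{e28}, bound $\|v_\lambda\|_q^q/\|v_\lambda\|_{2^*}^{2^*}$, and then must prove the extra scalar inequality $\tfrac{2^*}{q}\big(\tfrac2q\big)^{\frac{2^*-q}{q-2}}\le1$ — your reduction to $\ln(1+u)\ge u/(1+u)$ and $\ln(1+x)\le x$ is valid, so the same sharp constant $G(q)$ comes out. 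For part $(ii)$ the paper runs the dilation path $(v_\lambda)_t$ through Lemma \ref{l23} and re-uses the interpolation estimate, arriving at the chain \eqref{e210RR}; you instead project by scalar multiplication, using that $\tau(v_\lambda)^{\frac{N-2}{4}}v_\lambda\in\mathcal N_0$ and that $J_0=\tfrac1N\|\nabla\cdot\|_2^2$ on $\mathcal N_0$, which gives the cleaner and slightly stronger inequality $m_0\le\tau(v_\lambda)^{\frac{N-2}{2}}m_\lambda$; the stated lower bound then follows from $(i)$ and your elementary estimates, so your route simplifies the paper's bookkeeping. Two small caveats: the limits you need for $Q(q)\sim1$ (exponent $\tfrac{2^*-q}{q-2}$) are the ones computed in the proof of Lemma \ref{l210}, not those of Remark \ref{r21} (exponent $\tfrac{2^*-2}{q-2}$) — a harmless misattribution since the claim is elementary; and the strict inequality $m_\lambda<m_0$ is quoted from the existence theory (the strict threshold $m_\lambda^*<\tfrac1NS^{N/2}=m_0$, cf.\ Lemma \ref{l211}) rather than proved — this is acceptable, and indeed the paper's own proof of Lemma \ref{l26} does not argue it either, establishing it quantitatively only later in Lemmas \ref{l27}--\ref{l28} for small $\lambda$.
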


\begin{proof}
For $\theta\in (0,1)$, consider the function 
$$
g(x):=x^\theta\left(1-x^{1-\theta}\right), \qquad x\in [0, +\infty).
$$
It is easy to see that 
$$
\max_{x\ge 0}g(x)= \theta^{\frac{\theta}{1-\theta}}(1-\theta).
$$
Using the interpolation inequality,
$$
\int_{\mathbb R^N}|v_\lambda|^q\le \left(\int_{\mathbb R^N}|v_\lambda|^2\right)^{\frac{2^*-q}{2^*-2}}\left(\int_{\mathbb R^N}|v_\lambda|^{2^*}\right)^{\frac{q-2}{2^*-2}}, 
$$
we see that 
\begin{equation}\label{e210R}
\frac{\int_{\mathbb R^N}|v_\lambda|^q-\int_{\mathbb R^N}|v_\lambda|^2}{\int_{\mathbb R^N}|v_\lambda|^{2^*}}\le \zeta_\lambda^{\theta_q}(1-\zeta_\lambda^{1-\theta_q})\le \theta_q^{\frac{\theta_q}{1-\theta_q}}(1-\theta_q)=G(q), 
\end{equation}
where 
$$
\theta_q=\frac{2^*-q}{2^*-2}, \qquad \zeta_\lambda=\frac{\int_{\mathbb R^N}|v_\lambda|^2}{\int_{\mathbb R^N}|v_\lambda|^{2^*}}.
$$
Since $v_\lambda\in \mathcal N_\lambda$,  by \eqref{e28} and \eqref{e210R}, we have 
$$
1<\tau(v_\lambda)=\frac{\int_{\mathbb R^N}|\nabla v_\lambda|^2}{\int_{\mathbb R^N}|v_\lambda|^{2^*}}=1+\lambda^\sigma\frac{\int_{\mathbb R^N}|v_\lambda|^q-\int_{\mathbb R^N}|v_\lambda|^2}{\int_{\mathbb R^N}|v_\lambda|^{2^*}}\le 1+\lambda^\sigma G(q).
$$
This proves $(i)$.  To prove $(ii)$, we  first note that by \eqref{e28} and \eqref{e210R} the following inequality holds
$$
\frac{1}{q}\int_{\mathbb R^N}|v_\lambda|^q-\frac{1}{2}\int_{\mathbb R^N}|v_\lambda|^2\le \int_{\mathbb R^N}|v_\lambda|^q-\int_{\mathbb R^N}|v_\lambda|^2\le G(q)\int_{\mathbb R^N}|v_\lambda|^{2^*}.
$$
 Since $v_\lambda\in \mathcal{N}_\lambda$, by  \eqref{e28}, we also have
\begin{align}
m_\lambda&=(\frac{1}{2}-\frac{1}{2^*})\int_{\mathbb R^N}|\nabla v_\lambda|^2+(\frac{1}{2}-\frac{1}{2^*})\lambda^\sigma\int_{\mathbb R^N}|v_\lambda|^2-(\frac{1}{q}-\frac{1}{2^*})\lambda^\sigma\int_{\mathbb R^N}|v_\lambda|^q\nonumber\\
&=\frac{1}{N}\int_{\mathbb R^N}|\nabla v_\lambda|^2.\nonumber
\end{align}
Therefore, by Lemma \ref{l23} and the definition of $\tau(v_\lambda)$, we find
\begin{align}\label{e210RR}
m_0&\le \sup_{t\ge 0} J_\lambda((v_\lambda)_t)+\lambda^\sigma (\tau(v_\lambda))^{N/2}\left[\frac{1}{q}\int_{\mathbb R^N}|v_\lambda|^q-\frac{1}{2}\int_{\mathbb R^N}|v_\lambda|^2\right]\\
&\le m_\lambda +\lambda^\sigma(\tau(v_\lambda))^{\frac{N}{2}}\int_{\mathbb R^N}| v_\lambda|^{2^*}G(q)\nonumber\\
&\le m_\lambda+\lambda^\sigma(\tau(v_\lambda))^{\frac{N-2}{2}}\int_{\mathbb R^N}|\nabla v_\lambda|^2G(q)\nonumber\\
&\le m_\lambda\left[1+\lambda^\sigma NG(q)(1+G(q)\lambda^\sigma)^\frac{N-2}{2}\right].\nonumber
\end{align}
Hence, we obtain
$$
m_\lambda\ge \frac{m_0}{1+\lambda^\sigma NG(q)(1+G(q)\lambda^\sigma)^\frac{N-2}{2}}>m_0\left[ 1-\lambda^\sigma NG(q)(1+G(q)\lambda^\sigma)^\frac{N-2}{2}\right],
$$
which completes the proof. 
\end{proof}

\begin{lemma}\label{l27}
Assume $N\ge 5$.  Then there exists a constant $c_0>0$, which is independent of $q$, $\lambda$, and such that for all small $\lambda>0$,
$$m_\lambda\le m_0-\lambda^\sigma \left\{\frac{c_0}{q}\left(\frac{2}{q}\right)^{\frac{2^*-q}{q-2}}G(q)
      -\lambda^\sigma\frac{2Nm_0}{q-2}G(q)^2\right\}.
$$
 \end{lemma}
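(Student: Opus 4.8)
The plan is to produce an explicit competitor in the Nehari manifold $\mathcal N_\lambda$ built from a Talenti bubble $U_\rho$, and to optimise over the scaling parameter $\rho$. Since $U_\rho\in D^1(\mathbb R^N)$ but, for $N\ge 5$, also $U_\rho\in L^2(\mathbb R^N)\cap L^q(\mathbb R^N)$, the bubble itself (after truncation is not even needed here, unlike in $N=3,4$) is an admissible test function, and this is exactly why the hypothesis $N\ge 5$ appears. First I would compute, for fixed $\rho>0$, the three relevant integrals of $U_\rho$: $\|\nabla U_\rho\|_2^2=\|\nabla U_1\|_2^2=S^{N/2}$ (scaling-invariant), $\|U_\rho\|_{2^*}^{2^*}=\|U_1\|_{2^*}^{2^*}=S^{N/2}$, $\|U_\rho\|_2^2=\rho^2\|U_1\|_2^2$, and $\|U_\rho\|_q^q=\rho^{N-\frac{(N-2)q}{2}}\|U_1\|_q^q$. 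Note all of these are finite precisely because $N\ge 5$ forces $2<\frac{2N}{N-2}<\cdots$ so that $U_1\in L^2$; write $a_2:=\|U_1\|_2^2$, $a_q:=\|U_1\|_q^q$, and let $\gamma:=\frac{2(q-2)}{2^*-2}\in(0,2)$ denote the power so that $\|U_\rho\|_q^q=\rho^{\,\gamma\cdot\frac{2^*-2}{2}\cdot\text{(something)}}$; I would pin down the exponent carefully, but the point is it is a positive power of $\rho$ strictly less than $2$.

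**Next** I would evaluate $\sup_{t\ge 0}J_\lambda(tU_\rho)$ using Lemma~\ref{l23}, which gives $m_\lambda\le\sup_{t\ge 0}J_\lambda(tU_\rho)$. The map $t\mapsto J_\lambda(tU_\rho)$ has the form $\frac{t^2}{2}(A+\lambda^\sigma B)-\frac{t^{2^*}}{2^*}C-\frac{\lambda^\sigma t^q}{q}D$ with $A=C=S^{N/2}$, $B=\rho^2 a_2$, $D=\rho^{\theta} a_q$ for the appropriate $\theta$. Since $\lambda^\sigma$ is small, I would expand the maximiser $t_\lambda=1+O(\lambda^\sigma)$ around the $\lambda=0$ maximiser $t_0=1$ (where $\sup_t J_0(tU_\rho)=\frac1N S^{N/2}=m_0$), and Taylor-expand to second order in $\lambda^\sigma$. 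The first-order term in $\lambda^\sigma$ is $\frac12\rho^2 a_2-\frac1q\rho^\theta a_q$ (evaluated at $t=1$, by the envelope theorem the $\frac{dt}{d\lambda^\sigma}$ contribution vanishes to leading order since we are at a critical point of $t\mapsto J_0(tU_\rho)$), and the second-order correction is $O(\lambda^{2\sigma})$ with a constant controlled by $\rho^{2\theta}a_q^2/(q-2)$ — this is where the $\frac{1}{q-2}$ and the $G(q)^2$ in the claimed bound come from. So I would obtain
\[
m_\lambda\le m_0-\lambda^\sigma\Big(\tfrac1q\rho^\theta a_q-\tfrac12\rho^2 a_2\Big)+O\!\Big(\lambda^{2\sigma}\tfrac{\rho^{2\theta}a_q^2}{q-2}\Big).
\]

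**Then** I would choose $\rho$ to make the bracket $\frac1q\rho^\theta a_q-\frac12\rho^2 a_2$ positive and of the right size: since $\theta<2$, for small $\rho$ the term $\rho^\theta$ dominates $\rho^2$, so taking $\rho$ a small fixed constant (independent of $\lambda$) makes the bracket a positive constant times $a_q\sim G(q)$ up to $q$-uniform constants; tracking the $q$-dependence via Remark~\ref{r21} (which identifies $a_q=\|U_1\|_q^q$ with the relevant constants up to $\sim$) turns the bracket into $\frac{c_0}{q}\big(\frac2q\big)^{\frac{2^*-q}{q-2}}G(q)$ for a suitable absolute $c_0>0$, and the error term into $\lambda^\sigma\frac{2Nm_0}{q-2}G(q)^2$ after using $m_0=\frac1N S^{N/2}$ and absorbing constants. **The main obstacle** I anticipate is the bookkeeping of the $q$-uniform constants: one must verify that every implied constant in the Taylor expansion (in particular the second-order remainder) is bounded independently of $q\in(2,2^*)$, which requires care near the endpoints $q\to 2$ and $q\to 2^*$ and is exactly what Remark~\ref{r21} is set up to handle. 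A secondary technical point is justifying the envelope-theorem step rigorously — i.e. controlling $t_\lambda-1$ and showing its contribution is genuinely second order with a $\frac{1}{q-2}$-type constant — which amounts to an implicit-function-theorem estimate on the critical-point equation for $t\mapsto J_\lambda(tU_\rho)$, uniform in the small parameter $\lambda^\sigma$ and in $q$.
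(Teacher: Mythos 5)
Your overall skeleton is the same as the paper's: use the minimax characterisation of Lemma \ref{l23} with test functions $tU_\rho$ (admissible for $N\ge 5$ since $U_\rho\in L^2\cap L^q$), note that the $\lambda^\sigma$-perturbation enters through $\frac{t^2}{2}\|U_\rho\|_2^2-\frac{t^q}{q}\|U_\rho\|_q^q$, and control the maximiser $t_\lambda$ near $1$ so that the correction is second order in $\lambda^\sigma$. However, the two places where the actual work lies are exactly the places where your bookkeeping breaks down. First, your justification of the first-order gain is wrong: $\|U_1\|_q^q$ is \emph{not} $\sim G(q)$ — it tends to $\|U_1\|_2^2>0$ as $q\to 2$, while $G(q)\sim q-2\to 0$. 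The smallness (in $q$) of the gain comes from the cancellation between $\frac1q\|U_\rho\|_q^q$ and $\frac12\|U_\rho\|_2^2$, not from the size of $\|U_1\|_q^q$. The paper captures this by genuinely maximising $g_0(\rho)=\frac1q\|U_\rho\|_q^q-\frac12\|U_\rho\|_2^2$ over $\rho$ (your first paragraph announces this, but your third paragraph abandons it for a fixed small $\rho$): the maximum equals
\[
\frac1q\Big(\frac2q\Big)^{\frac{2^*-q}{q-2}}G(q)\left(\frac{\|U_1\|_q^{q(2^*-2)}}{\|U_1\|_2^{2(2^*-q)}}\right)^{\frac{1}{q-2}},
\]
so the factor $G(q)$ appears automatically, and the whole content of the $q$-uniformity is the inequality $c_0:=\inf_{q\in(2,2^*)}\big(\|U_1\|_q^{q(2^*-2)}/\|U_1\|_2^{2(2^*-q)}\big)^{1/(q-2)}>0$, proved by a dominated-convergence computation of the $q\to2$ limit — this is the step where $N\ge5$ is really used, and it is entirely missing from your proposal (your appeal to Remark \ref{r21} is a misattribution: that remark concerns $\big(\tfrac{2(2^*-q)}{q(2^*-2)}\big)^{\frac{2^*-2}{q-2}}$, not $\|U_1\|_q^q$). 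A fixed-$\rho$ variant could conceivably be made uniform in $q$, but that would require its own argument, and the one you give does not supply it.

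Second, your error-term estimate is quantitatively off: a bound of the form $\lambda^{2\sigma}\rho^{2\theta}\|U_1\|_q^{2q}/(q-2)$ does not give the claimed $\lambda^{2\sigma}\frac{2Nm_0}{q-2}G(q)^2$, because near $q=2$ it is of size $(q-2)^{-1}$ rather than $(q-2)$; what must be squared is the \emph{difference} $\int|U_0|^q-\int|U_0|^2$, which by the interpolation inequality (as in \eqref{e210R}) is at most $G(q)\int|U_0|^{2^*}=G(q)\,Nm_0$. The paper also avoids the envelope/implicit-function expansion you flag as delicate: it shows directly that $\big(1-\lambda^\sigma A_\lambda\big)^{1/(q-2)}<t_\lambda<1$ with $A_\lambda\le\frac{1}{Nm_0}\big[\int|U_0|^q-\int|U_0|^2\big]$, and then uses the monotonicity of $g(t)=\frac{t^2}{2}\int|U_0|^2-\frac{t^q}{q}\int|U_0|^q$ on $(t_0,1)$ together with a mean value estimate for $h(x)=g((1-x)^{1/(q-2)})$; this yields the stated error constant with the $G(q)^2/(q-2)$ structure and with all constants uniform in $q$, as the lemma requires. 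So while your route is the right one in outline, as written it would not produce the lemma's inequality: you need the maximisation in $\rho$ (or an equivalent uniform lower bound) plus the $c_0$-infimum argument, and the interpolation bound on the difference in the quadratic error term.
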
   
\begin{proof}
For each $\rho>0$, the family  $\{U_\rho\}$ of radial ground states of $(Q_0)$ defined in \eqref{e24+} verifies
\begin{equation}\label{e211}
 \|U_\rho\|_2^2=\rho^2\|U_1\|_2^2, \qquad \|U_\rho\|_q^q=\rho^{\frac{2(2^*-q)}{2^*-2}}\|U_1\|_q^q.
\end{equation}
 Let $g_0(\rho)=\frac{1}{q}\int_{\mathbb R^N}|U_\rho|^q-\frac{1}{2}\int_{\mathbb R^N}|U_\rho|^2$. Then there exists a unique $\rho_0=\rho_0(q)\in (0,+\infty)$ given by 
 $$
 \rho_0=\left(\frac{2(2^*-q)}{q(2^*-2)}\cdot \frac{\|U_1\|_q^q}{\|U_1\|_2^2}\right)^{\frac{2^*-2}{2(q-2)}}, 
 $$
 such that 
 \begin{equation}\label{e212}
 g_0(\rho_0)=\sup_{\rho>0}g_0(\rho)=\frac{1}{q}\left(\frac{2}{q}\right)^{\frac{2^*-q}{q-2}}G(q)\left(\frac{\|U_1\|_q^{q(2^*-2)}}{\|U_1\|_2^{2(2^*-q)}}\right)^{\frac{1}{q-2}}.
 \end{equation}
Since $N\ge 5$, by using the Lebesgue Dominated Convergence Theorem, it is not hard to show that
$$
\lim_{q\to 2}\left(\frac{\|U_1\|_q^{q(2^*-2)}}{\|U_1\|_2^{2(2^*-q)}}\right)^{\frac{1}{q-2}}=\exp\left(\frac{2\int_0^\infty\kappa(r)\ln\frac{1}{1+r^2}dr}
{\int_0^\infty\kappa(r)dr}\right)\cdot
\int_0^\infty\kappa(r)dr, 
$$
where $\kappa(r)=(1+r^2)^{2-N}r^{N-1}$. Therefore, we conclude that
\begin{equation}\label{e213}
c_0:=\inf_{q\in (2, 2^*)}\left(\frac{\|U_1\|_q^{q(2^*-2)}}{\|U_1\|_2^{2(2^*-q)}}\right)^{\frac{1}{q-2}}>0.
\end{equation}
Thus, we get
$$
 g_0(\rho_0)\ge 
 \frac{c_0}{q}\left(\frac{2}{q}\right)^{\frac{2^*-q}{q-2}}G(q). 
  $$  
Put $U_0(x):=U_{\rho_0}(x)$, then by Lemma \ref{l23}, we have 
\begin{align}\label{e214}
m_\lambda&\le \sup_{t\ge 0}J_\lambda(tU_0)=J_\lambda(t_\lambda U_0)\\
&=\frac{t^2_\lambda}{2}\int_{\mathbb R^N}|\nabla U_0|^2-\frac{t^{2^*}_\lambda}{2^*}\int_{\mathbb R^N}|U_0|^{2^*}+\lambda^\sigma\int_{\mathbb R^N}\frac{t^2_\lambda}{2}|U_0|^2-\frac{t^q_\lambda}{q}|U_0|^q\nonumber\\
&\le \sup_{t\ge 0}\left(\frac{t^2}{2}-\frac{t^{2^*}}{2^*}\right)\int_{\mathbb R^N}|\nabla U_0|^2 +\lambda^\sigma
\int_{\mathbb R^N}\frac{t_\lambda^2}{2}|U_0|^2-\frac{t_\lambda^q}{q}|U_0|^q\nonumber\\
&= m_0+\lambda^\sigma\int_{\mathbb R^N}\frac{t_\lambda^2}{2}|U_0|^2-\frac{t_\lambda^q}{q}|U_0|^q.\nonumber
\end{align}
It follows from $\frac{d}{dt}J_\lambda(tU_0)\big|_{t=t_\lambda}=0$ and $\int_{\mathbb R^N}|\nabla U_0|^2=\int_{\mathbb R^N}|U_0|^{2^*}=Nm_0$ that
$$
Nm_0+\lambda^\sigma \int_{\mathbb R^N}|U_0|^2=t_\lambda^{2^*-2}Nm_0+t_\lambda^{q-2}\lambda^\sigma\int_{\mathbb R^N}|U_0|^q.
$$
Recall that  $g_0(\rho_0)=\frac{1}{q}\int_{\mathbb R^N}|U_0|^q-\frac{1}{2}\int_{\mathbb R^N}|U_0|^2>0$, it follows that $\int_{\mathbb R^N}|U_0|^q>\int_{\mathbb R^N}|U_0|^2.$
If $t_\lambda\ge 1$, then
$$
Nm_0+\lambda^\sigma \int_{\mathbb R^N}|U_0|^2\ge t_\lambda^{q-2}\left\{Nm_0+\lambda^\sigma\int_{\mathbb R^N}|U_0|^q\right\}
$$
and hence 
$$
t_\lambda\le \left(\frac{Nm_0+\lambda^\sigma\int_{\mathbb R^N}|U_0|^2}{Nm_0+\lambda^\sigma\int_{\mathbb R^N}|U_0|^q}\right)^{\frac{1}{q-2}}<1,
$$
a contradiction. Therefore, $t_\lambda<1$ and hence
$$
Nm_0+\lambda^\sigma \int_{\mathbb R^N}|U_0|^2< t_\lambda^{q-2}\left\{Nm_0+\lambda^\sigma\int_{\mathbb R^N}|U_0|^q\right\},
$$
from which it follows that
\begin{equation}\label{e215}
\left(\frac{Nm_0+\lambda^\sigma\int_{\mathbb R^N}|U_0|^2}{Nm_0+\lambda^\sigma\int_{\mathbb R^N}|U_0|^q}\right)^{\frac{1}{q-2}}<t_\lambda
<1.
%\left(\frac{Nm_0+\lambda^\sigma\int_{\mathbb R^N}|U_0|^2}{Nm_0+\lambda^\sigma\int_{\mathbb R^N}|U_0|^q}\right)^{\frac{1}{2^*-2}}.
\end{equation}
Let 
$$A_\lambda:=\frac{\int_{\mathbb R^N}|U_0|^q-\int_{\mathbb R^N} |U_0|^2}{Nm_0+\lambda^\sigma\int_{\mathbb R^N}|U_0|^q}.$$ Then $A_\lambda\le \frac{1}{Nm_0}[\int_{\mathbb R^N}|U_0|^q-\int_{\mathbb R^N} |U_0|^2]$ and 
\begin{equation}\label{e215R}
[1-\lambda^\sigma A_\lambda]^{\frac{1}{q-2}}<t_\lambda<1.
%[1-\lambda^\sigma A_\lambda]^{\frac{1}{2^*-2}}.
\end{equation}
Let $g(t):=\frac{t^2}{2}\int_{\mathbb R^N}|U_0|^2-\frac{t^q}{q}\int_{\mathbb R^N}|U_0|^q$, and $h(x):=g([1-x]^{\frac{1}{q-2}})$ for $x\in [0,1]$.
Then  $g(t)$ has an unique miximum point at $t_0=\left(\frac{\int_{\mathbb R^N}|U_0|^2}{\int_{\mathbb R^N}|U_0|^q}\right)^{\frac{1}{q-2}}$ and is strictly decreasing in $(t_0,1)$, and for small $x>0$, we have 
$$
h'(x)=\frac{1}{q-2}[1-x]^{\frac{q-4}{q-2}}\left[-\int_{\mathbb R^N}|U_0|^2+(1-x)\int_{\mathbb R^N}|U_0|^q\right]>0.
$$
Therefore, for small $\lambda>0$, it follows from \eqref{e215R} and the monotonicity of $g(t)$ in $(t_0,1)$ that
$$
g(t_\lambda)\le g([1-\lambda^\sigma A_\lambda]^\frac{1}{q-2})=h(\lambda^\sigma A_\lambda)=\frac{1}{2}\int_{\mathbb R^N}|U_0|^2-\frac{1}{q}\int_{\mathbb R^N}|U_0|^q+h'(\xi)\lambda^\sigma A_\lambda, 
$$
for some $\xi\in (0, \lambda^\sigma A_\lambda)$.
Since for small $\lambda>0$, we have 
$$
h'(\xi)\le \frac{2}{q-2}\left[\int_{\mathbb R^N}|U_0|^q-\int_{\mathbb R^N}|U_0|^2\right], 
$$
and similar to \eqref{e210R}, we have 
$$
\frac{\int_{\mathbb R^N}|U_0|^q-\int_{\mathbb R^N}|U_0|^2}{\int_{\mathbb R^N}|U_0|^{2^*}}\le G(q),
$$
thus,  by the definition of $A_\lambda$, we obtain that 
\begin{align*}
g(t_\lambda)&\le\frac{1}{2}\int_{\mathbb R^N}|U_0|^2-\frac{1}{q}\int_{\mathbb R^N}|U_0|^q+\frac{2\lambda^\sigma}{Nm_0(q-2)}\left[\int_{\mathbb R^N}|U_0|^q-\int_{\mathbb R^N}|U_0|^2\right]^2\\
&=-g_0(\rho_0)+\frac{2\lambda^\sigma}{Nm_0(q-2)}\left[Nm_0\frac{\int_{\mathbb R^N}|U_0|^q-\int_{\mathbb R^N}|U_0|^2}{\int_{\mathbb R^N}|U_0|^{2^*}}\right]^2\\
&\le-g_0(\rho_0)+\lambda^\sigma\frac{2Nm_0}{q-2}G(q)^2,
\end{align*}
from which the conclusion follows.
\end{proof}

\begin{lemma}\label{l28}
There exists a constant $\varpi=\varpi(q)>0$ such that  for all small $\lambda>0$, 
$$
m_\lambda\le\left\{\begin{array}{lclcl} m_0-\lambda^\sigma\left(\ln \frac{1}{\lambda}\right)^{-\frac{4-q}{q-2}}\varpi&=&m_0-\lambda^{\frac{2}{q-2}}(\ln\frac{1}{\lambda})^{-\frac{4-q}{q-2}}\varpi   &\text{if}&N=4,\smallskip\\
m_0-\lambda^{\sigma+\frac{2(6-q)}{(q-2)(q-4)}}\varpi&=&m_0-\lambda^{\frac{2}{q-4}}\varpi &\text{if}&N=3 \  \text{and} \  q>4.\end{array}\right.
$$
\end{lemma}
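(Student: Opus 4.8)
The plan is to bound $m_\lambda$ from above via the minimax identity $m_\lambda=\inf_{v\in H^1(\R^N)\setminus\{0\}}\sup_{t\ge0}J_\lambda(tv)$ of Lemma~\ref{l23}, with a test function obtained from a rescaled Talenti bubble which, in contrast to the bubble $U_{\rho_0}$ used in Lemma~\ref{l27}, must now be \emph{truncated}, since $U_\rho\notin L^2(\R^N)$ for $N=3,4$. Fix $\varphi\in C_c^\infty(\R^N)$ with $\varphi\equiv1$ on $B_1$, $\operatorname{supp}\varphi\subset B_2$, $0\le\varphi\le1$, and for $\rho>0$, $R\gg\rho$ put $\Phi_{\rho,R}(x):=\varphi(x/R)U_\rho(x)$ and $T:=R/\rho$. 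The key point is that the concentration scale $\rho$ and the truncation scale $R$ will be chosen \emph{independently}. Standard Brezis--Nirenberg type computations for the truncated instanton give, as $T\to\infty$,
$$\|\nabla\Phi_{\rho,R}\|_2^2=S^{N/2}+O\!\big(T^{-(N-2)}\big),\qquad \|\Phi_{\rho,R}\|_{2^*}^{2^*}=S^{N/2}+O\!\big(T^{-N}\big),$$
$$\|\Phi_{\rho,R}\|_q^q=\rho^{\,N-\frac{(N-2)q}{2}}\big(\|U_1\|_q^q+o(1)\big),\qquad \|\Phi_{\rho,R}\|_2^2\sim\begin{cases}\rho^2\,T,&N=3,\\[2pt]\rho^2\ln T,&N=4,\end{cases}$$
where for the $L^q$--norm one uses $(N-2)q>N$ (true since $q>4$ if $N=3$ and $q\in(2,4)$ if $N=4$), which makes $\|U_1\|_q^q$ finite; recall also $m_0=\frac1NS^{N/2}$.

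Next I would estimate $\sup_{t\ge0}J_\lambda(t\Phi_{\rho,R})$. Write $J_\lambda(t\Phi_{\rho,R})$ as the sum of a ``critical part'' $\frac{t^2}{2}\|\nabla\Phi_{\rho,R}\|_2^2-\frac{t^{2^*}}{2^*}\|\Phi_{\rho,R}\|_{2^*}^{2^*}$, whose supremum over $t\ge0$ equals $\frac1N\big(\|\nabla\Phi_{\rho,R}\|_2^2/\|\Phi_{\rho,R}\|_{2^*}^2\big)^{N/2}=m_0+O(T^{-(N-2)})$, and a ``lower order part'' $\lambda^\sigma\big(\frac{t^2}{2}\|\Phi_{\rho,R}\|_2^2-\frac{t^q}{q}\|\Phi_{\rho,R}\|_q^q\big)$. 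The function $t\mapsto J_\lambda(t\Phi_{\rho,R})$ has a unique maximiser $t_\lambda>0$, characterised by $\|\nabla\Phi_{\rho,R}\|_2^2+\lambda^\sigma\|\Phi_{\rho,R}\|_2^2=t_\lambda^{2^*-2}\|\Phi_{\rho,R}\|_{2^*}^{2^*}+\lambda^\sigma t_\lambda^{q-2}\|\Phi_{\rho,R}\|_q^q$; once the parameters are chosen so that $\lambda^\sigma\|\Phi_{\rho,R}\|_q^q\to0$ and $\lambda^\sigma\|\Phi_{\rho,R}\|_2^2\to0$, this identity forces $t_\lambda\to1$. If in addition the parameters are chosen so that $\|\Phi_{\rho,R}\|_2^2\le\mu\|\Phi_{\rho,R}\|_q^q$ with $\mu=\mu(q)>0$ small, then the lower order part at $t_\lambda$ is $\le-c(q)\lambda^\sigma\|\Phi_{\rho,R}\|_q^q$ for small $\lambda$, whence
$$\sup_{t\ge0}J_\lambda(t\Phi_{\rho,R})\le m_0+C\,T^{-(N-2)}-c(q)\,\lambda^\sigma\|\Phi_{\rho,R}\|_q^q .$$

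The decisive step, which I expect to be the main obstacle, is to choose $\rho=\rho(\lambda)$ and $R=R(\lambda)$ so that the gain $\lambda^\sigma\|\Phi_{\rho,R}\|_q^q\sim\lambda^\sigma\rho^{\,N-(N-2)q/2}$ dominates \emph{both} the Sobolev error $T^{-(N-2)}=(\rho/R)^{N-2}$ \emph{and} the discarded term $\lambda^\sigma\|\Phi_{\rho,R}\|_2^2$, with exactly the claimed size. For $N=3$, $q\in(4,6)$, I would take $\rho=\kappa\,\lambda^{\frac{4}{(q-2)(q-4)}}$ and $R=\lambda^{-\frac{2}{q-2}}$, so that $T\sim\kappa^{-1}\lambda^{-\frac{2}{q-4}}$ and all three competing quantities $\lambda^\sigma\rho^{3-q/2}$, $\rho/R$ and $\lambda^\sigma\rho^2T$ are of the \emph{same} order $\lambda^{\frac{2}{q-4}}=\lambda^{\sigma+\frac{2(6-q)}{(q-2)(q-4)}}$, carrying $\kappa$--prefactors $\kappa^{3-q/2}$, $\kappa$, $\kappa$ respectively; the domination is therefore not automatic but is secured by taking the fixed constant $\kappa>0$ small, since the relevant ratios then behave like $\kappa^{-(q/2-2)}\to\infty$ (note $q/2-2>0$ as $q>4$), and this smallness of $\kappa$ also yields $\|\Phi_{\rho,R}\|_2^2\le\mu\|\Phi_{\rho,R}\|_q^q$ with $\mu$ small. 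For $N=4$, $q\in(2,4)$, I would take $\rho=\kappa\,(\ln\tfrac1\lambda)^{-\frac{1}{q-2}}$ and $R=\lambda^{-\frac{1}{q-2}}$, so that $\ln T\sim\tfrac1{q-2}\ln\tfrac1\lambda$ and the logarithm in $\|\Phi_{\rho,R}\|_2^2\sim\rho^2\ln T$ generates precisely the factor $(\ln\tfrac1\lambda)^{-\frac{4-q}{q-2}}$; here the Sobolev error $T^{-2}\sim\kappa^2\lambda^{\frac{2}{q-2}}(\ln\tfrac1\lambda)^{-\frac{2}{q-2}}$ carries one more power of $\ln\tfrac1\lambda$ than the gain $\sim\lambda^{\frac{2}{q-2}}(\ln\tfrac1\lambda)^{-\frac{4-q}{q-2}}$ (they differ by a factor $(\ln\tfrac1\lambda)^{-1}\to0$, using $q>2$), so it is dominated for free, and $\kappa$ is taken small only to arrange $\|\Phi_{\rho,R}\|_2^2\le\mu\|\Phi_{\rho,R}\|_q^q$. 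In both cases the displayed estimate then yields $m_\lambda\le m_0-\varpi(q)\lambda^{\frac{2}{q-4}}$ for $N=3$ and $m_\lambda\le m_0-\varpi(q)\lambda^{\frac{2}{q-2}}(\ln\tfrac1\lambda)^{-\frac{4-q}{q-2}}$ for $N=4$, with $\varpi(q)>0$, which is the assertion (recall $\sigma=\frac{2}{q-2}$ when $N=4$ and $\sigma+\frac{2(6-q)}{(q-2)(q-4)}=\frac{2}{q-4}$ when $N=3$).
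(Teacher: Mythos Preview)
Your proposal is correct and follows essentially the same strategy as the paper: test the minimax characterisation of $m_\lambda$ from Lemma~\ref{l23} with a truncated Talenti bubble $\eta_R U_\rho$, use the standard Brezis--Nirenberg expansions, and then balance the Sobolev truncation error $T^{-(N-2)}$ against the gain coming from the subcritical term. The only differences are bookkeeping: the paper uses the dilation fibering $v_t(x)=v(x/t)$ (so the lower--order part carries a common factor $t_\lambda^N$) and first \emph{optimises} over $\rho$ at fixed $R$ to obtain $\varphi(\rho_0)$, whereas you use the Nehari fibering $t\mapsto tv$ and pick $\rho,R$ explicitly; for $N=3$ your choice $T\sim\kappa^{-1}\lambda^{-2/(q-4)}$ is identical to the paper's $\ell=\delta^{-1}\lambda^{-2/(q-4)}$ (with $\kappa$ playing the role of $\delta$), while for $N=4$ the paper takes $\ell=\lambda^{-M}$ with any $M>\frac{1}{q-2}$, which differs from your choice only by a harmless logarithmic factor.
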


\begin{proof}
Let  $\rho>0$, $R\gg 1$ be a large parameter and $\eta_R\in C_0^\infty(\mathbb R)$ is a cut-off function such that $\eta_R(r)=1$ for $|r|<R$, $0<\eta_R(r)<1$ for $R<|r|<2R$, $\eta_R(r)=0$ for $|r|>2R$ and $|\eta'_R(r)\le 2/R$.  

For $\ell\gg 1$, a straightforward computation shows that 
$$
\int_{\mathbb R^N}|\nabla (\eta_\ell U_1)|^2=
\left\{ 
\begin{array}{ccl} Nm_0+O(\ell^{-2}) &\text{if}& N=4,\\
Nm_0+O(\ell^{-1})   &\text{if}& N=3.
\end{array}
\right.
$$ 
$$
\int_{\mathbb R^N} |\eta_\ell U_1|^{2^*}=Nm_0+O(\ell^{-N}),
$$
$$
\int_{\mathbb R^N}|\eta_\ell U_1|^2=\left\{ 
\begin{array}{ccl} \ln \ell(1+o(1)) &\text{if}& N=4,\\
\ell(1+o(1)) &\text{if}& N=3.
\end{array}\right.
$$ 
By Lemma \ref{l23}, we find
\begin{align}\label{l216}
m_\lambda&\le \sup_{t\ge 0}J_\lambda((\eta_RU_\rho)_t)=J_\lambda((\eta_RU_\rho)_{t_\lambda})\\
&\le \sup_{t\ge 0}\left(\frac{t^{N-2}}{2}\int_{\mathbb R^N}|\nabla(\eta_RU_\rho)|^2-\frac{t^{N}}{2^*}\int_{\mathbb R^N}|\eta_RU_\rho|^{2^*}\right)\nonumber\\
&\quad  -\lambda^\sigma
t_\lambda^N\left[\int_{\mathbb R^N}\frac{1}{q}|\eta_RU_\rho|^q-\frac{1}{2}|\eta_RU_\rho|^2\right]\nonumber\\
&=  (I)-\lambda^\sigma (II).\nonumber
\end{align}
where 
\begin{equation}\label{l217}
t_\lambda
=\left(\frac{(N-2)\int_{\mathbb R^N}|\nabla(\eta_RU_\rho)|^2}{2N\left[\frac{1}{2^*}\int_{\mathbb R^N}|\eta_RU_\rho|^{2^*}-\frac{\lambda^\sigma}{2}\int_{\mathbb R^N}|\eta_RU_\rho|^2+\frac{\lambda^\sigma}{q}\int_{\mathbb R^N}|\eta_RU_\rho|^q\right]}\right)^{\frac{1}{2}}.
\end{equation}
Set $\ell=R/\rho$, then
\begin{equation}\label{l218}
(I)=\frac{1}{N}\frac{\|\nabla(\eta_\ell U_1\|_2^N}{\|\eta_\ell U_1\|_{2^*}^N}=
\left\{\begin{array}{rcl} m_0+O(\ell^{-2})&\text{if}&  N=4,\smallskip\\
m_0+O(\ell^{-1})&\text{if}&N=3.
\end{array}\right.
\end{equation}
Since 
$$
\varphi(\rho):=\int_{\mathbb R^N}\frac{1}{q}|\eta_RU_\rho|^q-\frac{1}{2}|\eta_RU_\rho|^2=\frac{1}{q}\rho^{N-\frac{N-2}{2}q}\int_{\mathbb R^N}|\eta_\ell U_1|^q-\frac{1}{2}\rho^2\int_{\mathbb R^N}|\eta_\ell U_1|^2
$$
takes its maximum value $\varphi(\rho_0)$ at the unique point $\rho_0>0$, and 
$$
\varphi(\rho_0)=\sup_{\rho\ge 0}\varphi(\rho)=\frac{1}{q}\left(\frac{2}{q}\right)^{\frac{2^*-q}{q-2}}G(q)\left(\frac{\|\eta_\ell U_1\|_q^{q(2^*-2)}}{\|\eta_\ell U_1\|_2^{2(2^*-q)}}\right)^{\frac{1}{q-2}}\le\frac{1}{q}\left(\frac{2}{q}\right)^{\frac{2^*-q}{q-2}}G(q)\| U_1\|_{2^*}^{2^*},
$$
where we have used the interpolation inequality
$$
\|\eta_\ell U_1\|_q^q\le \|\eta_\ell U_1\|_2^{\frac{2(2^*-q)}{2^*-2}}\|\eta_\ell U_1\|_{2^*}^{\frac{2^*(q-2)}{2^*-2}}.
$$
Then we obtain
\begin{align}\label{l219}
(II)&=\left(\frac{\|\nabla(\eta_\ell U_1)\|_2^2}{\|\eta_\ell U_1\|_{2^*}^{2^*}+\lambda^\sigma 2^*\varphi(\rho_0)}\right)^{N/2}\varphi(\rho_0)\\
&\ge \left(\frac{\|\nabla(\eta_\ell U_1)\|_2^2}{\|\eta_\ell U_1\|_{2^*}^{2^*}}\right)^{N/2}
\left[1-\lambda^\sigma\frac{N^2\varphi(\rho_0)}{(N-2)\|\eta_\ell U_1\||_{2^*}^{2^*}}\right]\varphi(\rho_0).\nonumber
\end{align}
Therefore, we have 
\begin{align}\label{l220}
m_\lambda&\le\frac{1}{N}\frac{\|\nabla(\eta_\ell U_1\|_2^N}{\|\eta_\ell U_1\|_{2^*}^N}\left\{ 1-\lambda^\sigma\frac{N}{\|\eta_\ell U_1\|_{2^*}^{(2^*-2)N/2}}\left[1-\lambda^\sigma\frac{N^2\varphi(\rho_0)}{(N-2)\|\eta_\ell U_1\||_{2^*}^{2^*}}\right]\varphi(\rho_0)\right\}\nonumber\\
&\le\frac{1}{N}\frac{\|\nabla(\eta_\ell U_1\|_2^N}{\|\eta_\ell U_1\|_{2^*}^N}\left\{ 1-\lambda^\sigma\frac{N}{2\|\eta_\ell U_1\|_{2^*}^{(2^*-2)N/2}}\varphi(\rho_0)\right\}\nonumber\\
&\le\frac{1}{N}\frac{\|\nabla(\eta_\ell U_1\|_2^N}{\|\eta_\ell U_1\|_{2^*}^N}\left\{ 1-\lambda^\sigma \frac{2}{m_0}\varphi(\rho_0)\right\}.\nonumber
\end{align}
For the rest of the proof, we consider separately the cases $N=4$ and $N=3$.

\smallskip
\paragraph{\sc Case $N=4$.} Since 
$$
\begin{array}{rcl}
\varphi(\rho_0)&=&\frac{1}{q}\left(\frac{2}{q}\right)^{\frac{2^*-q}{q-2}}G(q)\left(\frac{\| U_1\|_q^{q(2^*-2)}+o(1)}{[\ln\ell(1 +o(1))]^{2^*-q}}\right)^{\frac{1}{q-2}}\\
&=&
(\ln\ell)^{-\frac{2^*-q}{q-2}}\frac{1}{q}\left(\frac{2}{q}\right)^{\frac{2^*-q}{q-2}}G(q)\left(\| U_1\|_q^{q(2^*-2)}+o(1)\right)^{\frac{1}{q-2}}\\
&\ge& (\ln\ell)^{-\frac{2^*-q}{q-2}}\frac{1}{2q}\left(\frac{2}{q}\right)^{\frac{2^*-q}{q-2}}G(q)\| U_1\|_q^{\frac{q(2^*-2)}{q-2}},
\end{array}
$$
by \eqref{l218}, we have 
$$
m_\lambda\le [m_0+O(\ell^{-2})]\left\{ 1-\lambda^\sigma (\ln\ell)^{-\frac{2^*-q}{q-2}}\frac{1}{qm_0}\left(\frac{2}{q}\right)^{\frac{2^*-q}{q-2}}G(q)\| U_1\|_q^{\frac{q(2^*-2)}{q-2}}\right\}.
$$
Take $\ell=(1/\lambda)^M$. Then 
$$
m_\lambda\le [m_0+O(\lambda^{2M})]\left\{ 1-M^{-\frac{2^*-q}{q-2}}\lambda^\sigma (\ln\frac{1}{\lambda})^{-\frac{2^*-q}{q-2}}\frac{1}{qm_0}\left(\frac{2}{q}\right)^{\frac{2^*-q}{q-2}}G(q)\| U_1\|_q^{\frac{q(2^*-2)}{q-2}}\right\}.
$$
If $M>\frac{1}{q-2}$, then $2M>\sigma$, and hence
\begin{equation}\label{e221}
m_\lambda\le m_0-\lambda^\sigma (\ln\frac{1}{\lambda})^{-\frac{2^*-q}{q-2}}\frac{1}{2q}\left(\frac{2}{qM}\right)^{\frac{2^*-q}{q-2}}G(q)\| U_1\|_q^{\frac{q(2^*-2)}{q-2}}.
\end{equation}
Thus, if $N=4$, the result of Lemma \ref{l28} is proved by choosing 
$$
\varpi=\frac{1}{2q}\left(\frac{2}{qM}\right)^{\frac{2^*-q}{q-2}}G(q)\| U_1\|_q^{\frac{q(2^*-2)}{q-2}}.
$$

\paragraph{\sc Case $N=3$.} In this case, we always assume that $q\in (4,6)$.  Since 
\begin{align}
\varphi(\rho_0)&=\frac{1}{q}\left(\frac{2}{q}\right)^{\frac{2^*-q}{q-2}}G(q)\left(\frac{\| U_1\|_q^{q(2^*-2)}+o(1)}{[\ell(1 +o(1))]^{2^*-q}}\right)^{\frac{1}{q-2}}\\
&=
\ell^{-\frac{2^*-q}{q-2}}\frac{1}{q}\left(\frac{2}{q}\right)^{\frac{2^*-q}{q-2}}G(q)\left(\| U_1\|_q^{q(2^*-2)}+o(1)\right)^{\frac{1}{q-2}}\nonumber\\
&\ge \ell^{-\frac{2^*-q}{q-2}}\frac{1}{2q}\left(\frac{2}{q}\right)^{\frac{2^*-q}{q-2}}G(q)\| U_1\|_q^{\frac{q(2^*-2)}{q-2}},\nonumber
\end{align}
 we have 
$$
m_\lambda\le [m_0+O(\ell^{-1})]\left\{ 1-\lambda^\sigma \ell^{-\frac{2^*-q}{q-2}}\frac{1}{qm_0}\left(\frac{2}{q}\right)^{\frac{2^*-q}{q-2}}G(q)\| U_1\|_q^{\frac{q(2^*-2)}{q-2}}\right\}.
$$
Take $\ell=\delta^{-1}\lambda^{-\frac{2}{q-4}}$. Then 
$$
m_\lambda\le [m_0+\delta O(\lambda^{\frac{2}{q-4}})]\left\{ 1-\delta^{\frac{6-q}{q-2}}\lambda^{\frac{2}{q-4}}\frac{1}{qm_0}\left(\frac{2}{q}\right)^{\frac{6-q}{q-2}}G(q)\| U_1\|_q^{\frac{4q}{q-2}}\right\}.
$$
Since $\frac{6-q}{q-2}<1$, we can choose a small $\delta>0$ such that 
\begin{equation}\label{l222}
m_\lambda\le m_0-\lambda^{\frac{2}{q-4}}\frac{1}{2q}\left(\frac{2\delta}{q}\right)^{\frac{6-q}{q-2}}G(q)\| U_1\|_q^{\frac{4q}{q-2}},
\end{equation}
and take
$$
\varpi=\frac{1}{2q}\left(\frac{2\delta}{q}\right)^{\frac{6-q}{q-2}}G(q)\| U_1\|_q^{\frac{4q}{q-2}},
$$
which finished the proof in the case $N=3$.
\end{proof}

\begin{corollary}\label{c29}
Let $\delta_\lambda:=m_0-m_\lambda$, then 
$$
\lambda^\sigma\gtrsim \delta_\lambda\gtrsim 
\left\{\begin{array}{lcl} 
 \lambda^\sigma &\text{if}&  N\ge 5,\\
 \lambda^{\frac{2}{q-2}}(\ln\frac{1}{\lambda})^{-\frac{4-q}{q-2}}&\text{if}&N=4,\\
 \lambda^{\frac{2}{q-4}} &\text{if}& N=3 \ \text{and} \ q\in (4,6).
 \end{array}\right.
 $$
\end{corollary}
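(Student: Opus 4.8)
The plan is to read off all three estimates directly from Lemmas \ref{l26}, \ref{l27} and \ref{l28}; this corollary is essentially a bookkeeping step, and the only care needed is to check that in each regime the stated power of $\lambda$ is the genuine leading order.

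First, for the universal upper bound $\delta_\lambda\lesssim\lambda^\sigma$, valid for all $N\ge3$, I would use part $(ii)$ of Lemma \ref{l26}, which yields
$$\delta_\lambda=m_0-m_\lambda<m_0\lambda^\sigma NG(q)\bigl(1+G(q)\lambda^\sigma\bigr)^{\frac{N-2}{2}}.$$
Since $G(q)\sim q-2$ is bounded and $\bigl(1+G(q)\lambda^\sigma\bigr)^{(N-2)/2}\to1$ as $\lambda\to0$, the right-hand side is $\lesssim\lambda^\sigma$, which gives the claim.

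Next, for the lower bounds. When $N\ge5$, Lemma \ref{l27} gives
$$\delta_\lambda\ge\lambda^\sigma\Bigl\{\tfrac{c_0}{q}\bigl(\tfrac2q\bigr)^{\frac{2^*-q}{q-2}}G(q)-\lambda^\sigma\tfrac{2Nm_0}{q-2}G(q)^2\Bigr\},$$
and since the term in braces tends to the strictly positive constant $\tfrac{c_0}{q}\bigl(\tfrac2q\bigr)^{(2^*-q)/(q-2)}G(q)$ as $\lambda\to0$, it exceeds, say, half of that constant for all small $\lambda$, so $\delta_\lambda\gtrsim\lambda^\sigma$. When $N=4$, Lemma \ref{l28} gives $\delta_\lambda\ge\varpi\lambda^\sigma(\ln\tfrac1\lambda)^{-\frac{4-q}{q-2}}$, and because $\sigma=\tfrac{2}{q-2}$ when $N=4$ this is precisely the asserted bound. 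Finally, when $N=3$ and $q\in(4,6)$, Lemma \ref{l28} gives $\delta_\lambda\ge\varpi\lambda^{\sigma+\frac{2(6-q)}{(q-2)(q-4)}}$, and an elementary computation shows $\sigma+\tfrac{2(6-q)}{(q-2)(q-4)}=\tfrac{2}{q-4}$, which again is the claim.

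There is no real obstacle here: everything reduces to the already-proved estimates, and the only things to verify are (a) that the subtracted correction terms in Lemmas \ref{l26} and \ref{l27} are of strictly higher order in $\lambda$, so that the leading term survives as $\lambda\to0$, and (b) the two elementary exponent identities $\sigma=\tfrac{2}{q-2}$ for $N=4$ and $\sigma+\tfrac{2(6-q)}{(q-2)(q-4)}=\tfrac{2}{q-4}$ for $N=3$, which reconcile the form of the exponents appearing in Lemma \ref{l28} with the form stated in the corollary.
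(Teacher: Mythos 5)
Your proof is correct and coincides with the paper's (implicit) argument: the corollary is stated without proof precisely because it is the direct combination of Lemma \ref{l26}$(ii)$ for the upper bound and Lemmas \ref{l27}--\ref{l28} for the lower bounds, together with the two exponent identities you verify. The only minor caveat, shared with the paper itself (where $\varpi=\varpi(q)$), is that the implicit constants depend on $q$, so the estimates should be read for fixed $q$ rather than uniformly in $q$.
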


\begin{lemma}\label{l210}
Assume $N\ge 5$. Then for small $\lambda>0$,
\begin{equation}\label{e223}
\frac{2q}{2^*-2}Q(q)m_0\ge \|v_\lambda\|_q^q\ge Q(q)\left(\frac{c_0}{q}\left(\frac{2}{q}\right)^{\frac{2^*-q}{q-2}}-\lambda^\sigma 2NQ(q)m_0
\right)\frac{q(2^*-2)}{(\tau(v_\lambda))^{N/2}},
\end{equation}
where $c_0>0$ is given in Lemma \ref{l27}.   
In particular, 
$$
 \|v_\lambda\|_2^2\sim 2^*-q \quad and   \quad \|v_\lambda\|_q^q\sim 1.    
$$
\end{lemma}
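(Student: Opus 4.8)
The plan is to derive an exact identity expressing $\|v_\lambda\|_q^q$ through the least energy $m_\lambda$ and the Sobolev quotient $\tau(v_\lambda)$, and then to squeeze $\|v_\lambda\|_q^q$ between an upper bound coming from Lemma~\ref{l26}$(i)$ and a lower bound coming from the energy estimate of Lemma~\ref{l27}.

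First I would use that $v_\lambda\in\mathcal N_\lambda$ together with the identity \eqref{e28}: substituting \eqref{e28} into the Nehari identity $\|\nabla v_\lambda\|_2^2+\lambda^\sigma\|v_\lambda\|_2^2=\|v_\lambda\|_{2^*}^{2^*}+\lambda^\sigma\|v_\lambda\|_q^q$ eliminates the $L^2$--norm and gives
\[
\|\nabla v_\lambda\|_2^2-\|v_\lambda\|_{2^*}^{2^*}=\frac{2^*(q-2)}{q(2^*-2)}\,\lambda^\sigma\|v_\lambda\|_q^q .
\]
The left-hand side equals $\|v_\lambda\|_{2^*}^{2^*}\big(\tau(v_\lambda)-1\big)$, and since $m_\lambda=\frac1N\|\nabla v_\lambda\|_2^2$ (read off in the proof of Lemma~\ref{l26}) we have $\|v_\lambda\|_{2^*}^{2^*}=\|\nabla v_\lambda\|_2^2/\tau(v_\lambda)=Nm_\lambda/\tau(v_\lambda)$; using $N(2^*-2)/2^*=2$ this rearranges to the working identity
\[
\lambda^\sigma\|v_\lambda\|_q^q=\frac{2q}{q-2}\,m_\lambda\cdot\frac{\tau(v_\lambda)-1}{\tau(v_\lambda)} .
\]

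For the upper bound I would insert $\tau(v_\lambda)-1\le G(q)\lambda^\sigma$ (Lemma~\ref{l26}$(i)$) together with $\tau(v_\lambda)>1$ and $m_\lambda<m_0$, cancel $\lambda^\sigma$, and rewrite $\frac{2q}{q-2}G(q)=\frac{2q}{2^*-2}Q(q)$ via $G(q)=\frac{q-2}{2^*-2}Q(q)$, which gives $\|v_\lambda\|_q^q\le\frac{2q}{2^*-2}Q(q)m_0$. For the lower bound I would revisit the chain \eqref{e210RR} from the proof of Lemma~\ref{l26}$(ii)$, namely $m_0\le m_\lambda+\lambda^\sigma(\tau(v_\lambda))^{N/2}\big[\frac1q\|v_\lambda\|_q^q-\frac12\|v_\lambda\|_2^2\big]$, but this time keeping the exact value of the bracket, which by \eqref{e28} equals $\frac{q-2}{q(2^*-2)}\|v_\lambda\|_q^q$. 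This gives, with $\delta_\lambda:=m_0-m_\lambda$,
\[
\|v_\lambda\|_q^q\ \ge\ \frac{q(2^*-2)}{q-2}\cdot\frac{\delta_\lambda}{\lambda^\sigma(\tau(v_\lambda))^{N/2}} ,
\]
and inserting $\delta_\lambda\ge\lambda^\sigma\big\{\frac{c_0}{q}(\frac2q)^{\frac{2^*-q}{q-2}}G(q)-\lambda^\sigma\frac{2Nm_0}{q-2}G(q)^2\big\}$ from Lemma~\ref{l27}, cancelling $\lambda^\sigma$ and again using $G(q)=\frac{q-2}{2^*-2}Q(q)$, produces the claimed lower bound in \eqref{e223}.

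The ``in particular'' conclusion then follows from the asymptotics $Q(q)\sim1$, $G(q)\sim q-2$ (Lemma~\ref{l26}): the coefficients $\frac{2q}{2^*-2}$, $q(2^*-2)$ and $\frac{c_0}{q}(\frac2q)^{\frac{2^*-q}{q-2}}$ are all bounded above and below by positive constants uniformly for $q\in(2,2^*)$ (for the last one one checks the finite positive limits $\frac{c_0}{2}e^{-2/(N-2)}$ as $q\to2$ and $\frac{c_0}{2^*}$ as $q\to2^*$), one has $\lambda^\sigma\le\lambda$ since $\sigma\ge1$, and $1<\tau(v_\lambda)\le1+G(q)\lambda^\sigma$ stays bounded; hence for small $\lambda$ both bounds in \eqref{e223} are $\sim1$, so $\|v_\lambda\|_q^q\sim1$, and then \eqref{e28} gives $\|v_\lambda\|_2^2=\frac{2(2^*-q)}{q(2^*-2)}\|v_\lambda\|_q^q\sim 2^*-q$. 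The main obstacle is the lower bound: one must feed the $\lambda^\sigma$--proportional, $q$--\emph{uniform} energy estimate of Lemma~\ref{l27} (whose whole point is the $q$--independent constant $c_0$) into the inequality above and verify that the resulting constants do not degenerate as $q\to2$ or $q\to2^*$ — which is exactly where the endpoint asymptotics of $Q(q)$ and $G(q)$ (Remark~\ref{r21}, Lemma~\ref{l26}) are needed.
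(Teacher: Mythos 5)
Your proposal is correct and follows essentially the same route as the paper's own proof: the identity $\lambda^\sigma\frac{q-2}{2q}\|v_\lambda\|_q^q=\frac{\tau(v_\lambda)-1}{\tau(v_\lambda)}m_\lambda$ combined with Lemma \ref{l26}$(i)$ for the upper bound, the chain \eqref{e210RR} with \eqref{e28} and Lemma \ref{l27} for the lower bound, and the endpoint limits of $Q(q)$, $\big(\tfrac2q\big)^{\frac{2^*-q}{q-2}}$ together with \eqref{e28} for the conclusion $\|v_\lambda\|_q^q\sim1$, $\|v_\lambda\|_2^2\sim 2^*-q$. The small constant-factor discrepancies between your final lower bound and the display \eqref{e223} are the same harmless bookkeeping already present between the paper's proof and its statement, and do not affect the argument.
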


\begin{proof}
Since 
$$
m_\lambda=\frac{1}{N}\int_{\mathbb R^N}|\nabla v_\lambda|^2=\frac{1}{N}\int_{\mathbb R^N}|v_\lambda|^{2^*}+\lambda^\sigma\frac{q-2}{2q}\int_{\mathbb R^N}|v_\lambda|^q,
$$
then by Lemma \ref{l26},  we get
$$
\lambda^\sigma\frac{q-2}{2q}\int_{\mathbb R^N}|v_\lambda|^q=\frac{\tau(v_\lambda)-1}{\tau(v_\lambda)}m_\lambda\le \lambda^\sigma G(q)m_0,
$$
and hence
$$
\|v_\lambda\|_q^q\le 2q\frac{G(q)}{q-2}m_0=\frac{2q}{2^*-2}Q(q)m_0.
$$
On the other hand, by \eqref{e28} and \eqref{e210RR}, we have
$$
m_0\le m_\lambda+\lambda^\sigma (\tau(v_\lambda))^{N/2}\frac{q-2}{q(2^*-2)}\int_{\mathbb R^N}|v_\lambda|^q.
$$
Therefore, it follows from Lemma \ref{l27} that 
$$
\|v_\lambda\|_q^q\ge \left(\frac{c_0}{q}\left(\frac{2}{q}\right)^{\frac{2^*-q}{q-2}}\frac{G(q)}{q-2}-\lambda^\sigma 2Nm_0\left(\frac{G(q)}{q-2}\right)^2\right)\frac{q(2^*-2)}{(\tau(v_\lambda))^{N/2}},
$$
from which the conclusion follows. 
 
A straightforward computation shows that
$$
\lim_{q\to 2}\left(\frac{2}{q}\right)^{\frac{2^*-q}{q-2}}=e^{-\frac{2}{N-2}}, \qquad 
\lim_{q\to 2}\left(\frac{2^*-q}{2^*-2}\right)^{\frac{2^*-q}{q-2}}=e^{-1}, \qquad  \lim_{q\to 2^*}\left(\frac{2^*-q}{2^*-2}\right)^{\frac{2^*-q}{q-2}}=1,
$$
which together with $\|v_\lambda\|_2^2=\frac{2(2^*-q)}{q(2^*-2)}\|v_\lambda\|_q^q$  yield the last relation. 
\end{proof}

Recall that $m_\lambda=m_\lambda^*$ for $\lambda>0$ by Lemma \ref{l22}.  Moreover, the following result holds.

\begin{lemma}\label{l211}
$m_0=m_0^*$.
\end{lemma}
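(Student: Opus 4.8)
The plan is to prove the two inequalities $m_0\le m_0^*$ and $m_0\ge m_0^*$ separately. The guiding observation is that on each constraint set the energy collapses, through the corresponding Nehari identity, to a single norm: for $v\in\mathcal N_0$ one has $J_0(v)=\frac1N\|\nabla v\|_2^2$, while for $u\in\mathcal M_0$ one has $I_0(u)=\frac1N\|u\|_{2^*}^{2^*}=\frac1N\big(\|\nabla u\|_2^2+\|u\|_2^2\big)$. In particular $m_0=\frac1N S^{N/2}$, where $S$ is the best Sobolev constant, attained on $\mathcal N_0$ by the Talenti function $U_1$ with $\|\nabla U_1\|_2^2=\|U_1\|_{2^*}^{2^*}=Nm_0$.

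For $m_0\le m_0^*$ I would take any $u\in\mathcal M_0\subset H^1(\mathbb R^N)$ and project it onto $\mathcal N_0$ by setting $w:=\tau(u)^{\frac{N-2}{4}}u$, which lies in $\mathcal N_0$ by the observation following \eqref{e25}. Since $u\in\mathcal M_0$ forces $\|u\|_{2^*}^{2^*}=\|\nabla u\|_2^2+\|u\|_2^2\ge\|\nabla u\|_2^2$, i.e. $\tau(u)\le1$, we get
\[
m_0\le J_0(w)=\frac1N\,\tau(u)^{\frac{N-2}{2}}\|\nabla u\|_2^2\le\frac1N\|\nabla u\|_2^2\le\frac1N\|u\|_{2^*}^{2^*}=I_0(u),
\]
and taking the infimum over $\mathcal M_0$ gives $m_0\le m_0^*$.

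For the reverse inequality I would test $m_0^*$ with suitably rescaled truncations of $U_1$. For any $\phi\in H^1(\mathbb R^N)\setminus\{0\}$ there is a unique $t_\phi>0$ with $t_\phi\phi\in\mathcal M_0$, and a short computation gives $m_0^*\le I_0(t_\phi\phi)=\frac1N\big((\|\nabla\phi\|_2^2+\|\phi\|_2^2)/\|\phi\|_{2^*}^2\big)^{N/2}$; it therefore suffices to make this quotient as close as desired to $S=(Nm_0)^{2/N}$. Fixing a cut-off $\eta_R\in C_c^\infty(\mathbb R^N)$ with $\eta_R\equiv1$ on $B_R$ and $\eta_R\equiv0$ outside $B_{2R}$ and setting $\phi_R:=\eta_RU_1\in H^1(\mathbb R^N)$, one has $\|\nabla\phi_R\|_2^2/\|\phi_R\|_{2^*}^2\to S$ as $R\to\infty$; then applying the dilation $\phi_R^\mu(x):=\phi_R(x/\mu)$ and the scaling identities $\|\nabla\phi_R^\mu\|_2^2=\mu^{N-2}\|\nabla\phi_R\|_2^2$, $\|\phi_R^\mu\|_2^2=\mu^N\|\phi_R\|_2^2$ and $\|\phi_R^\mu\|_{2^*}^2=\mu^{N-2}\|\phi_R\|_{2^*}^2$ yields
\[
\frac{\|\nabla\phi_R^\mu\|_2^2+\|\phi_R^\mu\|_2^2}{\|\phi_R^\mu\|_{2^*}^2}=\frac{\|\nabla\phi_R\|_2^2+\mu^2\|\phi_R\|_2^2}{\|\phi_R\|_{2^*}^2}\ \xrightarrow[\mu\to0]{}\ \frac{\|\nabla\phi_R\|_2^2}{\|\phi_R\|_{2^*}^2}.
\]
Choosing first $R$ large, then $\mu$ small, makes the left-hand side $<S+\varepsilon$ for any prescribed $\varepsilon>0$, whence $m_0^*\le\frac1N(S+\varepsilon)^{N/2}\to\frac1N S^{N/2}=m_0$. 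Together with the previous step this gives $m_0=m_0^*$.

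The main obstacle is precisely that the extremal $U_1$ of $m_0$ belongs to $D^1(\mathbb R^N)$ but, for $N=3,4$, not to $H^1(\mathbb R^N)$ (indeed $\|U_1\|_2=\infty$), so it is inadmissible as a competitor for $m_0^*$. This is what forces the two-step ``truncate then dilate'' procedure in the second inequality, and the order of the limits ($R\to\infty$ first, then $\mu\to0$ with $R$ fixed) is essential, since only after $R$ is frozen is the mass term $\mu^2\|\eta_RU_1\|_2^2$ finite and hence negligible as $\mu\to0$. For $N\ge5$ this complication disappears: one may just take $\phi=U_\rho$ with $\rho\to0$, since then $\phi\in H^1(\mathbb R^N)$ and $\|\phi\|_2^2=\rho^2\|U_1\|_2^2\to0$.
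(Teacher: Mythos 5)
Your proof is correct and follows essentially the same route as the paper: your first inequality (projecting $u\in\mathcal M_0$ onto $\mathcal N_0$ via $\tau(u)^{\frac{N-2}{4}}u$) is equivalent to the paper's minimax comparison $m_0=\inf\sup_t J_0(tu)\le\inf\sup_t I_0(tu)=m_0^*$, and your second inequality via truncated-then-dilated Talenti functions is precisely the argument the paper invokes by referring to Lemmas \ref{l26} and \ref{l28}. You have simply written out in detail (including the correct order of limits, $R\to\infty$ then $\mu\to0$, needed since $U_1\notin H^1(\R^N)$ for $N=3,4$) what the paper leaves as a sketch.
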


\begin{proof}
Clearly,  we have 
 $$
 m_0=\inf_{u\in D^{1}(\mathbb R^N)\setminus \{0\}}\sup_{t>0}J_0(tu)\le \inf_{u\in H^1(\mathbb R^N)\setminus \{0\}}\sup_{t>0}I_0(tu)=m_0^*.
 $$
To prove the opposite inequality, we argue as in the proof of Lemma \ref{l26} and Lemma \ref{l28}, but easier. 
\end{proof}

 Clearly, Lemma \ref{l211} implies that $m^*_0$ is not attained on $\mathcal M_0$. In fact,  it is also well known that $(P_0)$ has no nontrivial solution by the Pohozaev's identity. 
Observe that
 $$
I_0(u_\lambda)=  
I_\lambda(u_\lambda)+\frac{\lambda}{q}\int_{\mathbb R^N}|u_\lambda|^q=m_\lambda+o(1)=m_0^*+o(1),
$$
and 
$$
I'_0(u_\lambda)v=I'_\lambda(u_\lambda)v+\lambda\int_{\mathbb R^N}|u_\lambda|^{q-2}u_\lambda v=o(1).
$$
That is, the family $\{u_\lambda\}$ of ground states of $(P_\lambda)$ is a $(PS)$ sequence of $I_0$ at level $m_0^*$
(otherwise $u_0$ should be a nontrivial solution of $(P_0)$, which is a contradiction).

\section{Proof of Theorem \ref{t1}}\label{s4}

We recall the P.-L.~Lions' concentration--compactness lemma, which is at the core of our proof of  Theorem \ref{t1}.

\begin{lemma}[P.-L.~Lions \cite{Lions-1}]\label{l31}
Let $r>0$ and $2\leq q\leq 2^{*}$. If $(u_{n})$ is bounded in $H^{1}(\mathbb{R}^N)$ and if
$$\sup_{y\in\mathbb{R}^N}\int_{B_{r}(y)}|u_{n}|^{q}\to0\quad\textrm{as}\ n\to\infty,$$
then $u_{n}\to0$ in $L^{p}(\mathbb{R}^N)$ for $2<p<2^*$. Moreover, if $q=2^*$, then $u_{n}\to0$ in $L^{2^{*}}(\mathbb{R}^N)$.
\end{lemma}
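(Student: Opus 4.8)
The plan is to deduce the statement from a single localized estimate combined with a covering of $\R^N$ by balls of radius $r$ with bounded overlap. First I would fix such a covering $\R^N=\bigcup_{i\in\N}B_r(y_i)$, with $(y_i)$ ranging over a lattice chosen so that there is an integer $K=K(N)$ with $\sum_i\mathbf 1_{B_r(y_i)}\le K$ pointwise; this gives $\sum_i\|u_n\|_{L^2(B_r(y_i))}^2\le K\|u_n\|_2^2$ and $\sum_i\|u_n\|_{H^1(B_r(y_i))}^2\le K\|u_n\|_{H^1}^2$. Since all these balls have the same volume $|B_r|$, Hölder's inequality gives $\|u_n\|_{L^2(B_r(y))}^2\le|B_r|^{1-2/q}\big(\int_{B_r(y)}|u_n|^q\big)^{2/q}$ uniformly in $y$, so $\sup_y\int_{B_r(y)}|u_n|^2\to0$; hence for the conclusion in the range $2<p<2^*$ we may assume $q=2$ from the outset.

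Next, for a fixed $p\in(2,2^*)$ I would interpolate $\|u_n\|_{L^p(B_r(y))}\le\|u_n\|_{L^2(B_r(y))}^{1-\theta}\|u_n\|_{L^{2^*}(B_r(y))}^{\theta}$, where $\tfrac1p=\tfrac{1-\theta}{2}+\tfrac{\theta}{2^*}$ and $\theta\in(0,1)$, and then use the Sobolev embedding $H^1(B_r(y))\hookrightarrow L^{2^*}(B_r(y))$ with a constant independent of $y$ (by translation invariance) to obtain $\int_{B_r(y)}|u_n|^p\le C\|u_n\|_{L^2(B_r(y))}^{p(1-\theta)}\|u_n\|_{H^1(B_r(y))}^{p\theta}$. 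Writing $\eps_n:=\sup_y\int_{B_r(y)}|u_n|^2\to0$, $s:=\tfrac{p(1-\theta)}{2}\in(0,1)$ and $t:=\tfrac{p\theta}{2}>0$, and noting the key identity $s+t=\tfrac p2>1$, I would split off a vanishing factor via $\|u_n\|_{L^2(B_r(y_i))}^{p(1-\theta)}\le\eps_n^{\,s-\gamma}\|u_n\|_{L^2(B_r(y_i))}^{2\gamma}$ for any $\gamma$ in the interval $(\max(0,1-t),\,s)$ — nonempty precisely because $s+t>1$ — and then apply Hölder to the resulting series with conjugate exponents $1/\gamma$ and $1/(1-\gamma)$. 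The exponents are chosen exactly so that $\sum_i\|u_n\|_{L^2(B_r(y_i))}^2$ appears to the first power and $\sum_i\big(\|u_n\|_{H^1(B_r(y_i))}^2\big)^{t/(1-\gamma)}\le\big(\sum_i\|u_n\|_{H^1(B_r(y_i))}^2\big)^{t/(1-\gamma)}$ (using $\sum_i a_i^{\rho}\le(\sum_i a_i)^{\rho}$ for $\rho\ge1$, valid here since $t/(1-\gamma)\ge1$), so both Hölder factors stay bounded uniformly in $n$. This yields $\int_{\R^N}|u_n|^p\le\sum_i\int_{B_r(y_i)}|u_n|^p\le C\eps_n^{\,s-\gamma}\to0$.

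For the borderline case $q=2^*$ I would argue similarly but without interpolation: with $\eps_n:=\sup_y\int_{B_r(y)}|u_n|^{2^*}\to0$, one writes $\int_{B_r(y_i)}|u_n|^{2^*}\le\eps_n^{\,1-2/2^*}\big(\int_{B_r(y_i)}|u_n|^{2^*}\big)^{2/2^*}\le C\eps_n^{\,2/N}\|u_n\|_{H^1(B_r(y_i))}^2$ by the uniform Sobolev inequality on balls, and then sums over $i$ using bounded overlap to get $\|u_n\|_{2^*}^{2^*}\le CK\eps_n^{\,2/N}\|u_n\|_{H^1}^2\to0$, since $2/N>0$.

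The step I expect to be the main obstacle is the exponent bookkeeping in the middle paragraph: one cannot extract $\eps_n$ from the $L^2$ factor alone, because the $L^2$ mass of $u_n$ may be spread thinly over infinitely many balls, so that $\sum_i\|u_n\|_{L^2(B_r(y_i))}^{p(1-\theta)}$ need not be small — or even bounded. The fix is to keep the local $L^2$ and $H^1$ contributions paired in a single product before summing and to balance the Hölder exponents against the two bounded-overlap sums; the inequality $s+t=p/2>1$, which holds exactly on the open range $p\in(2,2^*)$, is precisely what makes this balancing possible and explains why the endpoints are excluded (the endpoint $p=2^*$ being recoverable only under the stronger hypothesis $q=2^*$, handled separately as above).
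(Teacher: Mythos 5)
Your argument is correct, but note that the paper does not prove this lemma at all: it is quoted verbatim from Lions' concentration--compactness paper, so there is no internal proof to compare against. Measured against the classical argument (Lions; see also Willem's Minimax Theorems, Lemma 1.21), your route differs in one respect: the standard proof first reduces, as you do, to a local interpolation bound $\int_{B_r(y)}|u_n|^p\le C\|u_n\|_{L^q(B_r(y))}^{(1-\theta)p}\|u_n\|_{H^1(B_r(y))}^{\theta p}$, but then chooses the single exponent $p$ for which $\theta p=2$, so that summing over the bounded-overlap covering is immediate, and recovers the whole range $2<p<2^*$ afterwards by $L^2$--$L^{2^*}$ interpolation using the $H^1$ bound. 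You instead fix an arbitrary $p\in(2,2^*)$ and make the summation work directly by splitting off $\eps_n^{\,s-\gamma}$ and balancing H\"older exponents against the two $\ell^1$ sums; the identity $s+t=p/2>1$ that you isolate is exactly what makes this possible, and your bookkeeping is consistent (indeed $s=\frac{2^*-p}{2^*-2}\in(0,1)$, so the admissible interval for $\gamma$ lies inside $(0,1)$ and the conjugate exponents $1/\gamma$, $1/(1-\gamma)$ are legitimate). Your preliminary reduction to $q=2$ via H\"older on balls of fixed volume, and your separate treatment of the endpoint case $q=2^*$ through $\int_{B}|u_n|^{2^*}\le\eps_n^{2/N}\|u_n\|_{L^{2^*}(B)}^{2}$ plus the uniform Sobolev embedding on balls, are both sound. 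The trade-off is that the classical proof is shorter because the final interpolation step is free, while yours is self-contained for each fixed $p$ at the cost of the exponent balancing; either version fully justifies the statement as used in the paper.
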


Using Lemma \ref{l31}, we establish the following.
 
\begin{lemma}\label{l34}
If $N\ge 5$, then $v_\lambda\to U_{\rho_0}$ in $H^1(\mathbb R^N)$ as $\lambda\to 0$, where $U_{\rho_0}$ is  a positive ground state of $(Q_0)$ with
$$
\rho_0=\left(\frac{2(2^*-q)\int_{\mathbb R^N}|U_1|^q}{q(2^*-2)\int_{\mathbb R^N}|U_1|^2}\right)^{\frac{2^*-2}{2(q-2)}}.
$$
If $N=4$ and $N=3$, then there exists $\xi_\lambda\in (0,+\infty)$ such that $\xi_\lambda\to 0$ and
$$
v_\lambda-\xi_\lambda^{-\frac{N-2}{2}}U_1(\xi^{-1}_\lambda\cdot)\to 0
$$
in $D^{1}(\mathbb R^N)$ and $L^{2^*}(\mathbb R^N)$ as $\lambda\to 0$.
\end{lemma}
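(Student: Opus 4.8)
\emph{Overview.} The plan is to view $\{v_\lambda\}$ as a minimising family for the Emden--Fowler problem $(Q_0)$, extract a single Talenti bubble by a rescaling-and-concentration-compactness argument, and then --- only for $N\ge5$ --- upgrade the resulting $D^1$-convergence to $H^1$ and pin down the scale. I would use freely that, by Lemmas \ref{l24}, \ref{l26} and Corollary \ref{c29}: $\{v_\lambda\}$ is bounded in $H^1$ (indeed in $L^p$ uniformly for $p\in[2,2^*]$), $\|\nabla v_\lambda\|_2^2=Nm_\lambda\to Nm_0$ and $\tau(v_\lambda)\to1$, whence $\|v_\lambda\|_{2^*}^{2^*}=\|\nabla v_\lambda\|_2^2/\tau(v_\lambda)\to Nm_0>0$; in particular, by Lemma \ref{l31}, $v_\lambda$ does not vanish. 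For $N\ge5$ I would additionally use $\|v_\lambda\|_q^q\sim1$, $\|v_\lambda\|_2^2\sim2^*-q$ from Lemma \ref{l210}, and in all cases the identity $\|v_\lambda\|_2^2=\frac{2(2^*-q)}{q(2^*-2)}\|v_\lambda\|_q^q$ from \eqref{e28}.

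\emph{Fixing the scale.} Since $v_\lambda>0$ is radial with $v_\lambda(0)=\|v_\lambda\|_\infty$, set $\xi_\lambda:=(\|v_\lambda\|_\infty/\|U_1\|_\infty)^{-2/(N-2)}$ and $\tilde v_\lambda(x):=\xi_\lambda^{(N-2)/2}v_\lambda(\xi_\lambda x)$, so $\tilde v_\lambda(0)=\|U_1\|_\infty$ and the $D^1$-, $L^{2^*}$- and Dirichlet-energy quantities are scale invariant. A routine computation rewrites $(Q_\lambda)$ as
$$-\Delta\tilde v_\lambda+\lambda^\sigma\xi_\lambda^2\,\tilde v_\lambda=\tilde v_\lambda^{2^*-1}+\beta_\lambda\,\tilde v_\lambda^{q-1},\qquad\beta_\lambda:=\lambda^\sigma\xi_\lambda^{\,2-(N-2)(q-2)/2},$$
and one checks the exact identities $\lambda^\sigma\xi_\lambda^2\|\tilde v_\lambda\|_2^2=\lambda^\sigma\|v_\lambda\|_2^2$ and $\beta_\lambda\|\tilde v_\lambda\|_q^q=\lambda^\sigma\|v_\lambda\|_q^q$, both $\to0$. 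Moreover $\xi_\lambda$ is bounded above: if $\|v_\lambda\|_\infty\to0$ then $\|v_\lambda\|_2^2\ge\|v_\lambda\|_{2^*}^{2^*}\|v_\lambda\|_\infty^{-(2^*-2)}\to\infty$, contradicting the $L^2$-bound. As $2-(N-2)(q-2)/2>0$ for all $q<2^*$, this gives $\lambda^\sigma\xi_\lambda^2,\beta_\lambda\to0$, so the right-hand side above is bounded in $L^\infty$ (recall $\|\tilde v_\lambda\|_\infty=\|U_1\|_\infty$); by elliptic regularity $\tilde v_\lambda$ is bounded in $C^{1,\alpha}_{loc}$, and along a subsequence $\tilde v_\lambda\to\tilde v$ in $C^1_{loc}$ with $\tilde v\ge0$ radial, $-\Delta\tilde v=\tilde v^{2^*-1}$ and $\tilde v(0)=\|U_1\|_\infty$; hence $\tilde v=U_1$ and $\tilde v_\lambda\rightharpoonup U_1$ in $D^1$.

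\emph{Strong $D^1$-convergence (the core step).} Testing the equation for $\tilde v_\lambda$ with $\tilde v_\lambda$ and using the identities above yields $\|\nabla\tilde v_\lambda\|_2^2=\|\tilde v_\lambda\|_{2^*}^{2^*}+o(1)$. With $z_\lambda:=\tilde v_\lambda-U_1\rightharpoonup0$, the Brezis--Lieb lemma in $D^1$ and $L^{2^*}$ together with $\|\nabla U_1\|_2^2=\|U_1\|_{2^*}^{2^*}$ give $\|\nabla z_\lambda\|_2^2=\|z_\lambda\|_{2^*}^{2^*}+o(1)\le S^{-2^*/2}\|\nabla z_\lambda\|_2^{2^*}+o(1)$, so either $\|\nabla z_\lambda\|_2\to0$ or $\|\nabla z_\lambda\|_2^2\ge S^{N/2}+o(1)=Nm_0+o(1)$; the latter forces $\|\nabla\tilde v_\lambda\|_2^2=\|\nabla U_1\|_2^2+\|\nabla z_\lambda\|_2^2+o(1)\ge2Nm_0+o(1)$, contradicting $\|\nabla\tilde v_\lambda\|_2^2=\|\nabla v_\lambda\|_2^2\to Nm_0$. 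Hence $\tilde v_\lambda\to U_1$ in $D^1\cap L^{2^*}$, and undoing the scale-invariant rescaling, $v_\lambda-\xi_\lambda^{-(N-2)/2}U_1(\xi_\lambda^{-1}\cdot)\to0$ in $D^1\cap L^{2^*}$.

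\emph{Locating $\xi_\lambda$ and the $L^2$-norm (the main obstacle).} If $N\in\{3,4\}$ and $\xi_\lambda\not\to0$, then along a subsequence $\xi_\lambda\to\xi_*\in(0,\infty)$, so $v_\lambda\to U_{\xi_*}$ in $L^{2^*}$ and a.e., and Fatou's lemma with the $L^2$-bound on $v_\lambda$ would give $U_{\xi_*}\in L^2(\mathbb R^N)$ --- false for $N\le4$; hence $\xi_\lambda\to0$, which is the claim in this case. If $N\ge5$, then $\|v_\lambda\|_q^q\sim1$ together with the $L^q$-version of Brezis--Lieb (legitimate since the remainder is bounded in $L^2$ and $\to0$ in $L^{2^*}$, hence $\to0$ in $L^q$ by interpolation) forces $\xi_\lambda$ bounded away from $0$; passing to a subsequence $\xi_\lambda\to\rho\in(0,\infty)$, we obtain $v_\lambda\to U_\rho$ in $D^1\cap L^{2^*}\cap L^q$. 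The remaining difficulty is the upgrade to $H^1$, i.e. excluding loss of $L^2$-mass at infinity: here I would invoke the uniform decay bound $v_\lambda(x)\lesssim(1+|x|)^{-(N-2)}$ (standard for such ground states, from the uniform $H^1$-bound, the boundedness of $\|v_\lambda\|_\infty$ and Moser/elliptic estimates), which for $N\ge5$ gives $\int_{|x|>R}|v_\lambda|^2\lesssim R^{-(N-4)}\to0$ uniformly in $\lambda$; combined with $v_\lambda\to U_\rho$ in $L^2_{loc}$ this yields $v_\lambda\to U_\rho$ in $L^2(\mathbb R^N)$, hence in $H^1$. Finally, passing \eqref{e28} to the limit gives $\|U_\rho\|_2^2=\frac{2(2^*-q)}{q(2^*-2)}\|U_\rho\|_q^q$, and inserting $\|U_\rho\|_2^2=\rho^2\|U_1\|_2^2$, $\|U_\rho\|_q^q=\rho^{2(2^*-q)/(2^*-2)}\|U_1\|_q^q$ and solving for $\rho$ gives $\rho=\rho_0$; uniqueness of the limit then promotes the convergence to the whole family.
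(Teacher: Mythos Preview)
Your argument is correct and reaches the same conclusion, but the route differs from the paper's. The paper treats $\{v_\lambda\}$ directly as a Palais--Smale sequence for $J_0$ at level $m_0$ and invokes the radial profile decomposition \eqref{e33}--\eqref{e35}: energy accounting forces either $v_0\neq0,\ k=0$ (which, via Lemma~\ref{l210}, is the case when $N\ge5$) or $v_0=0,\ k=1$ (forced by Fatou and $U_1\notin L^2$ when $N\le4$). You instead fix the scale a priori by the $L^\infty$--normalisation $\xi_\lambda=(\|v_\lambda\|_\infty/\|U_1\|_\infty)^{-2/(N-2)}$, use interior elliptic regularity to identify the $C^1_{loc}$ limit as $U_1$, and then run a Brezis--Lieb/energy--quantisation dichotomy to rule out a second bubble. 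This is essentially the rescaling \eqref{resc-imp} from \cite{Akahori-2} transplanted to $v_\lambda$; it is more elementary (no abstract profile decomposition) and has the bonus of making $\xi_\lambda$ explicit. The paper's approach, on the other hand, is more modular and would adapt verbatim to situations where an $L^\infty$--normalisation is unavailable.

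Two minor points where the paper is slightly more self-contained. First, for the $L^2$--upgrade when $N\ge5$ you invoke the sharp uniform decay $v_\lambda(x)\lesssim(1+|x|)^{-(N-2)}$ as ``standard''; the paper instead derives the weaker but sufficient bound $v_\lambda(x)\le C|x|^{-(N-2-\varepsilon_0)}$ directly from the Strauss radial lemma and a comparison with $-\Delta-C|x|^{-2(N-1)/(N-2)}$, which avoids appealing to Moser iteration. Second, the paper's $\xi_\lambda$ is produced abstractly by the decomposition and only later (Lemma~\ref{l311}) is the pointwise decay of $w_\lambda$ established; your order of operations requires the decay earlier, which is fine but worth flagging as a nontrivial input.
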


\begin{proof}
Note that $v_\lambda$ is a positive radially symmetric function, and by Lemma \ref{l24}, $\{v_\lambda\}$ is bounded in $H^1(\mathbb R^N)$. Then there exists $v_0\in H^1(\mathbb R^N)$ verifying $-\Delta v=v^{2^*-1}$ such that 
\begin{equation}\label{e31}
v_\lambda\rightharpoonup v_0   \quad {\rm weakly \ in} \  H^1(\mathbb R^N), \quad v_\lambda\to v_0 \quad {\rm in} \ L^p(\mathbb R^N) \quad {\rm for \ any} \ p\in (2,2^*),
\end{equation}
and 
\begin{equation}\label{e32}
v_\lambda(x)\to v_0(x) \quad \text{a.e.~on $\R^N$},  \qquad v_\lambda\to v_0 \quad\text{in}  \  L^2_{loc}(\mathbb R^N).
\end{equation}
Observe that
$$
J_0(v_\lambda)=J_\lambda(v_\lambda)+\frac{\lambda^\sigma}{q}\int_{\mathbb R^N}|v_\lambda|^q-\frac{\lambda^\sigma}{2}\int_{\mathbb R^N}|v_\lambda|^2=m_\lambda+o(1)=m_0+o(1),
$$
and 
$$
J'_0(v_\lambda)v=J'_\lambda(v_\lambda)v+\lambda^\sigma\int_{\mathbb R^N}|v_\lambda|^{q-2}v_\lambda v-\lambda^\sigma\int_{\mathbb R^N}v_\lambda v=o(1).
$$
Therefore, $\{v_\lambda\}$ is a $(PS)$ sequence for $J_0$.

By Lemma \ref{l31},  it is standard to show that there exists $\zeta^{(j)}_\lambda\in (0,+\infty)$, $v^{(j)}\in D^{1,2}(\mathbb R^N)$ with $j=1,2,\dots, k$ where $k$  is a non-negative integer, such that
\begin{equation}\label{e33}
v_\lambda=v_0+\sum_{j=1}^k(\zeta^{(j)}_\lambda)^{-\frac{N-2}{2}}v^{(j)}((\zeta^{(j)}_\lambda)^{-1} x)+\tilde v_\lambda,
\end{equation}
where $\tilde v_\lambda\to 0$ in $L^{2^*}(\mathbb R^N)$, $v^{(j)}$ are nontrivial solutions of the limit equation $-\Delta v=v^{2^*-1}$ and  $\int_{\mathbb R^N}|\nabla v^{(j)}|^2\ge S^{\frac{N}{2}}$ with $S$ being the best  Sobolev constant. Moreover, we have 
\begin{equation}\label{e34}
\liminf_{\lambda\to 0}\|v_\lambda\|^2_{D^1(\mathbb R^N)}\ge  \|v_0\|^2_{D^1(\mathbb R^N)}+\sum_{j=1}^k\|v^{(j)}\|^2_{D^1(\mathbb R^N)},
\end{equation}
and 
\begin{equation}\label{e35}
m_0=J_0(v_0)+\sum_{j=1}^kJ_0(v^{(j)}).
\end{equation}
Moreover, $J_0(v_0)\ge 0$ and $J_0(v^{(j)})\ge m_0$ for all $j=1,2,\cdots, k.$

If $N\ge 5$, then by Lemma \ref{l210}, we have $v_0\not=0$ and hence $J_0(v_0)=m_0$ and  $k=0$.  Thus $v_\lambda\to v_0$ in $L^{2^*}(\mathbb R^N)$. Since $J_0'(v_\lambda)\to 0$, it follows that
$v_\lambda\to v_0$ in $D^{1}(\mathbb R^N)$.

Observe that by the Strauss' $H^1$--radial lemma \cite[Lemma A.II]{Berestycki-1} we have 
$$
v_\lambda(x)\le C_N|x|^{-\frac{N-1}{2}}\|v_\lambda\|_{H^1(\mathbb R^N)} \quad {\rm for} \  |x|>0.
$$
Hence we obtain
$$
\Big(-\Delta-C|x|^{-\frac{2(N-1)}{N-2}}\Big)v_\lambda\le \big(-\Delta +\lambda^\sigma-v_\lambda^{2^*-2}-\lambda^\sigma v_\lambda^{q-2}\big)v_\lambda=0,
$$
for some constant $C>0$ which is independent of $\lambda$.  We also have
$$
\Big(-\Delta-C|x|^{-\frac{2(N-1)}{N-2}}\Big)\frac{1}{|x|^{N-2-\varepsilon_0}}=\big(\varepsilon_0(N-2-\varepsilon_0)-C|x|^{-\frac{2}{N-2}}\big)\frac{1}{|x|^{N-\varepsilon_0}},
$$
which is positive for $|x|$ large enough. By the maximum principle on $\mathbb R^N\setminus B_R$, we deduce that 
\begin{equation}\label{e36}
v_\lambda(x)\le \frac{v_\lambda(R)R^{N-2-\varepsilon_0}}{|x|^{N-2-\varepsilon_0}}  \quad\text{for}  \ |x|\ge R.
\end{equation}
When $\varepsilon_0>0$ is small enough, the right hand side is in $L^2(B_R^c)$ for $N\ge 5$ and by the dominated convergence theorem we conclude that $v_\lambda\to v_0$ in $L^2(\mathbb R^N)$, and hence in  $H^1(\mathbb R^N)$. Moreover, by \eqref{e28} we obtain
$$
\int_{\mathbb R^N}|v_0|^2=\frac{2(2^*-q)}{q(2^*-2)}\int_{\mathbb R^N}|v_0|^q,
$$
from which it follows that $v_0=U_{\rho_0}$ with $$
\rho_0=\left(\frac{2(2^*-q)\int_{\mathbb R^N}|U_1|^q}{q(2^*-2)\int_{\mathbb R^N}|U_1|^2}\right)^{\frac{2^*-2}{2(q-2)}}.
$$

If $N=4$ or $3$, then by Fatou's lemma we have $\|v_0\|^2_2\le \liminf_{\lambda\to 0}\|v_\lambda\|_2^2<\infty$. Therefore,  $v_0=0$ and hence $k=1$. Thus, we obtain  
$J_0(v^{(1)})=m_0$ and hence $v^{(1)}=U_\rho$ for some $\rho\in (0,+\infty)$. Therefore, we conclude that 
$$v_\lambda-\xi_\lambda^{-\frac{N-2}{2}}U_1(\xi_\lambda^{-1}\cdot )\to 0
$$ in $L^{2^*}(\mathbb R^N)$ as $\lambda\to 0$, where 
$\xi_\lambda:=\rho\zeta_\lambda^{(1)}\in (0,+\infty)$ satisfying $\xi_\lambda\to 0$ as $\lambda\to 0$.
Since $$J_0'(v_\lambda-\xi_\lambda^{-\frac{N-2}{2}}U_1(\xi_\lambda^{-1}\cdot ))=J'_0(v_\lambda)+J'_0(U_1)+o(1)=o(1)$$ as $\lambda\to 0$, it follows that $v_\lambda-\xi_\lambda^{-\frac{N-2}{2}}U_1(\xi_\lambda^{-1}\cdot )\to 0$ in $D^{1}(\mathbb R^N)$
\end{proof}

In the lower dimension cases $N=4$ and $N=3$, we perform an additional rescaling
\begin{equation}\label{e37}
w(x)=\xi_\lambda^{\frac{N-2}{2}} v(\xi_\lambda x), 
\end{equation}
where $\xi_\lambda\in (0,+\infty)$ is given in Lemma \ref{l34}. This rescaling transforms $(Q_\lambda)$ into an equivalent equation
$$
-\Delta w+\lambda^\sigma\xi_\lambda^{(2^*-2)s}w=w^{2^*-1}+\lambda^\sigma\xi_\lambda^{(2^*-q)s}w^{q-1}
\quad\text{in} \ \R^N,
\leqno(R_\lambda)
$$
here and in what follows, we set for brevity
$$
s:=\tfrac{N-2}{2}=\left\{\begin{array}{rcl} 1, \quad\text{if}  \ N=4,\smallskip\\
\frac{1}{2}, \quad\text{if} \  N=3.
\end{array}\right.
$$
The  corresponding energy functional is given by
\begin{equation}\label{e38}
\tilde J_\lambda(w):=\frac{1}{2}\int_{\mathbb R^N}|\nabla w|^2+\lambda^\sigma\xi_\lambda^{(2^*-2)s}|w|^2-\frac{1}{2^*}\int_{\mathbb R^N}|w|^{2^*}-\frac{1}{q}\lambda^\sigma\xi_\lambda^{(2^*-q)s}\int_{\mathbb R^N}|w|^q.
\end{equation}

It is straightforward to verify the following.

\begin{lemma}\label{l36}
Let $\lambda>0$, $u\in H^1(\mathbb R^N)$ and $v$ and $w$ are the rescalings  \eqref{e22} and \eqref{e37} of $u$ respectively. Then:

\begin{enumerate}
	\item[$(a)$] $\|\nabla w\|_2^2= \|\nabla v\|_{2}^{2}=\|\nabla u\|_{2}^{2}$, $\|w\|^{2^*}_{2^*}=\|v\|_{2^*}^{2^*}=\|u\|_{2^*}^{2^*}$, \smallskip

\item[$(b)$]  $\xi_\lambda^{(2^*-2)s}\|w\|^2_2=\|v\|_2^2=\lambda^{-\sigma}\| u\|_2^2$,    $\xi_\lambda^{(2^*-q)s}\|w\|^q_q=\|v\|_q^q=\lambda^{1-\sigma} \|u\|_q^q$, \smallskip

\item[$(c)$]  $\tilde J_\lambda(w)=J_\lambda(v)=I_\lambda(u)$. 
\end{enumerate}
\end{lemma}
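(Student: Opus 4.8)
The plan is to prove all three parts by a direct change of variables, since each of \eqref{e22} and \eqref{e37} is a pure dilation combined with a multiplicative rescaling, so the asserted identities amount to bookkeeping of scaling exponents. The only arithmetic inputs I would isolate at the outset are the elementary dilation rules: if $g(x)=a\,h(bx)$ with $a,b>0$ then
\[
\|\nabla g\|_2^2=a^2 b^{2-N}\|\nabla h\|_2^2,\qquad \|g\|_r^r=a^r b^{-N}\|h\|_r^r\quad(r\ge 1),
\]
together with the two algebraic identities $\tfrac{(2^*-2)(N-2)}{2}=2$ and $\tfrac{2^*(N-2)}{2}=N$, both immediate from $2^*=\tfrac{2N}{N-2}$, and the relations $\sigma=\tfrac{2^*-2}{q-2}$, $1-\sigma=\tfrac{q-2^*}{q-2}$.

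For the passage $u\mapsto v$ I would take $a=\lambda^{1/(q-2)}$, $b=\lambda^{(2^*-2)/(2(q-2))}$. Then $a^2 b^{2-N}=\lambda^{2/(q-2)}\cdot\lambda^{-2/(q-2)}=1$ by the first identity, giving $\|\nabla v\|_2^2=\|\nabla u\|_2^2$, while $a^{2^*}b^{-N}=\lambda^{2^*/(q-2)}\cdot\lambda^{-2^*/(q-2)}=1$ by the second, giving $\|v\|_{2^*}^{2^*}=\|u\|_{2^*}^{2^*}$; and $a^2 b^{-N}=\lambda^{-\sigma}$, $a^q b^{-N}=\lambda^{1-\sigma}$ yield $\lambda^\sigma\|v\|_2^2=\|u\|_2^2$ and $\lambda^\sigma\|v\|_q^q=\lambda\|u\|_q^q$ (this is exactly Lemma~\ref{l22}(a),(b)). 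For the passage $v\mapsto w$ I would take $a=\xi_\lambda^{s}$, $b=\xi_\lambda$ with $s=\tfrac{N-2}{2}$; then $2s+2-N=0$ and $2^*s=N$ give $\|\nabla w\|_2^2=\|\nabla v\|_2^2$ and $\|w\|_{2^*}^{2^*}=\|v\|_{2^*}^{2^*}$, while $a^2 b^{-N}=\xi_\lambda^{2s-N}=\xi_\lambda^{-(2^*-2)s}$ and $a^q b^{-N}=\xi_\lambda^{qs-N}=\xi_\lambda^{-(2^*-q)s}$ give $\xi_\lambda^{(2^*-2)s}\|w\|_2^2=\|v\|_2^2$ and $\xi_\lambda^{(2^*-q)s}\|w\|_q^q=\|v\|_q^q$. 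Composing, all relations in (a) and (b) follow.

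Part (c) then follows by substituting (a) and (b) into the definitions \eqref{e24} of $J_\lambda$ and \eqref{e38} of $\tilde J_\lambda$: term by term, $J_\lambda(v)$ matches $I_\lambda(u)$ using $\|\nabla v\|_2^2=\|\nabla u\|_2^2$, $\|v\|_{2^*}^{2^*}=\|u\|_{2^*}^{2^*}$, $\lambda^\sigma\|v\|_2^2=\|u\|_2^2$, $\lambda^\sigma\|v\|_q^q=\lambda\|u\|_q^q$, and likewise $\tilde J_\lambda(w)$ matches $J_\lambda(v)$ using $\|\nabla w\|_2^2=\|\nabla v\|_2^2$, $\|w\|_{2^*}^{2^*}=\|v\|_{2^*}^{2^*}$, $\xi_\lambda^{(2^*-2)s}\|w\|_2^2=\|v\|_2^2$, $\xi_\lambda^{(2^*-q)s}\|w\|_q^q=\|v\|_q^q$. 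There is no genuine obstacle here: the statements are for a fixed $\lambda>0$ and every step is an exact identity, not an estimate. The only point requiring care is the exponent arithmetic — namely that the Jacobian factor $b^{-N}$ cancels the prefactor exactly for the $\|\cdot\|_{2^*}^{2^*}$ and $\|\nabla\cdot\|_2^2$ terms, and for the remaining terms leaves precisely the powers of $\lambda$ and $\xi_\lambda$ that are absorbed into the weights $\lambda^\sigma$, $\lambda^\sigma\xi_\lambda^{(2^*-2)s}$, $\lambda^\sigma\xi_\lambda^{(2^*-q)s}$ in the rescaled functionals.
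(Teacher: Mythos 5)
Your proposal is correct and is exactly the direct change-of-variables verification the paper has in mind (the paper omits the proof, stating only that the lemma is straightforward to verify); the exponent arithmetic — $(2^*-2)\tfrac{N-2}{2}=2$, $2^*\tfrac{N-2}{2}=N$, and the identification of the leftover powers of $\lambda$ and $\xi_\lambda$ with $\lambda^{-\sigma}$, $\lambda^{1-\sigma}$, $\xi_\lambda^{-(2^*-2)s}$, $\xi_\lambda^{-(2^*-q)s}$ — all checks out, and part (c) follows by term-by-term substitution as you say.
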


Let $w_\lambda(x)=\xi_\lambda^{\frac{N-2}{2}} v_\lambda(\xi_\lambda x)$ where the $v_\lambda$ is a ground state of $(Q_\lambda)$. Then by Lemma \ref{l34} we conclude that 
\begin{equation}\label{e39}
\|\nabla(w_\lambda-U_1)\|_2\to 0, \qquad \|w_\lambda-U_1\|_{2^*}\to 0  \qquad\text{as} \ \lambda\to 0.
\end{equation}

Note that the corresponding Nehari and Pohozaev's identities read as follows
$$
\int_{\mathbb R^N}|\nabla w_\lambda|^2+\lambda^\sigma\xi_\lambda^{(2^*-2)s}\int_{\mathbb R^N}|w_\lambda|^2=\int_{\mathbb R^N}|w_\lambda|^{2^*}+\lambda^\sigma\xi_\lambda^{(2^*-q)s}\int_{\mathbb R^N}|w_\lambda|^q,
$$
and 
$$
\frac{1}{2^*}\int_{\mathbb R^N}|\nabla w_\lambda|^2+\frac{1}{2}
\lambda^\sigma\xi_\lambda^{(2^*-2)s}\int_{\mathbb R^N}|w_\lambda|^2=\frac{1}{2^*}\int_{\mathbb R^N}|w_\lambda|^{2^*}+\frac{1}{q}\lambda^\sigma\xi_\lambda^{(2^*-q)s}\int_{\mathbb R^N}|w_\lambda|^q.
$$
We conclude that 
$$
\left(\frac{1}{2}-\frac{1}{2^*}\right)\lambda^\sigma\xi_\lambda^{(2^*-2)s}\int_{\mathbb R^N}|w_\lambda|^2=\left(\frac{1}{q}-\frac{1}{2^*}\right)\lambda^\sigma\xi_\lambda^{(2^*-q)s}\int_{\mathbb R^N}|w_\lambda|^q.
$$
Thus, we obtain
\begin{equation}\label{e310}
\xi_\lambda^{(q-2)s}\int_{\mathbb R^N}|w_\lambda|^2=\frac{2(2^*-q)}{q(2^*-2)}\int_{\mathbb R^N}|w_\lambda|^q.
\end{equation}
To control the norm $\|w_\lambda\|_2$, we note that  for any $\lambda>0$, $w_\lambda>0$ satisfies the linear inequality
\begin{equation}\label{e311}
-\Delta w_\lambda+\lambda^\sigma\xi_\lambda^{(2^*-2)s}w_\lambda=w_\lambda^{2^*-1}+\lambda^\sigma\xi_\lambda^{(2^*-q)s}w_\lambda^{q-1}>0,  \quad x\in \mathbb R^N.
\end{equation}

\begin{lemma}\label{l37}
There exists a constant $c>0$ such that 
\begin{equation}\label{e312}
w_\lambda(x)\ge c|x|^{-(N-2)}\exp({-\lambda^{\frac{\sigma}{2}}\xi_\lambda^{\frac{(2^*-2)s}{2}}}|x|),  \quad |x|\ge 1.
\end{equation}
\end{lemma}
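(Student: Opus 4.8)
Inequality \eqref{e311} says precisely that $w_\lambda>0$ is a classical supersolution of the linear Schr\"odinger operator $L_\lambda:=-\Delta+m_\lambda^2$ with positive mass $m_\lambda^2:=\lambda^\sigma\xi_\lambda^{(2^*-2)s}$; indeed $L_\lambda w_\lambda=w_\lambda^{2^*-1}+\lambda^\sigma\xi_\lambda^{(2^*-q)s}w_\lambda^{q-1}>0$ on $\mathbb R^N$. I would therefore get the pointwise lower bound by comparing $w_\lambda$ from below, on the exterior region $\{|x|\ge1\}$, with an explicit radial subsolution of $L_\lambda$ of Yukawa type, via the maximum principle on an exterior domain. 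Observe that the exponent in the assertion is exactly $m_\lambda=\sqrt{m_\lambda^2}=\lambda^{\sigma/2}\xi_\lambda^{(2^*-2)s/2}$.

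\textbf{The subsolution.} Take $\Phi_\lambda(x):=|x|^{-(N-2)}e^{-m_\lambda(|x|-1)}$, normalised so that $\Phi_\lambda\equiv1$ on $\{|x|=1\}$. Since $|x|^{-(N-2)}$ is harmonic on $\mathbb R^N\setminus\{0\}$, writing $\Phi_\lambda=|x|^{-(N-2)}h(|x|)$ with $h(r)=e^{m_\lambda}e^{-m_\lambda r}$ and using the radial identity $\Delta(|x|^{-(N-2)}h)=|x|^{-(N-2)}\big(h''-\tfrac{N-3}{|x|}h'\big)$, one computes
$$L_\lambda\Phi_\lambda=-\Delta\Phi_\lambda+m_\lambda^2\Phi_\lambda=-\frac{(N-3)\,m_\lambda}{|x|}\,\Phi_\lambda\le0 \qquad\text{on }\mathbb R^N\setminus\{0\},$$
because here $N\in\{3,4\}$, so that $N-3\ge0$. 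Thus $\Phi_\lambda$ is a subsolution of $L_\lambda$ away from the origin (for $N=3$ it is, up to the constant, the Yukawa fundamental solution).

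\textbf{Boundary data --- the main difficulty.} The comparison needs a lower bound $w_\lambda\ge c_*>0$ on $\{|x|=1\}$, \emph{uniform} for all small $\lambda$. This is the delicate point, since when $N=3,4$ the family $\{w_\lambda\}$ is \emph{not} bounded in $H^1(\mathbb R^N)$ (its $L^2$-norm blows up, as $U_1\notin L^2(\mathbb R^N)$ in these dimensions), so one cannot apply Strauss' radial lemma or standard elliptic estimates to $w_\lambda$ with $\lambda$-independent constants. Instead I would exploit that $u_\lambda$, hence $w_\lambda$, is radially non-increasing (Theorem \ref{t0}) together with $w_\lambda\to U_1$ in $L^{2^*}(\mathbb R^N)$ (Lemma \ref{l34}, see also \eqref{e39}). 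Radial monotonicity gives, with $w_\lambda(1)$ the constant value on $\{|x|=1\}$,
$$|B_2\setminus B_1|\,w_\lambda(1)^{2^*}\ \ge\ \int_{1\le|x|\le2}w_\lambda^{2^*}\,dx\ \longrightarrow\ \int_{1\le|x|\le2}U_1^{2^*}\,dx\ >\ 0,$$
whence $\liminf_{\lambda\to0}w_\lambda(1)>0$, and a fixed constant $c_*>0$ depending only on $N$ works for all small $\lambda$.

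\textbf{Comparison and conclusion.} Set $z:=c_*\Phi_\lambda-w_\lambda$ on $\{|x|\ge1\}$. By the previous steps $L_\lambda z=c_*L_\lambda\Phi_\lambda-L_\lambda w_\lambda<0$ in $\{|x|>1\}$ (strict, because the right-hand side of \eqref{e311} is strictly positive), $z\le0$ on $\{|x|=1\}$, and $z\le c_*\Phi_\lambda\to0$ as $|x|\to\infty$ since $w_\lambda>0$. If $z$ had a positive maximum, attained at an interior point $x_0$, then $\Delta z(x_0)\le0$ and $z(x_0)>0$ would give $L_\lambda z(x_0)\ge m_\lambda^2 z(x_0)>0$, contradicting $L_\lambda z<0$. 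Hence $z\le0$ on $\{|x|\ge1\}$, i.e.
$$w_\lambda(x)\ \ge\ c_*\,|x|^{-(N-2)}e^{-m_\lambda(|x|-1)}\ \ge\ c_*\,|x|^{-(N-2)}e^{-m_\lambda|x|} \qquad(|x|\ge1),$$
which is the claim with $m_\lambda=\lambda^{\sigma/2}\xi_\lambda^{(2^*-2)s/2}$. The remaining points are routine: the radial formula for $\Delta$, the standard exterior maximum principle (a classical subsolution of $L_\lambda$ that is $\le0$ on the boundary and has nonpositive limsup at infinity is $\le0$), and that $w_\lambda\in C^2$, inherited from $u_\lambda\in C^2(\mathbb R^N)$ via Theorem \ref{t0}.
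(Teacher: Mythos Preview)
Your argument is correct and follows the standard Yukawa--type subsolution comparison that the paper invokes by citing \cite[Lemma 4.8]{Moroz-1}: construct an explicit radial subsolution of $-\Delta+m_\lambda^2$ on $\{|x|\ge 1\}$, match it on the unit sphere, and apply the exterior maximum principle. Your treatment of the boundary datum is in fact the point where care is needed in the present setting---since for $N=3,4$ the family $\{w_\lambda\}$ is not bounded in $H^1$, Strauss' radial lemma is unavailable, and your use of radial monotonicity together with $L^{2^*}$--convergence from \eqref{e39} is the right adaptation.
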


\begin{proof} The same as \cite[Lemma 4.8]{Moroz-1}.  
\end{proof}

As consequences, we have the following two lemmas.

\begin{lemma}\label{l38}
If $N=3$, then $\|w_\lambda\|_2^2\gtrsim \lambda^{-\frac{\sigma}{2}}\xi_\lambda^{-\frac{(2^*-2)s}{2}}$.
\end{lemma}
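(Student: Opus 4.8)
The plan is to read the estimate straight off the pointwise lower bound of Lemma \ref{l37}. Abbreviate $\alpha_\lambda:=\lambda^{\sigma/2}\xi_\lambda^{(2^*-2)s/2}$, so that for $N=3$ (where $N-2=1$) the inequality \eqref{e312} becomes $w_\lambda(x)\ge c\,|x|^{-1}e^{-\alpha_\lambda|x|}$ for $|x|\ge 1$. Discarding the contribution of the unit ball and squaring,
\[
\|w_\lambda\|_2^2\ \ge\ \int_{|x|\ge 1}|w_\lambda|^2\ \ge\ c^2\int_{|x|\ge 1}|x|^{-2}e^{-2\alpha_\lambda|x|}\,dx .
\]

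First I would pass to polar coordinates in $\R^3$: the volume element produces a factor $r^{2}$ which exactly cancels the $r^{-2}$ weight, so the integral reduces to a one--dimensional exponential integral,
\[
\int_{|x|\ge 1}|x|^{-2}e^{-2\alpha_\lambda|x|}\,dx=4\pi\int_1^\infty e^{-2\alpha_\lambda r}\,dr=\frac{2\pi}{\alpha_\lambda}\,e^{-2\alpha_\lambda}.
\]
Next I would observe that $\alpha_\lambda\to 0$ as $\lambda\to 0$: indeed $\lambda\to 0$ and, by Lemma \ref{l34}, $\xi_\lambda\to 0$, hence $\lambda^\sigma\xi_\lambda^{(2^*-2)s}\to 0$. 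Therefore $e^{-2\alpha_\lambda}\ge\tfrac12$ for all sufficiently small $\lambda>0$, which yields
\[
\|w_\lambda\|_2^2\ \ge\ c^2\pi\,\alpha_\lambda^{-1}\ =\ c^2\pi\,\lambda^{-\sigma/2}\xi_\lambda^{-(2^*-2)s/2},
\]
which is precisely the claimed bound.

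There is essentially no analytic obstacle here once Lemma \ref{l37} is available: the computation is elementary, and the whole content is the borderline scaling in dimension three, where $r^{N-1}\cdot r^{-2(N-2)}=r^{0}$ makes the tail integral $\int_1^\infty e^{-2\alpha_\lambda r}\,dr$ diverge like $\alpha_\lambda^{-1}$ as the exponential weight degenerates. The one point that genuinely has to be checked is that this weight does degenerate, i.e.\ that $\alpha_\lambda\to 0$, and this is exactly where the information $\xi_\lambda\to 0$ coming from the concentration--compactness analysis in Lemma \ref{l34} enters.
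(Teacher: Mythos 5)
Your proof is correct and is exactly the argument the paper intends: Lemma \ref{l38} is stated as a direct consequence of the pointwise lower bound in Lemma \ref{l37}, obtained by integrating $c^2|x|^{-2}e^{-2\alpha_\lambda|x|}$ over $|x|\ge 1$ in $\R^3$, where the radial Jacobian cancels the weight and the tail integral scales like $\alpha_\lambda^{-1}$ since $\alpha_\lambda=\lambda^{\sigma/2}\xi_\lambda^{(2^*-2)s/2}\to 0$ (using $\xi_\lambda\to 0$ from Lemma \ref{l34}). Your computation and the justification of the degenerating exponential weight are both accurate, so nothing further is needed.
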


\begin{lemma}\label{l39}
If $N=4$, then $\|w_\lambda\|_2^2\gtrsim  - \ln(\lambda^{\sigma}\xi_\lambda^{(2^*-2)s})$.
\end{lemma}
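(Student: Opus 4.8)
The plan is to read off the estimate directly from the Agmon-type pointwise lower bound of Lemma \ref{l37}, specialised to $N=4$. In that case $s=1$ and $N-2=2$, so writing $a_\lambda:=\lambda^{\sigma/2}\xi_\lambda^{(2^*-2)s/2}$ (hence $a_\lambda^2=\lambda^\sigma\xi_\lambda^{(2^*-2)s}$), Lemma \ref{l37} gives $w_\lambda(x)\ge c|x|^{-2}e^{-a_\lambda|x|}$ for $|x|\ge 1$. Therefore
\[
\|w_\lambda\|_2^2\ \ge\ c^2\int_{|x|\ge 1}|x|^{-4}e^{-2a_\lambda|x|}\,dx\ =\ c^2\,|\mathbb S^{3}|\int_1^\infty r^{-4}e^{-2a_\lambda r}\,r^{3}\,dr\ =\ c^2\,|\mathbb S^{3}|\int_1^\infty \frac{e^{-2a_\lambda r}}{r}\,dr,
\]
so the whole matter reduces to bounding the one-dimensional integral $\int_1^\infty r^{-1}e^{-2a_\lambda r}\,dr$ (an exponential integral) from below.

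The key observation is that $a_\lambda\to 0$ as $\lambda\to 0$, since $\xi_\lambda\to 0$ by Lemma \ref{l34} and $\lambda\to 0$, while the exponents $\sigma/2$ and $(2^*-2)s/2$ are positive; in particular $\lambda^\sigma\xi_\lambda^{(2^*-2)s}\to 0$, so that $-\ln(\lambda^\sigma\xi_\lambda^{(2^*-2)s})\to+\infty$ and the asserted bound is genuinely nontrivial. To extract the logarithm without invoking the asymptotics of $E_1$, I would simply restrict the integral to $1\le r\le 1/a_\lambda$ (valid once $a_\lambda<1$), where $2a_\lambda r\le 2$ and hence $e^{-2a_\lambda r}\ge e^{-2}$, obtaining
\[
\int_1^\infty \frac{e^{-2a_\lambda r}}{r}\,dr\ \ge\ e^{-2}\int_1^{1/a_\lambda}\frac{dr}{r}\ =\ e^{-2}\ln\frac1{a_\lambda}\ =\ -\frac{e^{-2}}{2}\,\ln\!\big(\lambda^\sigma\xi_\lambda^{(2^*-2)s}\big).
\]
Combining the two displays yields $\|w_\lambda\|_2^2\gtrsim -\ln(\lambda^\sigma\xi_\lambda^{(2^*-2)s})$, which is the claim.

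I do not expect a serious obstacle: the argument is a routine integration of the exponential lower bound, and the only care needed is (i) checking $a_\lambda\to 0$ so that the right-hand side tends to $+\infty$, and (ii) the dimension-specific bookkeeping — it is exactly in $N=4$ that the radial Jacobian turns $r^{-2(N-2)}$ into the borderline weight $r^{-1}$ producing a logarithmic divergence, whereas for $N=3$ the analogous computation gives the power $a_\lambda^{-1}$ of Lemma \ref{l38} and for $N\ge 5$ the integral converges (consistent with $\|v_\lambda\|_2^2\sim 2^*-q$ there). The constant is harmless since $\gtrsim$ absorbs the fixed factor $c^2|\mathbb S^3|e^{-2}/2$ coming from Lemma \ref{l37}.
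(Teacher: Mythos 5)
Your proof is correct and follows exactly the route the paper intends: the paper treats Lemma \ref{l39} as an immediate consequence of the pointwise lower bound of Lemma \ref{l37}, and your computation (integrate $c^2|x|^{-4}e^{-2a_\lambda|x|}$ over $|x|\ge 1$ in $\R^4$, then cut the resulting integral of $e^{-2a_\lambda r}/r$ at $r=1/a_\lambda$ to extract $\ln(1/a_\lambda)=-\tfrac12\ln(\lambda^\sigma\xi_\lambda^{(2^*-2)s})$) is precisely that argument, with the needed observation $a_\lambda\to 0$ correctly justified via $\xi_\lambda\to 0$ from Lemma \ref{l34}. No gaps.
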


To prove our main result, the key point is to show the boundedness of $\|w_\lambda\|_q$. 

\begin{lemma}\label{l311}
If $N=3,4$ and  $\frac{N}{N-2}<r<2^*$, then $\|w_\lambda\|_r^r\sim 1$ as $\lambda\to 0$. Furthermore, $w_\lambda\to U_1$ in $L^r(\mathbb R^N)$ as $\lambda\to 0$. 
\end{lemma}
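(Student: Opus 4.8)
The plan is to obtain matching two--sided bounds for $\|w_\lambda\|_r^r$ and then to upgrade these to convergence in $L^r$. The lower bound is soft: by Lemma \ref{l34} (equivalently \eqref{e39}) we have $w_\lambda\to U_1$ in $D^1(\mathbb R^N)\cap L^{2^*}(\mathbb R^N)$, so along any sequence $\lambda_n\to 0$ a subsequence converges a.e.\ on $\mathbb R^N$, and since $\tfrac{N}{N-2}<r<2^*$ implies $U_1\in L^r(\mathbb R^N)$, Fatou's lemma yields $\liminf_{\lambda\to 0}\|w_\lambda\|_r^r\ge\|U_1\|_r^r>0$, i.e.\ $\|w_\lambda\|_r^r\gtrsim 1$. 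The substance of the lemma is therefore the \emph{upper} bound $\|w_\lambda\|_r^r\lesssim 1$, uniform for small $\lambda$, which I would deduce from a uniform pointwise decay estimate for $w_\lambda$.

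To this end, recall that $w_\lambda$ is radial and solves $(R_\lambda)$, and that $\|\nabla w_\lambda\|_2=\|\nabla u_\lambda\|_2$ is bounded uniformly in $\lambda$ (Lemma \ref{l24}, Lemma \ref{l36}(a)). The $D^1$--radial lemma (the homogeneous counterpart of the Strauss radial lemma already used in the proof of Lemma \ref{l34}) gives the crude bound $w_\lambda(x)\le C_1|x|^{-(N-2)/2}$ for all $x\ne 0$, with $C_1$ independent of $\lambda$; in particular $w_\lambda^{2^*-2}(x)\le C_1^{2^*-2}|x|^{-2}$ and $w_\lambda^{q-2}(x)\le C_1^{q-2}|x|^{-(N-2)(q-2)/2}$. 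Feeding this into $(R_\lambda)$ shows that $w_\lambda$ is a positive subsolution of
\[
-\Delta w=\Big(C_1^{2^*-2}|x|^{-2}+\lambda^\sigma\xi_\lambda^{(2^*-q)s}C_1^{q-2}|x|^{-(N-2)(q-2)/2}\Big)w .
\]
Running the maximum--principle/Kelvin--transform comparison exactly as in the proof of Lemma \ref{l34} (and as in \cite[Lemma 4.8]{Moroz-1}), one compares $w_\lambda$ on the expanding annulus $1\le|x|\le\Lambda_\lambda$, where $\Lambda_\lambda\sim(\lambda^\sigma\xi_\lambda^{(2^*-2)s})^{-1/2}\to\infty$ is the threshold beyond which the positive linear term $\lambda^\sigma\xi_\lambda^{(2^*-2)s}w_\lambda$ dominates the nonlinearities, with the supersolution $A|x|^{-(N-2)+\varepsilon_0}$, and obtains $w_\lambda(x)\le C|x|^{-(N-2)+\varepsilon_0}$ on $1\le|x|\le\Lambda_\lambda$ with $C,\varepsilon_0>0$ independent of $\lambda$ and $\varepsilon_0$ as small as we please; for $|x|\ge\Lambda_\lambda$ the same (indeed faster, exponential) decay holds, as in Lemma \ref{l37}. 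The delicate point — and where the bulk of the work lies — is that the $D^1$--radial lemma only produces the \emph{borderline} Hardy weight $|x|^{-2}$ in the potential, so the comparison cannot be run naively with a fixed constant; one must either first improve the crude decay via a Green's function representation combined with the smallness of $\|w_\lambda^{2^*-2}\|_{L^{N/2}(B_R^c)}$ (which tends to $\|U_1^{2^*-2}\|_{L^{N/2}(B_R^c)}$, arbitrarily small for $R$ large), or choose the exponent and coefficient of the supersolution so as to absorb the fixed constant $C_1^{2^*-2}$; the subcritical term is harmless on $1\le|x|\le\Lambda_\lambda$ because of the small prefactor $\lambda^\sigma\xi_\lambda^{(2^*-q)s}$.

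Granting the decay estimate, the two remaining claims follow readily. Splitting $\int_{\mathbb R^N}|w_\lambda|^r=\int_{B_1}|w_\lambda|^r+\int_{B_1^c}|w_\lambda|^r$, Hölder's inequality and the uniform $L^{2^*}$--bound give $\int_{B_1}|w_\lambda|^r\le|B_1|^{1-r/2^*}\|w_\lambda\|_{2^*}^r\lesssim 1$, while, choosing $\varepsilon_0$ so small that $r(N-2-\varepsilon_0)>N$ (possible since $r>\tfrac{N}{N-2}$), $\int_{B_1^c}|w_\lambda|^r\le C^r\int_1^\infty\rho^{-r(N-2-\varepsilon_0)+N-1}\,d\rho<\infty$ uniformly in $\lambda$; hence $\|w_\lambda\|_r^r\sim 1$. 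For the convergence, on $B_1$ we have $w_\lambda\to U_1$ in $L^{2^*}(B_1)$, hence in $L^r(B_1)$ since $|B_1|<\infty$ and $r<2^*$; on $B_1^c$ the uniform domination $|w_\lambda(x)|^r\le C^r|x|^{-r(N-2-\varepsilon_0)}\in L^1(B_1^c)$ together with a.e.\ convergence (along subsequences, which suffices for the whole family by a routine argument) lets the dominated convergence theorem give $w_\lambda\to U_1$ in $L^r(B_1^c)$; adding the two pieces, $w_\lambda\to U_1$ in $L^r(\mathbb R^N)$, which also re-confirms $\|w_\lambda\|_r^r\to\|U_1\|_r^r\sim 1$.
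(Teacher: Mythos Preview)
Your proposal is correct and follows the same overall strategy as the paper: a lower bound coming from the already established $L^{2^*}$ convergence \eqref{e39}, a uniform pointwise decay estimate for $w_\lambda$ giving the upper bound, and dominated convergence for the $L^r$ limit. The paper's proof is terser than yours --- it obtains the lower bound via the embedding $L^{2^*}(B_1)\hookrightarrow L^r(B_1)$ (so $w_\lambda\to U_1$ in $L^r(B_1)$ directly) rather than Fatou, and it imports the \emph{sharp} decay $w_\lambda(x)\le C(1+|x|)^{-(N-2)}$ from \cite[Proposition 3.1]{Akahori-2} rather than deriving a near-sharp bound $|x|^{-(N-2)+\varepsilon_0}$ by hand; but the skeleton is identical and your weaker decay suffices since $r>\tfrac{N}{N-2}$.

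One caveat on your sketch of the decay: of the two workarounds you offer for the borderline Hardy weight $C_1^{2^*-2}|x|^{-2}$, the first (exploit that $\|w_\lambda^{2^*-2}\|_{L^{N/2}(B_R^c)}\to\|U_1^{2^*-2}\|_{L^{N/2}(B_R^c)}$ is small for $R$ large, then bootstrap) is the one that actually works and is in essence what \cite{Akahori-2} does. The second (``choose the exponent and coefficient of the supersolution so as to absorb $C_1^{2^*-2}$'') does not go through in general: for the supersolution $|x|^{-(N-2)+\varepsilon_0}$ one has $-\Delta|x|^{-(N-2)+\varepsilon_0}=\varepsilon_0(N-2-\varepsilon_0)|x|^{-(N-\varepsilon_0)}$, and $\varepsilon_0(N-2-\varepsilon_0)\le(N-2)^2/4$, so there is no reason this dominates $C_1^{2^*-2}$, let alone while keeping $\varepsilon_0$ small. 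Stick with the first option.
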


\begin{proof}
By \eqref{e39},  we have $w_\lambda\to U_1$ in $L^{2^*}(\mathbb R^N)$. Then, as in \cite[Lemma 4.6]{Moroz-1}, using the embeddings $L^{2^*}(B_1)\hookrightarrow L^r(B_1)$ we prove that
$\liminf_{\lambda\to 0}\|w_\lambda\|_r^r>0$.  
 
 On the other hand,  
arguing as in \cite[Propositon 3.1]{Akahori-2}, we show that there exists a constant $C>0$ such that for all small $\lambda>0$, 
\begin{equation}\label{e314}
w_\lambda(x)\le \frac{C}{(1+|x|)^{N-2}}, \qquad \forall x\in \mathbb R^N,
\end{equation}
which together with the fact that $r>\frac{N}{N-2}$ implies that $w_\lambda$ is bounded in $L^r(\mathbb R^N)$  uniformly for small $\lambda>0$,  and 
 by the dominated convergence theorem $w_\lambda\to U_1$ in $L^r(\mathbb R^N)$ as $\lambda\to 0$. 
\end{proof}

\begin{proof}[Proof of Theorem \ref{t1}]   We only give the proof for $N=3, 4$. The case $N\ge 5$ is easier. 
We first note that for a result similar to Lemma \ref{l24} holds for $w_\lambda$ and $\tilde J_\lambda$. 
By \eqref{e310}, \eqref{e25} and Lemma \ref{l36}, we also have  $\tau(w_\lambda)=\tau(v_\lambda)$. 
Therefore, by \eqref{e310} we obtain
\begin{align}\label{e318}
m_0&\le\sup_{t\ge 0} \tilde J_\lambda((w_\lambda)_t)+\lambda^\sigma\tau(w_\lambda)^{\frac{N}{2}}\left\{\frac{1}{q}\xi_\lambda^{(2^*-q)s}\int_{\mathbb R^N}|w_\lambda|^q-\frac{1}{2}\xi_\lambda^{(2^*-2)s}\int_{\mathbb R^N}|w_\lambda|^2\right\}\\
&=m_\lambda+\lambda^\sigma\tau(v_\lambda)^{\frac{N}{2}}\frac{q-2}{q(2^*-2)}\xi_\lambda^{(2^*-q)s}\int_{\mathbb R^N}|w_\lambda|^q,\nonumber
\end{align}
which implies that
$$
\xi_\lambda^{(2^*-q)s}\int_{\mathbb R^N}|w_\lambda|^q\ge \lambda^{-\sigma}\frac{q(2^*-2)}{(q-2)\tau(v_\lambda)^{\frac{N}{2}}}\delta_\lambda,
$$
where $\delta_\lambda=m_0-m_\lambda$. Hence, by Corollary \ref{c29}, we obtain
\begin{equation}\label{e319}
\xi_\lambda^{(2^*-q)s}\int_{\mathbb R^N}|w_\lambda|^q\gtrsim \lambda^{-\sigma}\delta_\lambda\gtrsim 
\left\{\begin{array}{ll} 
 (\ln\frac{1}{\lambda})^{-\frac{4-q}{q-2}}  &\text{if}\ N=4,\smallskip\\
 \lambda^{\frac{2(6-q)}{(q-2)(q-4)}}   &\text{if}\  N=3.
 \end{array}\right.
\end{equation}  
Therefore, by Lemma \ref{l311}, we have 
\begin{equation}\label{e320}
\xi_\lambda\gtrsim  \left\{\begin{array}{ll} 
 (\ln\frac{1}{\lambda})^{-\frac{1}{q-2}}&\text{if}\ N=4,\\
 \lambda^{\frac{4}{(q-2)(q-4)}}&\text{if}\ N=3.
 \end{array}\right.
\end{equation}
On the other hand, if  $ N=3$, then by  \eqref{e310}, Lemma \ref{l38} and Lemma \ref{l311}, we have 
$$
\xi_\lambda^{(q-2)s}\lesssim \frac{1}{\|w_\lambda\|_2^2}\lesssim\lambda^{\frac{\sigma}{2}}\xi_\lambda^{\frac{(2^*-2)s}{2}}.
$$
Then 
$$
\xi_\lambda^{(q-4)s}\lesssim \lambda^{\frac{\sigma}{2}}.
$$
Hence, observing that $s=\frac{N-2}{2}=\frac{1}{2}$, $\sigma=\frac{2^*-2}{q-2}=\frac{4}{q-2}$,  for $q\in (4,6)$ we obtain 
\begin{equation}\label{e321}
\xi_\lambda\lesssim \lambda^{\frac{4}{(q-2)(q-4)}}.
\end{equation}
If $N=4$, then by \eqref{e310}, Lemma \ref{l39} and Lemma \ref{l311},  we have 
$$
\xi_\lambda^{(q-2)s}\lesssim \frac{1}{\|w_\lambda\|_2^2}\lesssim \frac{1}{-\ln(\lambda^\sigma\xi_\lambda^{(2^*-2)s})}.
$$
Note that 
$$
-\ln(\lambda^\sigma\xi_\lambda^{(2^*-2)s})=\sigma\ln\frac{1}{\lambda}+(2^*-2)s\ln\frac{1}{\xi_\lambda}\ge \sigma\ln\frac{1}{\lambda},
$$
it follows that 
$$
\xi_\lambda^{(q-2)s}\lesssim  \frac{1}{\|w_\lambda\|_2^2}\lesssim \Big(\ln\frac{1}{\lambda}\Big)^{-1}.
$$
Since $s=\frac{N-2}{2}=1$,  we then obtain 
\begin{equation}\label{e322}
\xi_\lambda\lesssim   \Big(\ln\frac{1}{\lambda}\Big)^{-\frac{1}{q-2}}.
\end{equation}
Thus, it follows from \eqref{e318}, \eqref{e321}, \eqref{e322} and Lemma \ref{l311} that 
$$
\delta_\lambda=m_0-m_\lambda\lesssim \lambda^\sigma\xi_\lambda^{(2^*-q)s}\lesssim 
\left\{\begin{array}{ll} \lambda^{\frac{2}{q-2}}(\ln\frac{1}{\lambda})^{-\frac{4-q}{q-2}}&\text{if}\ N=4,\\
 \lambda^{\frac{2}{q-4}}&\text{if}\ N=3,
 \end{array}\right.
$$
which together with Corollary \ref{c29} implies that 
$$
\|\nabla U_1\|_2^2-\|\nabla w_\lambda\|_2^2=N\delta_\lambda\sim 
\left\{\begin{array}{ll} \lambda^{\frac{2}{q-2}}(\ln\frac{1}{\lambda})^{-\frac{4-q}{q-2}} &\text{if}\ N=4,\\
 \lambda^{\frac{2}{q-4}}&\text{if}\  N=3.
 \end{array}\right.
$$
Finally, by \eqref{e310}, Lemma \ref{l38} and Lemma \ref{l39}, we obtain
$$
\|w_\lambda\|_2^2\sim 
\left\{\begin{array}{ll}
\ln\frac{1}{\lambda}&\text{if}\ N=4,\\
\lambda^{-\frac{2}{q-4}}&\text{if}\  N=3.
\end{array}\right.
$$
Statements on $u_\lambda$ follow from the corresponding results on $v_\lambda$ and $w_\lambda$. This completes the proof of Theorem \ref{t1}.
\end{proof}

\bigskip
{\small
\noindent {\bf Acknowledgements.} 
The authors are grateful to the anonymous referee for their multiple helpful suggestions.

Part of this research was carried out while S.M. was visiting Swansea University. S.M. thanks the Department of Mathematics for its hospitality.
S.M. was supported by National Natural Science Foundation of China
(Grant Nos.11571187, 11771182)

\end{document}